\documentclass[11pt]{article}

\usepackage[utf8]{inputenc}
\usepackage{blindtext}
\usepackage{mathtools}
\usepackage{amsmath}
\usepackage{geometry}
\usepackage{amsfonts}
\usepackage{dsfont}
\usepackage{filecontents}
\usepackage[justification=centering]{caption}
\usepackage{diagbox}
\usepackage{graphicx}
\usepackage{yhmath}
\usepackage{titlesec}
\usepackage{tocloft}
\usepackage{etoolbox}
\usepackage{amsthm}
\usepackage{amssymb}
\usepackage{subcaption}
\usepackage[scr=esstix,cal=boondox]{mathalfa} 
\usepackage{aligned-overset}
\usepackage[linktoc=all]{hyperref}
\hypersetup{
    colorlinks=true,
    linkcolor=blue,
    filecolor=black,      
    urlcolor=black,
    citecolor=blue,
    }
\usepackage{letltxmacro}
\usepackage[section,sort=standard]{glossaries}
\usepackage{glossary-mcols}
\usepackage{enumitem}


\makeatletter
  \newglossarystyle{mymcolalttree}{%
    \setglossarystyle{alttree}%
      {%
        \begin{multicols}{2}%
        \def\@gls@prevlevel{-1}%
        \mbox{}\par
      }%
      {\par\end{multicols}}%
      \columnsep 3em
  }%
\makeatother

\makeglossaries


\newcommand*{\newconstant}[5][]{%
  \newglossaryentry{#2}{name={$#3$},text={#3},description={\hspace{5em}#4\vspace{0.2em}},#1,sort={#5}}%
}

\urlstyle{same}
%
\geometry{top=2.5cm, bottom=2.5cm, left=2cm, right=2cm}

\setlength{\parindent}{0pt}



\titlespacing*{\section}{0em}{5em}{2em}
\titlespacing*{\subsection}{0em}{4em}{1em}
\titlespacing*{\subsubsection}{0em}{2em}{1em}


\renewcommand*{\thesection}{\arabic{section}.}
\renewcommand*{\thesubsection}{\alph{subsection}.}
\renewcommand*{\thesubsubsection}{\alph{subsubsection}.}

\titleformat{\section}
  {\normalfont\Large\bfseries}{\thesection}{0.15em}{}
\titleformat{\subsection}
  {\normalfont\Large\bfseries}{\thesubsection}{0.3em}{}
\titleformat{\subsubsection}
  {\normalfont\Large\bfseries}{\thesubsubsection}{0.45em}{}

\makeatletter
\renewcommand{\numberline}[1]{%
  \@cftbsnum #1\@cftasnum~\@cftasnumb%
}
\makeatother

%
%
%


\newtheorem{thm}{\hspace{1em} Theorem}[section]
\newtheorem{thmtitle}{\hspace{1em} Theorem}[section]
\newtheorem*{thm*}{\hspace{1em} Theorem}
\newtheorem*{conj*}{\hspace{1em} Viana's conjecture}
\newtheorem{definition}[thm]{\hspace{1em} Definition}
\newtheorem{lem}[thm]{\hspace{1em} Lemma}
\newtheorem{rmq}[thm]{\hspace{1em} Remark}
\newtheorem*{lem*}{\hspace{1em} Lemma}
\newtheorem*{rmq*}{\hspace{1em} \textit{Remark}}

\newtheorem{cortitle}[thmtitle]{\hspace{1em} Corollary}
\newtheorem{prop}[thm]{\hspace{1em} Proposition}
\newtheorem{proptitle}[thmtitle]{\hspace{1em} Proposition}

\newtheorem*{propriete*}{\hspace{1em} Property}

\makeatletter
\def\thm@space@setup{%
  \thm@preskip=1.5em \thm@postskip=2em
}
\makeatother


\let\oldautoref\autoref
\DeclareRobustCommand{\autoref}[1]{{\let\nobreakspace\space\oldautoref{#1}}}

\newcommand{\bbR}{{\mathbb{R}}}

\newcommand{\bbZ}{{\mathbb{Z}}}

\newcommand{\bbT}{{\mathbb{T}}}


\newconstant{Chi}{\chi(x)}{Lyapunov exponent at $x$}{11}
\newconstant{Basin}{\mathcal{B}(\mu)}{Basin of $\mu$}{5}
\newconstant{Rf}{R(f)}{$\lim_n \frac{1}{n} \log || (f^n)' ||_{\infty}$}{53}
\newconstant{Crit}{\mathcal{C}}{Set of critical points of $f$}{8}
\newconstant{Z_plus}{\bbZ^+}{$\{ 1 ; 2 ; ... \}$}{64}
\newconstant{Critn}{\mathcal{C}_n}{Set of critical points of $f^n$}{9}
\newconstant{Critinf}{\mathcal{C}_{\infty}}{Union of $\mathcal{C}_n$'s}{10}
\newconstant{Kng}{k_{n,g}(z)}{$\lfloor \log^+ |g'(g^{n-1}(z))|$}{33}
\newconstant{Kng_prime}{k_{n,g}'(z)}{$\lfloor \log^- |g'(g^{n-1}(z))|$}{34}
\newconstant{Zz_plus}{\bbZ_0^+}{$\{ 0 ; 1 ; 2 ; ... \}$}{65}
\newconstant{Hg}{\mathcal{H}_g(k,k')}{$\{ x \mid k_{i,g}^{(')}(x) = k_i^{(')}, i \in [\![ 1 ; n ]\!] \}$}{29}
\newconstant{Ep}{E_p(x)}{Geometric times of $x$ for $f^p$}{21}
\newconstant{D_upp}{\overline{d}(E)}{$\limsup_n d_n(E)$}{15}
\newconstant{D_part}{d_n(E)}{$\frac{1}{n} \# (E \cap [\![ 1 ; n ]\!])$}{13}
\newconstant{D_along}{d^{\mathcal{n}}(E)}{$\lim_{n \in \mathcal{n}} d_n(E)$}{14}
\newconstant{EnM}{E_n^M}{Defined as $\bigcup\limits_{\substack{n > k,l \in E\\
|k-l| \leq M}} [ \! [ k ; l [ \! [ \: \subset \bbZ_0^+$}{22}
\newconstant{EnMm}{E_n^{M,m}}{Subset of $E_n^M$}{23}
\newconstant{E_front}{\partial E}{Border of $E$, i.e. : $E \Delta (E+1)$}{16}
\newconstant{EnMmx}{E_n^{M,m}(x)}{$(E_p(x))_n^{M,m}$}{24}
\newconstant{A}{A}{$\{ \chi > R(f) / r \}$}{1}
\newconstant{B}{b}{$b > R(f)/r$ s.t. $Leb(\chi > b) > 0$}{3}
\newconstant{P}{p}{$p$ s.t. $E_p(x)$ has positive density}{45}
\newconstant{Beta}{\beta}{value such that $\overline{d}(E_p(x)) > \beta$}{6}
\newconstant{Eps}{\varepsilon}{Yomdin's scale for $f^p$}{18}
\newconstant{Sigma}{\sigma}{Reparametrization intersecting $A$}{57}
\newconstant{Ex}{E(x)}{$E_p(x)$ with $p$ large enough}{20}
\newconstant{G}{g}{$g = f^p$ with $p$ large enough}{27}
\newconstant{N}{\mathcal{n}}{Sequence of integers along which we
\hspace*{5em}have nice convergence properties}{40}
\newconstant{A_n}{A_n}{Subset of $A$ where $E_p(x)$ has\\
\hspace*{5em}uniformly bounded density}{2}
\newconstant{J}{\mathcal{J}}{Partition into monotone branches of
\hspace*{5em}$g$ of the form $[x  ; y[$}{32}
\newconstant{EnMm_ronde}{\mathcal{E}_n^{M,m}}{Partition into values of $E_n^{M,m}(x)$}{25}

\newconstant{MunM}{\mu_n^{M}}{$\mu_n^{M,m}$ for $m=1$}{38}
\newconstant{NunM}{\nu_n^{M}}{$(\beta_n^M / \beta^{\infty}) \times \mu_n^M$}{42}

\newconstant{MunMm}{\mu_n^{M,m}}{Empirical probabilty measure}{37}
\newconstant{NunMm}{\nu_n^{M,m}}{$(\beta_n^{M,m} / \beta^{\infty})  \times \mu_n^{M,m}$}{41}
\newconstant{MuM}{\mu^{M,m}}{$\lim_{n \in \mathcal{n}} \mu_n^M$}{39}
\newconstant{NuM}{\nu^{M,m}}{$\lim_{n \in \mathcal{n}} \nu_n^M$}{43}
\newconstant{Mu}{\mu}{$\mu = \lim_M \mu^M = \lim_M \nu^M$}{40}
\newconstant{Phig}{\phi_g}{$\log |g'|$}{48}
\newconstant{Partition_mu}{\mathcal{P}_{\mu}}{$\{ P \in \mathcal{P} \mid \mu(P) > 0 \}$}{46}
\newconstant{RF}{\mathcal{R}^F}{$\bigvee_{i \in F} T^{-i} \mathcal{R}$}{54}
\newconstant{Mu_E}{\mu_E}{Defined as $\frac{Leb_{A_n}( \; \cdot \; \cap E)}{Leb_{A_n}(E)}$}{36}
\newconstant{Chi_gmu}{\chi_g(\mu)}{$\int \chi_g \: d\mu$}{12}
\newconstant{Iqk}{I_{q,k}}{$]k/q ; k+1/q] +a$ for some fixed $a$}{31}
\newconstant{Qqk}{Q_{q,k}}{Defined by $(\log |g'|)^{-1}(I_{q,k})$}{51}
\newconstant{Qq}{\mathcal{Q}_q}{Partition given by $Q_{q,k}$'s}{50}
\newconstant{Pq}{\mathcal{P}_q}{$\mathcal{Q}_q \vee \mathcal{J}$}{47}
\newconstant{PhigE}{\phi_g^E}{$\sum_{k \in E} \phi \circ g^k$}{49}
\newconstant{E}{E}{Some fixed $E_n^M(x)$ of the form
\hspace*{5em}$E=\bigcup\limits_{j \in [\![ 1 ; m ] \! ]} [\![ a_j ; b_j [\![$}{19}
\newconstant{R}{R}{$\mathcal{J}^E_x \cap \mathcal{Q}_{q,x}^{E} \cap E \cap A_n \subset I$}{52}
\newconstant{Boules}{\mathcal{B}}{Partition of $I$ into small balls}{4}
\newconstant{Partition_m}{\mathcal{R}^m}{$\bigvee_{i \in [\![ 0 ; m [\![} T^{-i} \mathcal{R}$}{56}
\newconstant{Vaj}{\mathcal{V}_{a_{j+1}}}{$\mathcal{B}^{\{a_{j+1} - b_j \}} \vee \mathcal{J}^{a_{j+1}-b_j}$}{63}
\newconstant{V}{\mathcal{V}}{$\bigvee\limits_{j=0}^{m-1} g^{-b_j}\mathcal{V}_{a_{j+1}}$}{62}
\newconstant{HACIP}{HACIP}{Hyperbolic Absolutely Continuous
\hspace*{5em}Invariant Probabilty}{28}
\newconstant{Nu}{\nu}{$\nu = \frac{1}{p}\sum\limits_{k=0}^{p-1} f^k_* \mu$}{44}
\newconstant{BetanMm}{\beta_n^{M,m}}{$\int d_n(E^{M,m}(x)) d Leb_{A_n}(x)$}{7}
\newconstant{iter}{f^k}{$f$ iterated $k$ times}{26}
\newconstant{derivordre}{d^k f}{$k$-th order derivative of $f$}{17}
\newconstant{part_entiere}{\lfloor x \rfloor}{integer part of $x$}{30}
\newconstant{Leb_norm}{Leb_J}{$Leb( \cdot \cap J) / Leb(J)$}{35}
\newconstant{tree_n}{\mathcal{T}_n}{$n$-th level of the tree of
\hspace*{5em}reparametrizations}{58}
\newconstant{tree_n_geom}{\overline{\mathcal{T}_n}}{Reparametrizations of $\mathcal{T}_n$ that see
\hspace*{5em}the expansion of $f$}{59}
\newconstant{theta_n}{\theta_{i^n}}{Reprametrization at level $n$}{60}
\newconstant{theta_nk}{\theta_{i^n_k}}{Child of $\theta_{i^n}$ at level $k$}{61}

\counterwithin{equation}{section}

%
%
\title{Hyperbolic absolutely continuous invariant measures for $\mathcal{C}^r$ one-dimensional maps}
\date{}
\author{Alexandre Delplanque\\\\ \textit{LPSM, Sorbonne Universit\'e, Paris, France}}

\begin{document}

\maketitle

\textsc{Abstract:}
For $r > 1$, we show, using the Ledrappier-Young entropy characterization of SRB measures for non-invertible maps, that if a $\mathcal{C}^r$ map $f$ of the interval or the circle has its Lyapunov exponent greater than $\frac{1}{r} \log || f' ||_{\infty}$ on a set $E$ of positive Lebesgue measure, then it admits hyperbolic ergodic invariant measures that are absolutely continuous with respect to the Lebesgue measure.
We also show that the basins of these measures cover $E$ Lebesgue-almost everywhere.

\vspace{3em}
\begin{center}
\textsc{\Large Contents}\vspace{-4em}
\end{center}

\renewcommand*\contentsname{}
\tableofcontents


\section{Introduction}
\label{sec:intro}






In this work, we study the long-term behavior of discrete-time dynamical systems.
More precisely, we consider maps $f : X \to X$ where $X$ is the phase space,
and we define the \textit{orbit} of a point $x$ in $X$ as the sequence obtained by iterating $f$ starting from $x$.
We are interested in the asymptotic distribution of these orbits, for example by identifying attractors, their structure, and the set of points that are attracted to them.
Formally, we consider the notion of empirical measure: for a positive integer $n$ and a point $x$ in $X$, we define $\mu_n^x := \frac{1}{n} \sum\limits_{k=0}^{n-1} \delta_{f^k(x)}$, where $\delta_{f^k(x)}$ is the Dirac at $f^k(x)$ and $\gls*{iter}$ is the $k$-th iterate of $f$.
Therefore, the asymptotic behavior of the system can be understood by studying the limits of the sequence $(\mu_n^x)_{n}$ as $n$ goes to infinity.
By duality, if $\mu$ is an $f$-invariant borelian probability measure on $X$, we define the \textit{basin} of $\mu$, denoted by $\gls*{Basin}$, as the set of points $x$ such that $\left ( \mu_n^x \right )_n$ converges to $\mu$ for the weak-$*$ topology.
When $X$ is a smooth Riemannian manifold, one can define a Lebesgue measure on $X$ and search for measures whose basin has positive Lebesgue measure. They are called \textit{physical measures} and describe the asymptotic dynamics of a visible part of the space. As a consequence of Birkhoff's ergodic theorem, $f$-invariant ergodic probability measures that are absolutely continuous with respect to the Lebesgue measure are physical measures.
Our goal here is to study, for smooth one-dimensional dynamics, the existence of absolutely continuous probability measures of positive entropy and to understand their basins.\\

For one-dimensional dynamics, the existence of such measures has been studied since the 1970s, 
by Jakobson for example, for the quadratic family \cite{Jakobson1980} and for near-quadratic families \cite{Jakobson1981}, and by Collet and Eckmann \cite{ColletEckmann} for more general unimodal maps.
As for multimodal maps, some hyperbolicity assumption was usually made, such as in the results of Keller \cite{Keller_1990} and Ledrappier \cite{Ledrappier_1981}, which we will discuss later in the introduction.
Regarding higher-dimensional dynamics, the notion of SRB measures generalizes what absolutely continuous probability measures represent for one-dimensional systems. These measures first appeared in the works of Sinai \cite{Sinai_1972}, Ruelle \cite{Ruelle_1978} \cite{Ruelle_1976} and Bowen \cite{Bowen_1975} who showed their existence for uniformly hyperbolic diffeomorphisms.
The aim has then been to weaken the hyperbolicity assumption, both in dimension one and in higher dimension.
To do so, we use the notion of Lyapunov exponent, which we only define when $X=I$ is one-dimensional: for $x \in I$, the \textit{Lyapunov exponent} of $f$ at $x$ is 
$$
\gls*{Chi} = \limsup\limits_{n \to +\infty} \frac{1}{n} \log |(f^n)'(x)|
$$
Therefore, weakening the hyperbolicity assumption is well illustrated by Viana's conjecture \cite{Viana1998DynamicsAP}:

\begin{conj*}
If a smooth map has only non-zero Lyapunov exponents at Lebesgue almost every point, then it admits some SRB measure.
\end{conj*}

We present some previous results in this direction. For one-dimensional dynamics, and priorly to the conjecture, Keller \cite{Keller_1990} proved the existence of absolutely continuous measures for multimodal maps with negative Schwarzian derivative and with positive Lyapunov exponent on a set of positive Lebesgue measure.
Another similar result is that of Ledrappier \cite{Ledrappier_1981} where he showed that such measures exist for $\mathcal{C}^2$ maps satisfying the following conditions:\\[-2em]
\begin{itemize}
\item[i)] Non-degenerate critical points: there are finitely many critical points, all of finite multiplicity.\\[-2em]
\item[ii)] Hyperbolicity: on a set of positive Lebesgue measure, the Lyapunov exponent is positive.\\[-2em]
\item[iii)] Regularity assumption on the Lyapunov exponents: on this set of positive Lebesgue measure, the $\limsup$ defining the exponent is a limit and the orbits' distributions converge to an ergodic measure.\\[-2em]
\end{itemize}
The proof of Ledrappier uses an entropy characterization of absolutely continuous measures: among the ergodic measures of positive entropy, the absolutely continuous ones are exactly those who satisfy the entropy formula $h(\mu) = \int \log |f'| d \mu$.
As for higher-dimensional dynamics, the existence of SRB measures has been shown under similar though weaker conditions by Alves, Bonatti and Viana \cite{Alves2000}.
Moreover, their proof relies on a geometric study of the probability density with respect to the Lebesgue measure of the empirical measures rather than on the Ledrappier-Young entropy characterization \cite{Ledrappier_Young_1985}.\\

In this work we deal with interval or circle maps and we only assume some hyberbolicity and regularity of the map.
Hence, the dynamics is free to display flat critical points and infinitely many monotones branches.\\

We now introduce the required definitions and notations to state our result.
Let $I$ be the interval $[0 ; 1]$ or the circle $\bbT^1$ and $f : I \to I$ be a $\mathcal{C}^r$ map where $r > 1$. Here, $r$ does not need to be an integer, so $f$ being a $\mathcal{C}^r$ map means that it is $\mathcal{C}^{\lfloor r \rfloor}$, where $\lfloor r \rfloor$ is the largest integer smaller or equal to $r$, and that $d^{\lfloor r \rfloor} f$, the $\lfloor r \rfloor$-th order derivative of $f$, is Hölderian with exponent $r - \lfloor r \rfloor$, and we denote the Hölder constant by $|| d^r f ||_{\infty}$.
We also define
$\gls*{Rf} \overset{\text{def}}{=} \lim\limits_{n \to +\infty} \frac{1}{n} \log^+ ||(f^n)'||_{\infty} = || \chi^+ ||_{\infty} \leq \log || f' ||_{\infty}$ where $|| \cdot ||_{\infty}$ is the essential supremum norm.
Lastly, we say that an $f$-invariant probabilty $\mu$ is \textit{hyperbolic} if, for $\mu$-almost every $x$, we have $\chi(x) > 0$.

\begin{thmtitle}
\label{thm:thm}
Let $f : I \to I$ be a $\mathcal{C}^r$ map with $r > 1$.
There are countably many hyperbolic ergodic $f$-invariant measures $(\mu_i)$ that are absolutely continuous with respect to the Lebesgue measure and such that:
\begin{itemize}
\item[•] Lebesgue-almost every $x \in I$ such that $\chi(x) > \frac{R(f)}{r}$ is in some $\mathcal{B}(\mu_i)$ and $\chi(x) = \chi(\mu_i)$,\\[-1em]
\item[•] for any $\delta > 0$, the set $\left \{ \chi > \frac{R(f)}{r} + \delta \right \}$ is covered by finitely many of these basins, up to a set of zero Lebesgue measure.
\end{itemize}
In particular, for any $\delta > 0$, there are finitely many ergodic absolutely continuous measures with entropy larger than $\frac{R(f)}{r} + \delta$.
\end{thmtitle}

For smooth interval maps, we get the following corollary:

\begin{cortitle}
\label{cor:title}
Let $f : I \to I$ be a $\mathcal{C}^{\infty}$ map.
Then $f$ admits an absolutely continuous hyperbolic measure if and only if $Leb(\chi > 0) > 0$.
\end{cortitle}

Furthermore, regarding the finiteness statement in Theorem \autoref{thm:thm}, we explicit a bound on the number of measures. 
Such bounds are not known for surface diffeomorphisms, where Buzzi, Crovisier and Sarig's methods \cite{BCS_mme} only provide finiteness of SRB measures. Moreover, in the case where $f$ is analytic, the bound we obtain has a simple expression (see Remark \autoref{rmq:analytic}).
For the following statement, we introduce an additional notation: if $f$ is a $\mathcal{C}^r$ map, then $|| f' ||_{r-1}$ denotes $\max\limits_{k \in [\![ 1 ; \lfloor r \rfloor ]\!] \cup \{ r \}} || d^k f ||_{\infty}$ where $\gls*{derivordre}$ is the $k$-th order derivative of $f$.

\begin{proptitle}
\label{prop:title}
Let $f : I \to I$ be a $\mathcal{C}^r$ map with $r > 1$.
There exists a constant $C_r$ such that, for any $\delta > 0$, the number of hyperbolic ergodic $f$-invariant absolutely continuous measures whose basin intersects $\left \{ \chi > \frac{\log || f' ||_{\infty}}{r} + \delta \right \}$ with positive Lebesgue measure is less than
$
\left ( \frac{\log || f' ||_{\infty}}{\delta}  \right )^{\left ( C_r \log || f' ||_{r-1} \right ) / \delta}
$.
\end{proptitle}

We mention that if we also assume $f$ to be transitive in the previous statements, then there is at most one hyperbolic absolutely continuous measure. In particular, whenever such a measure exists, it is unique.
We further explain this fact in Remark \autoref{rmq:trans}.\\

We also mention that the bound $R(f) /r$ in Theorem \autoref{thm:thm} is sharp.
In Appendix A of \cite{BurguetCMP}, there is, for any $r \in \bbZ^+$, an example of a $\mathcal{C}^r$ map $f$ of the interval for which there exists some set $E \subset I$ satisfying 
\begin{itemize}
\item[-] $Leb(E) > 0$,
\item[-] for $x \in E$, $\chi(x) = R(f) / r$,
\item[-] $E$ is in the basin of the Dirac measure at some fixed point of $f$.
\end{itemize}
Hence, for such an $f$, if $a < R(f) / r$, then the set $\{ \chi > a \}$ is of positive Lebesgue measure but is not covered by basins of absolutely continuous measures.\\

Let us now explain our strategy to prove Theorem \autoref{thm:thm} and present the organization of the paper.
The proof relies on a reparametrization lemma, stated in section \hyperref[sec:RL]{\textbf{2.}}, which allows us to precisely control the local dynamics.
Such lemmas were introduced by Yomdin \cite{Yomdin1987VolumeGA} to prove Shub's entropy conjecture for $\mathcal{C}^{\infty}$ systems.
The reparametrization lemma that we use is an adaptation of Burguet's one from \cite{MR4701884}, which he used to prove the following result, regarding SRB measures for smooth surface diffeomorphisms:

\begin{thm*}[Burguet, \cite{MR4701884}]
Let $f: M \to M$ be a $\mathcal{C}^r$ surface diffeomorphism, where $\bbR \ni r > 1$.
There are countably many ergodic SRB measures $(\mu_i)_{i \in I}$ with $\Lambda := \{ \int \chi d \mu_i, i \in I \} \subset \left ] \frac{R(f)}{r}, + \infty \right [$, such that we have:
\begin{itemize}
\item[•] $\left \{ \chi > \frac{R(f)}{r} \right \} = \{ \chi \in \Lambda \}$ Lebesgue-almost everywhere
\item[•] For all $\lambda \in \Lambda$, we have Lebesgue-almost everywhere $\{ \chi = \lambda \} \subset \bigcup_{i, \chi(\mu_i) = \lambda} \mathcal{B}(\mu_i)$
\end{itemize}
\end{thm*}

For $\mathcal{C}^{\infty}$ surface diffeomorphisms, part of this result has also been obtained by Buzzi, Crovisier and Sarig \cite{Buzzi_Crovisier_Sarig_2022}.\\

Then, the idea is to use an entropy characterization of absolutely continuous measures. It is a version of Ledrappier-Young's entropy characterization \cite{Ledrappier_Young_1985} for endomorphisms, the precise statement for one-dimensional systems is given by Theorem \autoref{th:form-entropie} (corresponding to Theorem VII.1.1 from \cite{SETE}).
Hence, our goal is to build a hyperbolic measure $\mu$ satisfying the entropy formula $h(\mu) = \int \log |f'| \: d \mu$.
In section \hyperref[sec:mes-emp]{\textbf{5.}}, we define a measure $\mu$ as a limit of ''partial'' empirical measures: instead of averaging the Dirac measures $\delta_{f^k x}$ over all $k$ in $[\![ 0 ; n-1 ]\!]$, we only average over points of the orbit where the dynamic shows some expanding behavior, such $k$'s are called \textit{geometric times} (introduced in \cite{MR4701884} and inspired by \textit{hyperbolic times} \cite{Alves2000}).
We will use this expansion to prove that $\mu$ is hyperbolic.
We define geometric times in section \hyperref[sec:dens-geom]{\textbf{3.}}, where we also show that they happen with positive density on a set of positive Lebesgue measure --- we use the bound $R(f) / r$ to do so.
We then show in section \hyperref[sec:tpsHB]{\textbf{4.}} that geometric times are in fact hyperbolic times, in the sense of \cite{Alves2000}. A straightforward but important consequence is that if a point has an imminent geometric time, then it cannot be too close to a critical point.
In other words, our limit measure $\mu$ will avoid places where $f'$ is too close to zero, which we will use to prove that $\log |f'|$ is integrable with respect to $\mu$.\\

We explain how we estimate the entropy of $\mu$.
We first point out that our sequence of empirical measures is of the form $(\mu_n^M)_n$ where $M$ is a parameter controlling the distance to geometric times --- as $M$ grows, we allow the $k$'s in the definition of $\mu_n^M$ to get further from geometric times.
We then estimate the entropy of the empirical measures $\mu_n^M$ for some well-chosen countably infinite partition (section \hyperref[sec:entropie]{\textbf{6.}}) and show in section \hyperref[ssec:SRB]{\textbf{7.a.}} that letting $n$ and $M$ go to infinity gives entropy estimates for $\mu$.
We build this partition by dividing $I$ into monotone branches and regions where $\log |f'|$ is fixed.
The fact that $\log |f'|$ is not bounded and that $f$ may have infinitely many monotone branches explains why this partition is countably infinite.
The question of how to choose such a partition already arose when dealing with surface diffeomorphisms \cite{MR4701884}, but for one-dimensional dynamics, being able to use monotone branches makes the proof more efficient as it is very suitable with regard to the Reparametrization Lemma (see Lemma \autoref{lem:repar-monot}).
Since the chosen partition is infinite in our case, we must first show that it has finite entropy for the limit measure $\mu$ (section \hyperref[ssec:partition]{\textbf{6.b.}}).
To prove this, we use an argument from Mañé's proof of the Pesin's entropy formula \cite{Mane_Pesin}, where he shows that if the diameter of a partition is integrable with respect to some $f$-invariant measure, then this partition has finite entropy for that measure.
The fact that our partition has finite entropy will then follow from the integrability of $\log |f'|$ with respect to $\mu$.
The proof of the entropy estimate for $\mu_n^M$ is then based on a Gibbs inequality for this specific partition (section \hyperref[ssec:gibbs]{\textbf{6.c.}}).
In section \hyperref[ssec:SRB]{\textbf{7.a.}}, we put together all of the previous results to prove the entropy formula and the absolute continuity of $\mu$.
We eventually prove in section \hyperref[ssec:bassins]{\textbf{7.b.}} that the union of the basins of such measures covers $\{ x \in I \mid \chi(x) > \frac{R(f)}{r} \}$ Lebesgue-almost everywhere. We also prove the finiteness of such measures whose Lyapunov exponent is larger than some $b > R(f)/r$, and we prove the bound on the number of measures as stated in Proposition \autoref{prop:title}.

\section{Reparametrization Lemma}
\label{sec:RL}

We start by explaining the concept of a reparametrization Lemma.
The approach was introduced by Yomdin in \cite{Yomdin1987VolumeGA}, and is thus also called Yomdin's theory.
The idea is to divide the space $I$ into many small dynamically bounded pieces, all the while having an estimate of the number of pieces.
Formally, we will look for reparametrizations $\phi : [-1 ; 1] \to I$ such that the high order derivatives of $f \circ \phi$ are small. With these notations, the image of $\phi$ is one of these small pieces.
In the end, the dynamical complexity of $f$ can be understood through these reparametrizations, giving a rather combinatorial interpretation of the dynamic, which is helpful in many situations.\\

When using Yomdin theory in dimension one, many aspects are much simpler.
For example, dividing a one-dimensional space into small pieces does not need any geometric attention, while higher dimensions require using semi-algebraic geometric tools.
Another more important consequence is that it is easier to get distortion inequalities (see Lemma \autoref{lem:distortion}), which are essential to study absolutely continuous measures. We start by detailing this central fact.

\subsection{Bounded reparametrizations}
\label{ssec:reparam-bornee}


Let $r > 1$ and note $[1 ; r] = [ \! [ 1 ; \lfloor r \rfloor ] \! ] \cup \{ r \}$.
We consider the point $0$ in $I$, and we will say that a map $\sigma : [-1 ; 1] \to I$ is a \textit{reparametrization} if it is a $\mathcal{C}^r$ map whose derivative does not vanish and if we have:
$$
\sigma( \: ]-1;1] \: ) \cap \{ 0 \} = \emptyset
$$
This last condition is useful when $I$ is the circle, because it ensures the injectivity of $\sigma$.
We give more details about this in section \hyperref[ssec:diff-int-cercle]{\textbf{2.d.}}.
We will note $\sigma_* = \sigma([-1 ; 1])$ its image.

\begin{definition}[Bounded reparametrization]
A reparametrization $\sigma$ is said to be bounded if
$$
\max\limits_{s \in ]1 ; r]}|| d^s \sigma ||_{\infty} \leq \frac{1}{6} || \sigma' ||_{\infty}
$$
Then, for $\varepsilon > 0$, it is said to be $\varepsilon$-bounded if it also satisfies
$$
|| \sigma' ||_{\infty} \le \varepsilon
$$
Moreover, we say it is $(n , \varepsilon)$-bounded for $f : I \to I$ if we have
$$
\forall k \in [ \! [ 0 ; n ] \! ], f^k \circ \sigma \text{ is } \varepsilon \text{-bounded}
$$
\end{definition}

As mentioned before, the important property satisfied by bounded reparametrizations is the following control of the distorsion:

\begin{lem}[Distortion inequality]
\label{lem:distortion}
If $\sigma$ is a bounded reparametrization, then we have
$$
\forall t,s \in [-1 ; 1], \frac{| \sigma'(t) |}{|\sigma'(s)|} \leq \frac{3}{2}
$$
\end{lem}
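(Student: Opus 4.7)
The plan is to reduce the statement to the one-sided bound $|\sigma'(t) - \sigma'(s)| \leq \tfrac{1}{3}\|\sigma'\|_\infty$ for all $t,s \in [-1,1]$, from which the desired ratio estimate follows immediately. Since $\sigma'$ is continuous and nonvanishing on $[-1,1]$, we may assume without loss of generality that $\sigma' > 0$, and we set $M := \|\sigma'\|_\infty$.

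For the one-sided bound, I would split on the size of $r$. If $r \geq 2$, then $\lfloor r \rfloor \geq 2$, so $2 \in (1, r]$ and the hypothesis gives $\|d^2 \sigma\|_\infty \leq M/6$. The mean value theorem then yields
$$
|\sigma'(t) - \sigma'(s)| \leq \|d^2 \sigma\|_\infty \cdot |t - s| \leq \frac{M}{6} \cdot 2 = \frac{M}{3}.
$$
If $1 < r < 2$, then $\lfloor r \rfloor = 1$ and only the Hölder seminorm $\|d^r \sigma\|_\infty \leq M/6$ is available. Since $|t - s| \leq 2$ and $r - 1 < 1$, we get
$$
|\sigma'(t) - \sigma'(s)| \leq \|d^r \sigma\|_\infty \cdot |t - s|^{r - 1} \leq \frac{M}{6} \cdot 2^{r-1} \leq \frac{M}{3}.
$$
In both cases we conclude $|\sigma'(t) - \sigma'(s)| \leq M/3$ throughout $[-1,1]^2$.

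To finish, pick a sequence $s_n \in [-1,1]$ with $|\sigma'(s_n)| \to M$ (possible since $M$ is the essential supremum and $\sigma'$ is continuous); for any $t$,
$$
|\sigma'(t)| \;\geq\; |\sigma'(s_n)| - |\sigma'(t) - \sigma'(s_n)| \;\geq\; |\sigma'(s_n)| - \frac{M}{3},
$$
so letting $n \to \infty$ gives $|\sigma'(t)| \geq \tfrac{2M}{3}$. Combining with $|\sigma'(s)| \leq M$ yields $|\sigma'(t)|/|\sigma'(s)| \leq 3/2$.

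There is no real obstacle here: the proof is essentially a one-line mean value (or Hölder) estimate combined with the numerical coincidence that $2 \cdot \tfrac{1}{6} = \tfrac{1}{3}$, which is precisely the slack needed to pass from an additive bound $M/3$ to the multiplicative bound $3/2$. The only point requiring care is the case $r \in (1,2)$, where one must use the Hölder seminorm rather than a classical second derivative and observe that $2^{r-1} \leq 2$.
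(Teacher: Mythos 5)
Your proposal is correct and follows essentially the same argument as the paper: a $C/\varepsilon$-free estimate $|\sigma'(t)-\sigma'(s)|\leq \tfrac{1}{3}\|\sigma'\|_{\infty}$ obtained from the bound on $d^{s}\sigma$ for $s\in\,]1;r]$, followed by the reverse triangle inequality at a point (or sequence) realizing $\|\sigma'\|_{\infty}$. The paper merely unifies your two cases by working with $r'=\min(2,r)$ and the single estimate $|\sigma'(s)-\sigma'(s_0)|\leq |s-s_0|^{r'-1}\|d^{r'}\sigma\|_{\infty}$, which is the same computation.
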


\begin{proof}
Consider $s_0 \in [-1;1]$ such that $| \sigma'(s_0) | = ||\sigma'||_{\infty}$. By noting $r' = \min(2,r)$, we get
$$
|\sigma'(s) - \sigma'(s_0)| \leq |s-s_0|^{r'-1} || d^{r'} \sigma ||_{\infty}
\leq 2 \frac{1}{6} |\sigma'(s_0)|
$$
This leads to
$$
|\sigma'(s)| \geq |\sigma'(s_0)| -  |\sigma'(s_0) - \sigma'(s)| \geq \frac{2}{3} | \sigma'(s_0)|
$$
which does give
$$
\frac{| \sigma'(t) |}{|\sigma'(s)|} \leq \frac{| \sigma'(s_0) |}{|\sigma'(s)|} \leq \frac{3}{2}
$$
\end{proof}

We now state a lemma about bounded reparametrizations that we will use in the next section.

\begin{lem}[Lemma 6 from \cite{MR4701884}]
\label{lem:decoup-arbre-geom}
Let $\gamma : [-1 ; 1] \to I$ be a bounded reparametrization satisfying $|| \gamma' ||_{\infty} \geq \varepsilon$. 
Then there exists a finite family of affine maps $(\iota_j : [-1 ; 1] \to [-1 ; 1])_{j \in L}$ where $L$ is of the form $\overline{L} \sqcup \underline{L}$ and such that
\begin{itemize}
\item[i)] For any $j \in L$, the map $\gamma \circ \iota_j$ is an $\varepsilon$-bounded reparametrization and $|| (\gamma \circ \iota_j)'(0) || \geq \varepsilon / 6$
\item[ii)] $[-1 ; 1] = \left ( \bigcup\limits_{j \in \underline{L}} \iota_j([-1 ; 1]) \right ) \cup \left ( \bigcup\limits_{j \in \overline{L}} \iota_j\left ( \left [ - \frac{1}{3} ; \frac{1}{3} \right ] \right ) \right )$
\item[iii)] $\# \underline{L} \leq 2$ and $\# \overline{L} \leq 6 \left ( \frac{|| \gamma' ||_{\infty}}{\varepsilon} + 1\right )$
\item[iv)] For $x \in \gamma_*$, we have $\# \{ j \in L \mid (\gamma \circ \iota_j)_* \cap B(x, \varepsilon) \neq \emptyset \} \leq 100$
\end{itemize}
\end{lem}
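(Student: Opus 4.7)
The plan is to take every $\iota_j$ to be an affine contraction with a single common rate $a \in (0,1]$, so the whole combinatorial problem reduces to choosing the set of centers. Set $K := \|\gamma'\|_\infty \geq \varepsilon$; the natural choice is $a = \varepsilon/K$, possibly adjusted by a constant depending on $r$ so that the bounded condition is preserved under composition. For any center $b \in [-1+a, 1-a]$ define $\iota(t) = at + b$, whose image is contained in $[-1,1]$. By the chain rule, $\|(\gamma \circ \iota)'\|_\infty \leq aK = \varepsilon$ and, using the boundedness of $\gamma$, $\|(\gamma \circ \iota)^{(s)}\|_\infty \leq a^s \|\gamma^{(s)}\|_\infty \leq (a^{s-1}/6)\,\varepsilon$ for every $s \in\,]1,r]$. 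The distortion inequality of Lemma \autoref{lem:distortion} applied to $\gamma$ gives $|\gamma'(b)| \geq (2/3)K$, hence $|(\gamma \circ \iota)'(0)| = a|\gamma'(b)| \geq (2/3)\varepsilon \geq \varepsilon/6$, and $\|(\gamma \circ \iota)'\|_\infty \geq (2/3)\varepsilon$. Comparing the two sides, the ratio between a higher derivative and the first derivative of $\gamma \circ \iota$ is at most $a^{s-1}/4$, which is below $1/6$ for $a$ chosen small enough (namely $a^{r-1} \leq 2/3$), so $\gamma \circ \iota$ is genuinely an $\varepsilon$-bounded reparametrization.

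With the rate $a$ fixed, I would place the centers explicitly. The two boundary reparametrizations in $\underline{L}$ are centered at $b_- = a-1$ and $b_+ = 1-a$, with images $\iota_{\pm}([-1,1]) = [-1, -1+2a]$ and $[1-2a, 1]$, which cover the two ends. For $\overline{L}$, place interior centers $b_j = -1 + a + \tfrac{2a}{3}\,j$ for $j = 0, 1, \ldots, N$ with $N = \lceil 3(1-a)/a \rceil$. Consecutive centers are spaced by $2a/3$ and each $\iota_j([-1/3, 1/3]) = [b_j - a/3, b_j + a/3]$ has length $2a/3$, so the short intervals chain to cover $[-1+2a/3, 1-4a/3]$; together with the two boundary images the full $[-1,1]$ is covered. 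The count is $\#\underline{L} = 2$ and $\#\overline{L} \leq 3/a + 1 = 3K/\varepsilon + 1 \leq 6(K/\varepsilon + 1)$, as required.

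For the local finiteness (iv), the key remark is that $\gamma'$ does not vanish, so $\gamma$ is monotonic on $[-1,1]$, hence a homeomorphism onto its image, and by distortion $|\gamma'| \geq (2/3)K$ everywhere. Therefore, for any $x$, the preimage $\gamma^{-1}(B(x,\varepsilon) \cap \gamma_*)$ is an interval of length at most $3\varepsilon/K = 3a$. An interval $\iota_j([-1,1])$ of length $2a$ meets it only when the center $b_j$ lies within $5a/2$ of the center of that preimage; since the $b_j$'s are spaced by $2a/3$, at most $\lceil 5a/(2a/3) \rceil + O(1) \leq 9$ indices qualify, plus the two boundary ones, well below the allowed $100$.

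The main obstacle is in the first paragraph: making sure $\gamma \circ \iota$ truly satisfies the bounded condition (a \emph{ratio} between higher derivatives and the first derivative) and not merely has a small first derivative. This forces a careful interplay between the bounded hypothesis for $\gamma$, the chain-rule factor $a^s$ produced by the affine contraction, and the distortion lower bound on $|\gamma'|$ controlling the denominator; the rate $a$ must be tight enough that the $a^{s-1}$ gain dominates the factor $3/2$ introduced by distortion, while still being $\Theta(\varepsilon/K)$ so as not to inflate $\#\overline{L}$ beyond the linear bound $6(K/\varepsilon+1)$.
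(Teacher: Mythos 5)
You are re-proving an imported statement: the paper itself gives no proof of this lemma (it is quoted as Lemma 6 of \cite{MR4701884}), so I judge your argument on its own merits. Your construction --- a single contraction rate $a$ of order $\varepsilon/\|\gamma'\|_\infty$, two boundary pieces in $\underline{L}$, interior centres spaced by $2a/3$ so the middle thirds chain up --- is the natural one, and your verifications of (ii), (iii), (iv), of $\|(\gamma\circ\iota_j)'\|_\infty\le\varepsilon$ and of $|(\gamma\circ\iota_j)'(0)|\ge\varepsilon/6$ are essentially fine (minor slips: the last interior centre can overshoot $1-a$; the reparametrization condition $(\gamma\circ\iota_j)(\,]-1;1]\,)\cap\{0\}=\emptyset$ is never checked, and on the circle the injectivity of $\gamma$ used in (iv) comes from that condition, not from $\gamma'\neq 0$ alone; for $r\ge 2$ the binding constraint is $a\le 2/3$, i.e.\ $s=2$, not $a^{r-1}\le 2/3$).

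The genuine gap is exactly the point you flag in your last paragraph and do not resolve: the \emph{bounded} condition for $\gamma\circ\iota_j$ when $1<r<2$. Your estimate compares $\|d^{s}(\gamma\circ\iota_j)\|_\infty\le a^{s}\|d^{s}\gamma\|_\infty\le \tfrac{a^{s}}{6}\|\gamma'\|_\infty$ with $\|(\gamma\circ\iota_j)'\|_\infty\ge \tfrac23 a\|\gamma'\|_\infty$ (Lemma \autoref{lem:distortion}), so you need $a^{\min(2,r)-1}\le 2/3$, i.e.\ $a\le(2/3)^{1/(r-1)}$ when $r<2$, which tends to $0$ as $r\downarrow 1$. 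But (i) forces $a\ge \varepsilon/(6\|\gamma'\|_\infty)$ (since $|(\gamma\circ\iota_j)'(0)|\le a\|\gamma'\|_\infty$), and (ii)+(iii) force $a\ge 3/(6\|\gamma'\|_\infty/\varepsilon+12)$; since the hypothesis only gives $\|\gamma'\|_\infty/\varepsilon\ge 1$, these are incompatible with your requirement once $r$ is close to $1$ and $\|\gamma'\|_\infty/\varepsilon$ is moderate, so "adjusting $a$ by a constant depending on $r$" destroys both the $\varepsilon/6$ bound in (i) and the constant $6$ in (iii). Nor is this an artifact of a lossy estimate: take $r=1.1$, $\|\gamma'\|_\infty=K=2\varepsilon$ (so $a=1/2$, and your piece $\iota_1$ has image $J=[-2/3,1/3]$), and $\gamma'(t)=K\left(1-\tfrac16\min\left((1-t)^{r-1},\ \tfrac12+|t+\tfrac16|^{r-1}\right)\right)$. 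This $\gamma$ is a bounded reparametrization with $\|\gamma'\|_\infty=K$ attained at $t=1$, the $(r-1)$-H\"older constant of $\gamma'$ on $J$ equals $K/6$ (the kink at $t=-1/6\in J$), and $\sup_J|\gamma'|=\tfrac{11}{12}K$; since boundedness of $\gamma\circ\iota_1$ is equivalent to $a^{r-1}\cdot(\text{H\"older constant of }\gamma'\text{ on }J)\le\tfrac16\sup_J|\gamma'|$, and here $2^{-1/10}\,K/6\approx 0.156\,K>\tfrac16\cdot\tfrac{11}{12}K\approx 0.153\,K$, the map $\gamma\circ\iota_1$ is not bounded and (i) fails. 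The underlying obstruction is that, unlike $|\gamma'|$, the H\"older seminorm controlling $\|d^{r}\gamma\|_\infty$ does not localize on subintervals, so a single uniform piece size cannot be certified bounded with constants independent of $r$; handling this (by a finer, adaptive subdivision) is precisely the Yomdin-type content of Burguet's proof, and it is the idea missing from your proposal.
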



\subsection{Reparametrization Lemma}

We first introduce some more notations. We will note 
$\gls*{Crit} = Crit(f) = \{ x \in I \mid f'(x) = 0 \}$, and for any 
$n \in \gls*{Z_plus}=[\![ 1 ; + \infty [\![$, we let
$$
\gls*{Critn} = Crit(f^n) \; \; \; \text{and} \; \; \; \mathcal{C}_{\infty} = \bigcup\limits_{n \in \bbZ^+} \mathcal{C}_n
$$
For $z \in I \backslash \gls*{Critinf}$ and $n \in \bbZ^+$, note
\begin{align*}
\gls*{Kng} &= \lfloor \log^+ |g'(g^{n-1}(z))| \rfloor \in [\![ 0 ; \log || g' ||_{\infty} ]\!]\\
\gls*{Kng_prime} &= \lfloor \log^- |g'(g^{n-1}(z))| \rfloor \geq 0
\end{align*}
where $\gls*{part_entiere}$ is the largest integer smaller or equal to $x$.
Lastly, for 
$k,k' \in (\gls*{Zz_plus})^n$, where $\bbZ_0^+ = \bbZ^+ \cup \{ 0 \}$,we let
$$
\gls*{Hg} = \left \{ x \in I \mid k_{i,g}(x) = k_i \text{ and } k_{i,g}'(x) = k_i' \text{ for any } i \in [\![ 1 ; n ]\!] \: \right \}
$$
Let us explain the purpose of this notation.
Yomdin's original Reparametrization Lemma was local in space, meaning that the part of the space that would get divided into many pieces is not $\sigma_*$, but only the intersection of $\sigma_*$ with a small enough dynamical ball.
However, similarly to Burguet's Reparametrization Lemma from \cite{MR4701884}, our Reparametrization Lemma will be global in space, but local in terms of values of the $\log$ of the derivative.
More precisely, we will not reparametrize $\sigma_*$ completely, but only the intersection of $\sigma_*$ with $\mathcal{H}_g(k,k')$, for any $k, k'$.\\

We now give a first taste of the forthcoming reparametrization lemma.
We start by considering a reparametrization $\sigma$, and we assume that its image $\sigma_*$ is small enough --- the precise bound on this size is given in the proof of the \hyperref[lem:RL]{Reparametrization Lemma}, it depends on $f$ and $r$ and it allows us to approximate $f$ by its Taylor polynomial.
Fix $k_1,k_1' \in \bbZ_0^+$, and divide the intersection $\sigma_* \cap \mathcal{H}_g(k_1,k_1')$ into many pieces so that $f$ composed with the reparametrizations corresponding to these pieces has bounded distorsion. This gives a first family of reparametrizations.
Then, to control the distorsion of $f^n$, we iterate the construction that allowed us to bound the distorsion of $f^1$.
In particular, if $\theta$ is one of the reparametrizations in the above family, then the set $(f \circ \sigma \circ \theta)_*$ must be small enough, which is not necessarily the case.
To ensure that it is small enough, we apply Lemma \autoref{lem:decoup-arbre-geom} to the reparametrization $f \circ \sigma \circ \theta$.
In other words, for every $\theta$ such that the image $(f \circ \sigma \circ \theta)_*$ is not small enough, we divide the set $(\sigma \circ \theta)_*$ into even more pieces.
In the end, we obtain reparametrizations that allow us to bound the distorsion of $f^1$ and whose images by $f$ are small enough.
We denote this family of reparametrizations by $\mathcal{T}_1^{(k_1,k_1')}$.
Then, to control the distortion of $f^n$, we proceed by induction and follow the steps shown in Figure 1:

\vspace{1em}
%


\begin{figure}[h!]
\centering
\includegraphics[width=37em]{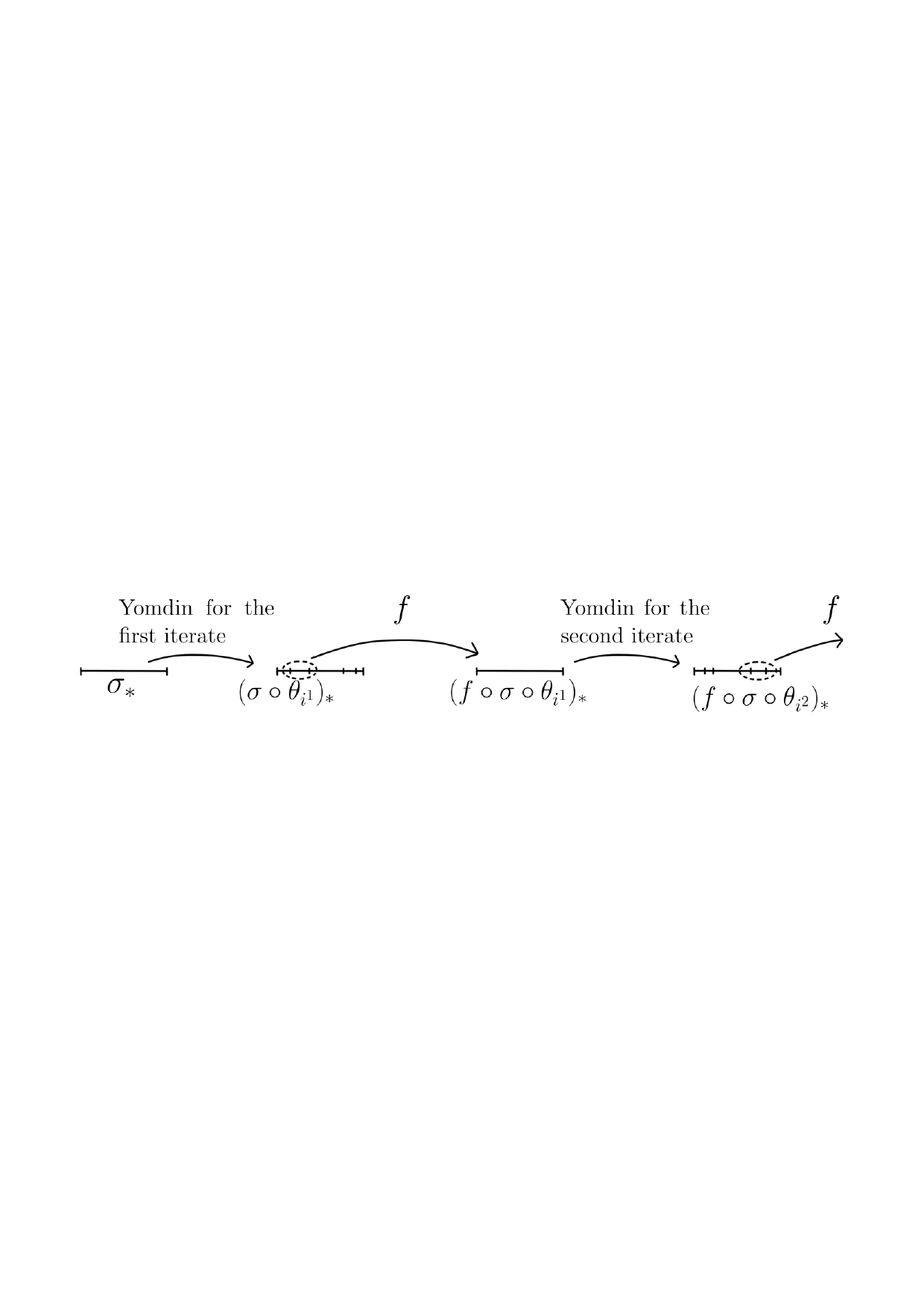}
\vspace{0.5em}
\caption{Iterating Yomdin's division process to bound the distortion of iterates of $f$}
\vspace{1.5em}
\end{figure}

For this induction, formally, we consider a reparametrization $\theta \in \mathcal{T}_1^{(k_1,k_1')}$, and we reparametrize the set $(f \circ \sigma \circ \theta)_* \cap \mathcal{H}_g(k_2,k_2')$ with fixed $k_2, k_2' \in \bbZ_0^+$, so that $f$ has bounded distortion through these new reparametrizations.
Hence if we note $(\phi_{\theta}^i)_i$ these new reparametrizations, then the reparametrizations $\theta \circ \phi_{\theta}^i$ cover $\sigma^{-1} \mathcal{H}_g((k_1,k_1'),(k_2,k_2'))$ and bound the distortion of $f^2$.
By doing it for every $\theta \in \mathcal{T}_1^{(k_1,k_1')}$, we get a second level of reparametrizations nested in $\mathcal{T}_1^{(k_1,k_1')}$ where $f^2$ has bounded distortion, call it $\mathcal{T}_2^{((k_1,k_1'),(k_2,k_2'))}$.\\

So if we fix $k=(k_1, ..., k_n)$ and $k'=(k_1', ..., k_n')$ and note $\mathcal{T}_n^{(k,k')}$ the $n$-th level of reparametrizations of $\mathcal{H}_g(k,k')$, we get a tree structure as represented in Figure 2.
As for notations, we write $(\gls*{theta_n})_{i^n}$ the $n$-th level of reparametrizations and $\gls*{theta_nk}$ the child of $\theta_{i^n}$ at level $k$.
\vspace{1em}

\begin{figure}[h!]
\centering
\includegraphics[height=24em]{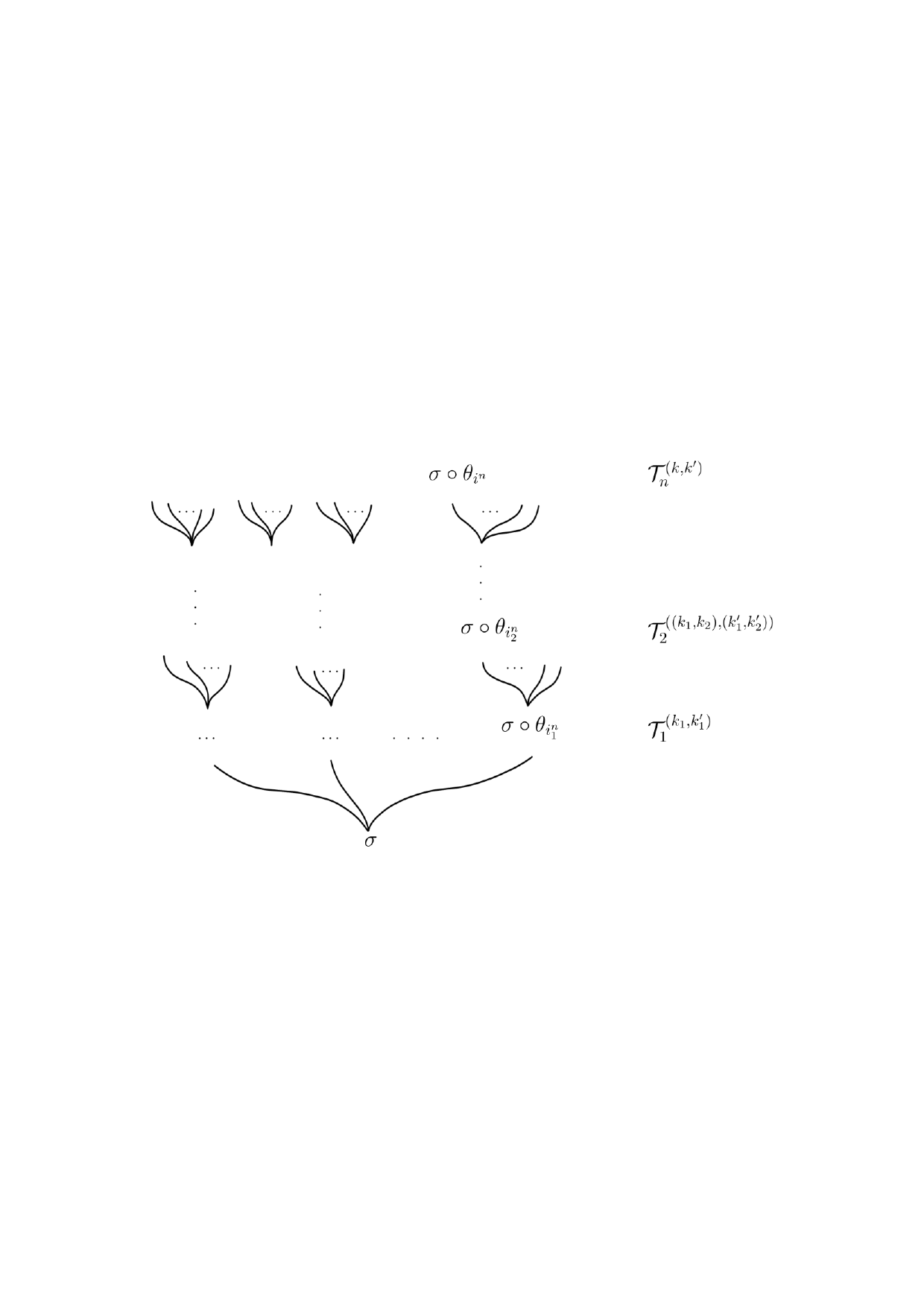}
\vspace{0.5em}
\caption{A tree where children of a node $\sigma \circ \theta$ are of the form $\sigma \circ \theta \circ \varphi$ where $\varphi$ is an affine contraction}
\vspace{1.5em}
\end{figure}

We build these branches for each $\mathcal{H}_g(k,k')$, and label every vertex in $\mathcal{T}_n^{(k,k')}$ with $(k_n, k_n')$ to know from which $\mathcal{H}_g(k,k')$ it comes from.
Then, we note $\gls*{tree_n}$ the union of all $\mathcal{T}_n^{(k,k')}$ for $k,k' \in (\bbZ_0^+)^n$.
Notice that this tree may have unbounded degree, although its restriction to the vertices labeled by a given $(k,k')$ has bounded degree (see item $5)$ of Lemma \autoref{lem:RL}).\\

Lastly, recall that we had to use Lemma \autoref{lem:decoup-arbre-geom} in the construction.
The purpose was to ensure that the image of a reparametrization was small enough to approximate $f$ by its Taylor polynomial.
For example, in the first step of the induction, we reparametrized the $\theta$'s for which $( f \circ \sigma \circ \theta)_*$ was not small enough.
When we have to make this additional division, we will consider that the resulting pieces are the ones that see the expansion of the dynamic.
To keep track of these pieces, we will say that a vertex $i^n$ of the tree $\mathcal{T}_n$ can be of two different types: write $i^n \in \gls*{tree_n_geom}$ for the ones that see some expansion and $i^n \in \underline{\mathcal{T}_n}$ for the others.
Then, by replacing $f$ with some iterate $f^p = g$, we get this Reparametrization Lemma:

\begin{lem}[Reparametrization Lemma]
\label{lem:RL}
For $p \in \bbZ^+$, there exists $\varepsilon > 0$ such that, for any $\varepsilon$-bounded reparametrization $\sigma : [-1 ; 1] \to I$, we have a tree $\mathcal{T}$ and affine maps $(\theta_{i^n} : [-1 ; 1] \to [-1 ; 1])_{i^n \in \mathcal{T}_n}$ such that for any $n$:\\

\hspace{1em}1) For any $i^n \in \mathcal{T}_n$, the reparametrization $\sigma \circ \theta_{i^n}$ is $(n, \varepsilon)$-bounded for $g=f^p$\\

\hspace{1em}2) For any $i^n \in \mathcal{T}_n$, the affine map $\theta_{i^n}$ is of the form $\theta_{i^n_{n-1}} \circ \varphi_{i^n}$ where $\varphi_{i^n}$ is an affine contraction of rate smaller than $1/100$, and when $i_{n-1}^n \in \overline{\mathcal{T}_{n-1}}$, we have $\theta_{i^n}([-1 ; 1]) \subset \theta_{i_{n-1}^n}([- 1/3 ; 1/3])$\\

\hspace{1em}3) For any $i^n \in \overline{\mathcal{T}_n}$, we have $| (g^n \circ \sigma \circ \theta_{i^n})' (0) | \geq \varepsilon / 6$\\

\hspace{1em}4) For any $k^n, k'^n \in (\bbZ_0^+)^n$, the set $\sigma^{-1} \mathcal{H}_g(k^n, k'^n)$ is contained in the following union
$$\left(\bigcup\limits_{
\substack{
i^n \in \overline{\mathcal{T}_n} \text{ such that}\\
k^{(')}(i^n) = {k^{(') n}}}}
\theta_{i^n}([-1/3 ; 1/3]) \right)
\bigcup
\left ( \bigcup\limits_{\substack{
i^n \in \underline{\mathcal{T}_n} \text{ such that}\\
k^{(')}(i^n) = {k^{(') n}}}}
\theta_{i^n}([-1 ; 1]) \right )
$$
each set of these unions having non empty intersection with $\sigma^{-1} \mathcal{H}_g(k^n, k'^n)$\\

\hspace{1em}5) For $i^{n-1} \in \mathcal{T}_{n-1}$ and $k_n' \in \bbZ_0^+$, we have
$$
\# \{ i^n \in \overline{\mathcal{T}_n} \mid i^{n-1} = i^n_{n-1}\text{ and } k_n'(i^n) = k_n' \} \leq C_r \log || g' ||_{\infty} e^{\max \left ( \log || g' ||_{\infty}, \frac{k_n'}{r-1} \right )}
$$
$$
\# \{ i^n \in \underline{\mathcal{T}_n} \mid i^{n-1} = i^n_{n-1}\text{ and } k_n'(i^n) = k_n' \} \leq C_r \log || g' ||_{\infty} e^{\frac{k_n'}{r-1}}
$$\\[-1em]

\hspace{1em}6) $I \backslash \mathcal{C}_n = \bigcup\limits_{k^n, k'^n \in (\bbZ_0^+)^n} \mathcal{H}_g(k^n, k'^n)$ and $\bigcup\limits_{i^n \in \mathcal{T}_n} (\sigma \circ \theta_{i^n})_* \supset \sigma([-1 ; 1]) \backslash \mathcal{C}_n$
\end{lem}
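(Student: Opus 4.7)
The plan is a double induction. The outer induction on $n$ builds the tree level by level; the inner step, for each fixed parent $i^{n-1}$, produces its children by a stratification-plus-rescaling argument capped off with one application of Lemma \autoref{lem:decoup-arbre-geom}. The base case $n = 0$ is immediate: take $\mathcal{T}_0$ to be a single root with $\theta = \mathrm{id}_{[-1,1]}$, which is $\varepsilon$-bounded by hypothesis on $\sigma$.

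For the inductive step, fix $i^{n-1} \in \mathcal{T}_{n-1}$ and set $\psi := \sigma \circ \theta_{i^{n-1}}$ and $\gamma := g^{n-1} \circ \psi$. By induction $\psi$ is $(n-1,\varepsilon)$-bounded, so in particular $\gamma$ is $\varepsilon$-bounded; the task reduces to subdividing $[-1,1]$ into affine pieces $\varphi$ such that $g \circ \gamma \circ \varphi$ is an $\varepsilon$-bounded reparametrization. I first stratify the parameter space according to the target labels $(k_n, k_n') \in (\bbZ_0^+)^2$, that is, according to the value that $(k_{n,g}, k_{n,g}')$ takes at $\gamma(t)$; on such a stratum one has $e^{-k_n'-1} \le |g'\circ\gamma| \le e^{k_n+1}$, while all higher derivatives of $g$ are uniformly controlled by $\|f'\|_{r-1}$.

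The core computation is then to find a contraction rate $\lambda$ such that an affine contraction $\varphi$ of rate $\lambda$ turns $g \circ \gamma \circ \varphi$ into a bounded reparametrization, i.e.\ $\|(g \circ \gamma \circ \varphi)^{(s)}\|_\infty \le \tfrac{1}{6}\|(g\circ\gamma\circ\varphi)'\|_\infty$ for every $s \in (1,r]$. Writing the $s$-th derivative of $g \circ \gamma$ via Fa\`a di Bruno's formula for integer $s$, and via a H\"older remainder for the fractional order $s = r$ (both valid because, by taking $\varepsilon$ small enough in terms of $p$, $r$ and the $\mathcal{C}^r$ norm of $f$, the image of $\gamma \circ \varphi$ can be made small enough to approximate $g$ by its Taylor polynomial), and using the $\varepsilon$-boundedness of $\gamma$, the constraint reduces to requiring $\lambda^{s-1}$ to absorb a factor of order $e^{k_n'}\|f'\|_{r-1}$. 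Hence $\lambda$ of order $e^{-k_n'/(r-1)}$ suffices, and the stratum is covered by $O(e^{k_n'/(r-1)})$ affine pieces. To make each resulting $g \circ \gamma \circ \varphi$ genuinely $\varepsilon$-bounded (its image may still be too large), I then apply Lemma \autoref{lem:decoup-arbre-geom} to it: this yields at most $6(\|(g\circ\gamma\circ\varphi)'\|_\infty/\varepsilon+1) = O(\max(1,\lambda e^{k_n}))$ expansion children, placed in $\overline{\mathcal{T}_n}$ and automatically satisfying property 3), together with at most two boundary children placed in $\underline{\mathcal{T}_n}$.

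Multiplying the $O(e^{k_n'/(r-1)})$ stratum contractions by these $O(\max(1,\lambda e^{k_n}))$ expansion children and summing over the admissible range $k_n \in \{0,\ldots,\lfloor\log\|g'\|_\infty\rfloor\}$ yields property 5): the maximum in the exponent appears because the stratum contractions contribute the factor $e^{k_n'/(r-1)}$ while Lemma \autoref{lem:decoup-arbre-geom} contributes the factor $e^{k_n} \le \|g'\|_\infty$, and the $\log\|g'\|_\infty$ prefactor is the length of the sum over $k_n$. The remaining properties follow from the construction: 1) is inductive; 2) holds with contraction rate $\le 1/100$ provided $\varepsilon$ was taken small enough at the outset; 4) and 6) are built in because the stratification by $\mathcal{H}_g$ exhausts $I\setminus\mathcal{C}_n$. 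The main obstacle is the Fa\`a di Bruno estimate at non-integer $r$: one has to mix an integer-order Taylor expansion with a H\"older remainder of exponent $r-\lfloor r\rfloor$, and it is precisely the resulting factor of $r-1$ in the denominator of the admissible contraction rate that produces the exponent $k_n'/(r-1)$ appearing in property 5).
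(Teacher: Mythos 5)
Your overall architecture — induction on the level $n$, stratification of the parameter interval by the values $(k_n,k_n')$, covering each stratum by affine contractions, then one application of Lemma \autoref{lem:decoup-arbre-geom} to produce the expansion children in $\overline{\mathcal{T}_n}$ (giving item 3) and the boundary children in $\underline{\mathcal{T}_n}$, with the $\log ||g'||_{\infty}$ prefactor in item 5 coming from the sum over $k_n$ — is exactly the construction the paper imports from Burguet, and your bookkeeping for item 5 is consistent with it. The gap is in the one step that is the real content of the lemma: the per-stratum contraction rate. You require $||d^s(g\circ\gamma\circ\varphi)||_{\infty}\le\frac16||(g\circ\gamma\circ\varphi)'||_{\infty}$ for \emph{every} $s\in\,]1;r]$, assert that the constraint is ``$\lambda^{s-1}$ must absorb a factor of order $e^{k_n'}||f'||_{r-1}$'', and conclude $\lambda\sim e^{-k_n'/(r-1)}$. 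If the factor to be absorbed is of size $\sim e^{k_n'}$ uniformly in $s$, then since $\lambda<1$ the binding constraint is the \emph{smallest} admissible order, $s=\min(2,r)$, not $s=r$: for $r\ge2$ this forces $\lambda\sim e^{-k_n'}$ and hence up to $\sim e^{k_n'}$ children per stratum (the stratum preimage can have measure comparable to the whole interval, e.g.\ when $g'$ oscillates at the largest frequency allowed by the $\mathcal{C}^r$ bound), not $e^{k_n'/(r-1)}$. Your derivation yields the stated rate only when $1<r<2$, where $]1;r]=\{r\}$.

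This is not a cosmetic loss: the exponent $k_n'/(r-1)$ in item 5 is precisely what produces the term $\frac{S}{p(r-1)}$ in the proof of Proposition \autoref{prop:dens-pos} and hence the threshold $\frac{R(f)}{r}$; with a count $e^{k_n'}$ the same computation would only give a threshold of the form $\frac{R(f)}{2}$ for $r\ge 2$, and Theorem \autoref{thm:thm} at its stated sharp bound would be lost. Obtaining $1/(r-1)$ for all $r>1$ is the genuinely $\mathcal{C}^r$ part of the argument: one has to either impose the boundedness requirement only at the top order $s=r$ (a contraction of rate $e^{-k_n'/(r-1)}$ suffices for that) and then recover the intermediate orders by a Landau--Kolmogorov/interpolation argument at the cost of a further subdivision into a bounded number of pieces, or use a finer pointwise bound on the intermediate derivatives of $g$ near points where $|g'|\approx e^{-k_n'}$; this is the content of the proof in \cite{MR4701884}, which the paper does not reprove but invokes after fixing $\varepsilon$ by $(2\varepsilon)^{\min(2,r)-1}<\frac{1}{2||g'||_{r-1}}$ and establishing the preliminary estimate on $t\mapsto g(x+2\varepsilon t)$. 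The paper's two announced modifications of that proof are the $\log||g'||_{\infty}$ factor (which you do recover) and an extra division guaranteeing that $(g^k\circ\sigma\circ\theta_{i^n})(\,]-1;1]\,)$ avoids the marked point $0$ (needed for the circle case and for Lemma \autoref{lem:repar-monot}, and doubling $C_r$); the latter is absent from your sketch, but it is a minor omission compared with the contraction-rate issue above.
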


\begin{proof}
Note $r' = \min(2,r)$, and fix $\varepsilon > 0$ satisfying
$$
(2 \varepsilon)^{r'-1} < \frac{1}{2 || g' ||_{r - 1}}
$$
For $x \in I$, note $g_{2 \varepsilon}^x : t \in [-1 ; 1] \mapsto g(x + 2 \varepsilon t) \in I$.
We first show the following
$$
\forall x \in I, \forall s \in \: [1 ; r], || d^s \left ( g_{2 \varepsilon}^x \right ) ||_{\infty} \leq 3 \varepsilon \max \left ( 1, | g'(x)| \right )
$$
When $s \in ]1 ; r]$, we have $s \geq r'$, so
\begin{align*}
| d^s \left ( g_{2 \varepsilon}^x \right ) (t) | &= (2 \varepsilon)^s | d^s g (x + 2 \varepsilon t)|\\
&\leq (2 \varepsilon)^{r'} || g' ||_{r-1}\\
&\leq 2\varepsilon \times 1/2 
\end{align*}
And if $s=1$, then
\begin{align*}
| \left ( g_{2 \varepsilon}^x \right )' (t) | &= 2 \varepsilon |g'(x + 2 \varepsilon t)|\\
&\leq 2 \varepsilon \left ( |g'(x + 2 \varepsilon t) - g'(x)| + |g'(x)| \right )\\
&\leq 2 \varepsilon | 2 \varepsilon t |^{r'-1} || d^{r'} g||_{\infty} + 2 \varepsilon |g'(x)|\\
&\leq 3 \varepsilon \max (1, |g'(x)|)
\end{align*}

Once this $\varepsilon$ is defined, the proof is the same as for the Reparametrization Lemma in \cite{MR4701884}, except for two differences.
The first one relates to item $5)$, where we count the number of vertices of $\overline{\mathcal{T}_n}$ and $\underline{\mathcal{T}_n}$ for which $k_n'$ takes a given value.
However in \cite{MR4701884}, both the values of $k_n'$ and $k_n$ were fixed.
Hence, to obtain item $5)$ as stated here, we simply multiply the bound in \cite{MR4701884} by the number of all possible $k_n$, which explains the term $\log || g' ||_{\infty}$ in our bound.
The second difference is related to our definition of reparametrizations.
More precisely, we have to show that the maps $g^k \circ \sigma \circ \theta_{i^n}$ are reparametrizations, for $k \leq n$, in the sense that $\left ( g^k \circ\sigma \circ \theta_{i^n} \right )(]-1 ; 1]) \cap \{ 0 \} = \emptyset$.
To guarantee that this condition is satisfied, we make an additional division before applying Lemma \autoref{lem:decoup-arbre-geom}. It was not needed in \cite{MR4701884} and it multiplies the constant $C_r$ in item $5)$ by $2$.

\end{proof}

\subsection{The case of circle maps}
\label{ssec:diff-int-cercle}

A first technical remark is that when computing derivatives of circle maps, one should take care of the tangent bundle.
However, we do all the computations as for interval maps, which does not change the arguments.\\

A more important difference relates to monotonicity.
For interval maps, if we have some subinterval where $f'$ does not vanish, then $f$ is injective on this subinterval, but this is not true for circle maps.
With this in mind, consider the point $0$ in $I$, and define monotone branches as follows:

\begin{definition}[Monotone branch]
\label{def:monot-branche}
The monotone branches of $f$ are the sets of the form $[a;b[$ where $]a ;b[$ is a connected component of the following set
$$
\{ x \in I \mid f'(x) \neq 0 \text{ and } f(x) \neq 0 \}
$$
\end{definition}

The main consequence of this definition is Lemma \autoref{lem:repar-monot}: if $\sigma$ is an $(n, \varepsilon)$-bounded reparametrization for $f$, then it is contained in the closure of a monotone branch of $f^n$.
Furthermore, the following lemma implies that this definition of monotone branches does not change anything for interval maps.

\begin{lem}
\label{lem:preim-bords}
If $I = [ 0 ; 1]$, then
$$
f^{-1}(\{ 0 ; 1 \}) \subset \{ 0 ; 1 \} \cup \mathcal{C}
$$
\end{lem}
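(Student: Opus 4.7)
The plan is to observe that since $f$ maps $I = [0,1]$ into itself, the values $0$ and $1$ are the global minimum and maximum that $f$ can take. So any preimage of $0$ or $1$ that lies in the interior $(0,1)$ must be a local (in fact global) extremum of $f$, and by Fermat's theorem on interior extrema of differentiable functions the derivative vanishes there.

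More precisely, I would let $x \in f^{-1}(\{0,1\})$ and assume $x \notin \{0,1\}$, so $x \in (0,1)$. In the case $f(x) = 0$, for every $y \in I$ we have $f(y) \geq 0 = f(x)$, so $x$ is a local minimum of $f$ in the open interval $(0,1)$. Since $f$ is $\mathcal{C}^r$ with $r > 1$ it is differentiable, hence $f'(x) = 0$, i.e. $x \in \mathcal{C}$. The case $f(x) = 1$ is symmetric: $x$ is then an interior global maximum, so again $f'(x) = 0$.

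There is essentially no obstacle here: the statement is a direct consequence of Fermat's theorem, and the only thing one has to check is that the set inclusion $f(I) \subset I = [0,1]$ forces the boundary values of $I$ to be extrema when attained. The lemma is used in the paper only to justify that, for interval maps, the monotone branches defined via \autoref{def:monot-branche} coincide with the usual connected components of $\{f' \neq 0\}$ (up to endpoints in $\mathcal{C}$), since the extra condition $f(x) \neq 0$ in the definition only cuts at points that are already critical.
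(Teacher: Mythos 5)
Your proof is correct: since $f(I)\subset[0,1]$, any interior preimage of $0$ or $1$ is an interior global extremum, so $f'$ vanishes there because $f$ is $\mathcal{C}^r$ with $r>1$, hence differentiable. The paper states Lemma \autoref{lem:preim-bords} without proof, and your Fermat-type argument is precisely the standard justification the author leaves implicit; your closing remark about its role (showing the extra condition $f(x)\neq 0$ in Definition \autoref{def:monot-branche} only cuts at points of $\{0;1\}\cup\mathcal{C}$ when $I=[0;1]$) also matches the paper's stated purpose for the lemma.
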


\section{Positive density of geometric times}
\label{sec:dens-geom}

We start by defining geometric times.
Let $p \in \bbZ^+$ and let $g = f^p$. 
In Proposition \autoref{prop:dens-pos}, we will choose $p$ large.
Let $\varepsilon > 0$ be such that the \hyperref[lem:RL]{Reparametrization Lemma} applies to $g$. Let $\sigma$ be an $\varepsilon$-bounded reparametrization and $x \in \sigma_*$.
Note
$$
\gls*{Ep} := \left \{ m \in \bbZ_0^+ \mid \exists i^m \in \overline{\mathcal{T}_m}, k'(i^m) = k'^m(x), x \in \sigma \circ \theta_{i^m} \left ( \left [ - \frac{1}{3} ; \frac{1}{3} \right ] \right ) \right \}
$$
As in \cite{MR4701884}, elements of this set will be called \textit{geometric times}.
The idea is that if $n$ is a geometric time for $x$, then there is some small neighborhood of $x$ such that:
\begin{itemize}
\item[i)] For the first $n$ iterates of $g$, we have bounded distortion on this neighborhood
\item[ii)] The size of this neighborhood decreases exponentially in $n$, while the size of its image by $g^n$ is uniformly bounded from below
\end{itemize}

We now define several notions of density. First, we define the upper density of a subset $E$ of $\bbZ_0^+$ by
$$
\gls*{D_upp} = \limsup\limits_{n \to +\infty} \frac{1}{n} \# E \cap [\![ 0 ; n [\![
$$
Before taking the limit, we note for any $n \in \mathbb{Z^+}$
$$
\gls*{D_part} = \frac{1}{n} \# E \cap [\![ 0 ; n [\![ 
$$
We also define the density along a specific integer sequence $\mathcal{n}$ by
$$
\gls*{D_along} = \lim\limits_{\mathcal{n} \ni n \to +\infty} d_n(E)
$$

We now explain how the bound $R(f) / r$ in Theorem \autoref{thm:thm} will force positive upper density of $E_p(x)$.
It is related to the growth rate of the number of pieces we get when we follow the reparametrization scheme.
Let us recall that there are two types of reparametrizations, as explained in the last paragraph before stating the \hyperref[lem:RL]{Reparametrization Lemma}:
\begin{itemize}
\item[-] The ones in $\underline{\mathcal{T}_n}$ that directly gave $(n, \varepsilon)$-bounded reparametrizations for $g = f^p$. Their number has an exponential growth rate given by $k' / r-1$ (where $k'$ is some $k'_{n,g}(x)$).
\item[-] The ones in $\overline{\mathcal{T}_n}$ that required an additional division to be $(n,\varepsilon)$-bounded. These are the ones that see the expansion of the system, and their exponential growth rate is given by $\max (k' / r-1, \log ||g'||_{\infty} )$.
\end{itemize}
So, if the number of pieces has an exponential growth rate larger than $k' / r-1$, then the second type has to appear often enough. This is achieved if the system is expanding enough, and we will see that a Lyapunov exponent larger than $R(f) / r$ on a set of positive Lebesgue measure exactly corresponds to this large enough expansion.

\begin{prop}
\label{prop:dens-pos}
For $b > R(f) / r$, there exists $p_0$ such that the following property holds.
For any $p \geq p_0$, there exist $\beta = \beta_p > 0$ and $\varepsilon = \varepsilon_p > 0$ such that, if $\sigma$ is an $\varepsilon$-bounded reparametrization, then
$$
\limsup\limits_{n \to +\infty} \frac{1}{n} \log Leb \left ( \left \{ x \in \sigma_* \mid d_{\lfloor n / p \rfloor}(E_p(x)) < \beta \text{ and } | (f^n)'(x) | \geq e^{nb} \right \} \right ) < 0
$$
\end{prop}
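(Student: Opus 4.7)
The strategy is to cover the bad set by images of reparametrizations of level $N = \lfloor n/p \rfloor$ in the tree $\mathcal{T}$ provided by Lemma \autoref{lem:RL} applied to $\sigma$ with $g = f^p$, and to extract exponential decay by playing off the count of such reparametrizations (item $5)$ of Lemma \autoref{lem:RL}) against their small size (via the distortion inequality). The hypothesis $b > R(f)/r$ will enter the final exponent through the asymptotic $(1/p)\log\|(f^p)'\|_\infty \to R(f)$.

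First I would relate the condition $|(f^n)'(x)| \geq e^{nb}$ to one on $g^N$: writing $n = Np + s$ with $s \in [\![ 0 ; p [\![$, this yields $|(g^N)'(x)| \geq \|f'\|_\infty^{-(p-1)} e^{Npb}$. Next, for each $x$ in the bad set $B_N$, items $4)$ and $6)$ of Lemma \autoref{lem:RL} let me pick a path $(i^k)_{k=1}^N$ in $\mathcal{T}$ with $x \in (\sigma \circ \theta_{i^k})_*$ at each level, where at every level $k$ either $i^k \in \overline{\mathcal{T}_k}$ with $x \in \sigma\theta_{i^k}([-1/3,1/3])$ (so that $k \in E_p(x)$) or $i^k \in \underline{\mathcal{T}_k}$. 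Because $d_N(E_p(x)) < \beta$, one can arrange that the chosen path satisfies $\#\{k : i^k \in \overline{\mathcal{T}_k}\} < \beta N$; call such a path \emph{bad}. This yields the covering $B_N \subset \bigcup_{\text{bad paths}} (\sigma \circ \theta_{i^N})([-1, 1])$.

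Then I would estimate the length of each image and the number of bad paths separately. The fact that $g^N \circ \sigma \circ \theta_{i^N}$ is $\varepsilon$-bounded, together with the distortion inequality, gives
$$
|(\sigma \circ \theta_{i^N})([-1,1])| \leq \frac{C \varepsilon}{|(g^N)'(x)|} \leq C' \varepsilon \, e^{-Npb}
$$
whenever the image meets $B_N$. Classifying bad paths by their labels $K_k = k_{k,g}$, $K'_k = k'_{k,g}$ and types $\tau_k = \mathbf{1}[i^k \in \overline{\mathcal{T}_k}]$, item $5)$ bounds the number of paths per class by $\prod_k C_r (\log\|g'\|_\infty) \exp\bigl(\tau_k \max(\log\|g'\|_\infty, K'_k/(r-1)) + (1-\tau_k) K'_k/(r-1)\bigr)$. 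Using $\max(a,b) \leq a + b$ together with $\sum_k \tau_k \leq \beta N$, $\sum_k K'_k \leq \sum_k K_k - Npb$ (from the derivative condition) and $K_k \leq \log\|g'\|_\infty$, the total count summed over all admissible classes is bounded by a polynomial factor in $N$ and $\log\|g'\|_\infty$ times $\exp\bigl(N\log\|g'\|_\infty(\beta + 1/(r-1)) - Npb/(r-1)\bigr)$.

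Multiplying count by length yields
$$
Leb(B_N) \leq \exp\!\Bigl(-\tfrac{N}{r-1}\bigl[pbr - \log\|g'\|_\infty(\beta(r-1)+1)\bigr] + o(N)\Bigr).
$$
Since $(1/p)\log\|(f^p)'\|_\infty \to R(f)$ and $br > R(f)$, the bracket is $\sim p\bigl(br - R(f)(\beta(r-1)+1)\bigr)$, which is strictly positive for any $\beta \in \bigl(0,\,(br-R(f))/((r-1)R(f))\bigr)$ provided $p$ is large enough to absorb the subexponential factors. Consequently $Leb(B_N) \leq e^{-\delta N}$ for some $\delta > 0$, whence $\limsup_n (1/n) \log Leb(B_N) \leq -\delta/p < 0$. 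The main obstacle I anticipate is the bookkeeping in the combinatorial sum over $(K_k, K'_k, \tau_k)$: the inequality $\max(\log\|g'\|_\infty, K'_k/(r-1)) \leq \log\|g'\|_\infty + K'_k/(r-1)$ must combine with the constraint $\sum K'_k \leq \sum K_k - Npb$ in exactly the right way for the threshold $R(f)/r$ to emerge as the sharp condition allowing positive $\beta$.
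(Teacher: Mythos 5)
Your proposal follows essentially the same route as the paper: cover the bad set via item $4)$ of the \hyperref[lem:RL]{Reparametrization Lemma} by level-$N$ reparametrizations with fewer than $\beta N$ geometric ancestors, bound each image's length by $\varepsilon e^{-Npb}$ (the paper uses the change of variable formula on the intersection with $\{|(f^n)'|\geq e^{nb}\}$ rather than the distortion inequality, but this is equivalent), count them via item $5)$ together with the constraint $\sum k_i' \lesssim N(\log\|g'\|_\infty - pb)$, and conclude since the rate divided by $n=Np$ tends to $\frac{r}{r-1}\left(\frac{R(f)}{r}-b\right)+\beta R(f)+o_p(1)<0$. The only caveats are cosmetic: your combinatorial factors (choices of $(K_k,K_k',\tau_k)$ and the per-level factor $C_r\log\|g'\|_\infty$) are exponential in $N$ with rate $O(\log p)$, not polynomial, and $\sum K_k' \le \sum K_k - Npb$ holds only up to an additive $O(N+p)$ from the floors and the remainder $s$, but both effects vanish after dividing by $n$ and taking $p$ large, exactly as in the paper.
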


\begin{proof}
Let $p \in \bbZ^+$. Let $\varepsilon := \varepsilon_p > 0$ be given by the \hyperref[lem:RL]{Reparametrization Lemma} for $g = f^p$.
Let $\beta > 0$ be a quantity that we will determine at the end of the proof.
Let $\sigma$ be an $\varepsilon$-bounded reparametrization.
We start with the case where $n = mp$.
Consider the tree given by the \hyperref[lem:RL]{Reparametrization Lemma} up to level $m$.
Take $x \in \sigma_*$ such that
$$
d_m(E_p(x)) < \beta \text{ and } |(f^n)'(x)| \geq e^{nb}
$$
and note $(k_1', ..., k_m') = (k_1'(x), ..., k_m'(x))$. Then, by item $4)$ of the \hyperref[lem:RL]{Reparametrization Lemma}, we have some $i^m \in \mathcal{T}_m$ such that
$$
k'^m(i^m) = (k_1', ..., k_m') \; \; \; \text{and} \; \; \; x \in \left ( \sigma \circ \theta_{i^m} \right )_*
$$
These $k_i'$ satisfy the following inequality
\begin{align*}
\sum\limits_{i=1}^m k_i'(i^m)
&= \sum\limits_{i=1}^m k_i'(x)\\
&\leq \sum\limits_{i=0}^{m-1} \lfloor \log || g' ||_{\infty} \rfloor - \lfloor \log |g'(g^i x)| \rfloor\\
&\leq m \log || (f^p)' ||_{\infty} + m - \log |(f^n)'(x)|\\
&\leq m \left ( \log || (f^p)'||_{\infty} + 1 - pb \right)
\end{align*}
Now, for $S \in \bbR_+$, we have the following general statement
\begin{align}
\# \left \{ (k_1', ..., k_m') \in \bbZ_0^+ \mid \sum\limits_{i = 1}^m k_i' \leq m S \right \} \leq {{m(S+1)}\choose m} \leq e^{m \log (S+1) + m}
\label{ineq:3-1}
\end{align}
We will hence choose $S = \log || (f^p)'||_{\infty} + 1 - pb$. Then, consider the following inequality
\begin{align*}
Leb&\left ( \left \{ x \in \sigma_* \mid d_m(E_p(x)) < \beta \text{ and } | (f^n)'(x) | \geq e^{nb}\right \} \right )\\
&\leq \sum\limits_{\substack{(k_i')_{i \in [\![ 1 ; m ]\!]} \text{ t.q.}\\
\sum k_i' \leq m S}} \sum\limits_{\substack{i^m \in \mathcal{T}_m \text{ t.q.}\\
k'^m(i^m) = (k_i')\\
\text{et } \# \{ k \in [\![ 1 ; m ]\!] \mid i^m_k \in \overline{\mathcal{T}_k} \} < m \beta}}
\hspace{-2em}Leb ((\sigma \circ \theta_{i^m})_* \cap \{ |(f^n)'| \geq e^{nb} \})
\end{align*}
Since $g^m$ is injective on the image of the $(n, \varepsilon)$-bounded reparametrization $\sigma \circ \theta_{i^m}$, the change of variable formula (Lemma \autoref{lem:chgt-var}) gives
$$
Leb ((\sigma \circ \theta_{i^m})_* \cap \{ |(f^n)'| \geq e^{nb} \}) \leq \left ( \inf\limits_{(\sigma \circ \theta_{i^m})_* \cap \{ |(f^n)'| \geq e^{nb} \}} |(g^m)'|\right )^{-1} Leb ((g^m \circ \sigma \circ \theta_{i^m})_*)
$$
Furthermore, the reparametrization $g^m \circ \sigma \circ \theta_{i^m}$ is $\varepsilon$-bounded, so
$$
Leb ((\sigma \circ \theta_{i^m})_* \cap \{ |(f^n)'| \geq e^{nb} \}) \leq 2 \varepsilon e^{-nb}
$$
Then, for each $i^m \in \mathcal{T}_m$ and $k \in [\![ 1 ; m ]\!]$, we consider the vertices $i^m_k$ in $\mathcal{T}_k$, i.e. the parents of $i^m$, and we note $j$ the number of $k \in [\![ 1; m ]\!]$ such that $i_k^m \in \overline{\mathcal{T}_k}$. Hence, item $5)$ of the \hyperref[lem:RL]{Reparametrization Lemma} gives
\begin{align*}
Leb&\left ( \left \{ x \in \sigma_* \mid d_m(E_p(x)) < \beta \text{ and } | (f^n)'(x) | \geq e^{nb}\right \} \right )\\
&\leq 2 \varepsilon e^{- nb} \sum\limits_{\substack{(k_i')_{i \in [\![ 1 ; m ]\!]} \text{ t.q.}\\
\sum k_i' \leq m S}} \hspace{1em} \sum\limits_{j = 0}^{m \beta} \; \; \sum\limits_{1 \leq i_1 < ... < i_j \leq m} \left ( \prod\limits_{s=1}^j C_r \log || g' ||_{\infty} e^{\max \left ( \log || g' ||_{\infty}, \frac{k_{i_s}'}{r-1} \right )} \right )\\
&\hspace{17em}\times \left ( \prod\limits_{s \in [\![ 1 ; m ]\!] \backslash \{ i_1, ..., i_j \}} C_r \log || g' ||_{\infty} e^{\frac{k_s'}{r-1}} \right )\\
\text{using } (3.1) \hspace{2em} &\leq 2 \varepsilon e^{-nb} \times e^{m \log (S + 1) + m} \times 2^m \times C_r^m \left (  \log || g' ||_{\infty} \right )^m e^{\frac{m S}{r-1}} \times || f' ||_{\infty}^{p \times \beta m}
\end{align*}
Notice that $S/p \underset{p \to +\infty}{\longrightarrow} R(f) - b$, therefore
\begin{align*}
\limsup\limits_{p \bbZ^+ \ni n \to +\infty} \frac{1}{n} \log Leb_{\sigma_*} &\left ( \left \{ x \in \sigma_* \mid d_n(E_p(x)) < \beta \text{ and } | (f^n)'(x) | \geq e^{nb} \right \} \right )\\
&\leq -b + \frac{\log(S+1) + 1}{p} + \frac{\log (2 C_r p \log || f ' ||_{\infty})}{p}
+ \frac{S}{p(r-1)} + \beta \log || f' ||_{\infty}\\
&= -b + \frac{1}{r-1}(R(f) - b) + \beta \log || f' ||_{\infty} + \underset{p \to +\infty}{o}(1)\\
&=\frac{r}{r-1}\left ( \frac{R(f)}{r} - b \right ) + \beta \log || f' ||_{\infty} + \underset{p \to +\infty}{o}(1)
\end{align*}
Since $\frac{R(f)}{r} - b < 0$, this concludes the case $n=mp$.\\

For the general case, write $n = mp + s$ with $s \in [\![ 0 ; p-1 ]\!]$. We note $n' = mp$ and show that the previous argument still works.
Let $x \in \sigma_*$ be such that
$$
d_m(E_p(x)) < \beta \text{ and } |(f^n)'(x)| \geq e^{nb}
$$
Then
$$
|(f^{n'})'(x)| \: = \: |(f^{-s + n})'(x)|
\: \geq \: \frac{| (f^n)'(x)|}{|| f' ||_{\infty}^s}
\: \geq \: \frac{1}{|| f' ||_{\infty}^p} e^{nb}
\geq \: \frac{1}{|| f' ||_{\infty}^p} e^{n'b}
$$
Previous computations for $n'$ lead to an upper bound on the Lebesgue measure for $n$ and we would reach the same conclusion.
\end{proof}

\section{Geometric times are hyperbolic times}
\label{sec:tpsHB}

In this section, we show some useful properties of geometric times that emphasize the relationship between geometric and hyperbolic times, as defined by Alves, Bonatti and Viana \cite{Alves2000}.\\

We first introduce some notations that we will use throughout the rest of the paper.\\

For $E \subset \bbZ_0^+, M \in \bbZ_0^+$ and $n \in \bbZ_0^+$, we let
$$
\gls*{EnM} = \bigcup\limits_{\substack{n > k,l \in E\\
|k-l| \leq M}} [ \! [ k ; l [ \! [
$$
Then, for $m \in \bbZ_0^+$, we define the set $\gls*{EnMm}$ as follows:
for each connected component of $E_n^M$, we remove its last $L$ elements, where $L$ depends on the connected component and is the minimal integer such that $m-1 \leq L \leq M+m-2$ and such that the new connected component is still of the form $[\![ k ; l [\![$ with $k,l \in E$.
In particular, notice that $E_n^{M,1} = E_n^M$.
Notice as well that for $n$ and $m$ fixed, the set $E_n^{M,m}$ is non-decreasing in $M$.
Indeed, consider the set $\widetilde{E}_n^{M,m}$ where we only removed the last $m-1$ elements of each connected component of $E_n^M$.
Hence, for $M \leq M'$, the inclusion $E_n^M \subset E_n^{M'}$ gives $\widetilde{E}_n^{M,m} \subset \widetilde{E}_n^{M',m}$. Then, to build $E_n^{M,m}$, we removed more and more elements at the end of each connected component of $\widetilde{E}_n^{M,m}$ until we reached an element of $E$.
Since $\widetilde{E}_n^{M,m} \subset \widetilde{E}_n^{M',m}$, the element of $E$ that we reach in $\widetilde{E}_n^{M',m}$ cannot come after the one in $\widetilde{E}_n^{M,m}$, which gives $E_n^{M,m} \subset E_n^{M',m}$.
We will also note $\gls*{E_front} = E \Delta (E+1)$ where $\Delta$ denotes the symmetric difference.
We now prove some general properties of these sets.

\begin{lem}
\label{lem:EnM}
For $M,n,m \in \bbZ_0^+$, the set $E_n^{M,m}$ satisfies the following properties:
\begin{itemize}
\item[i)] $\partial E_n^{M,m} \subset E$
\item[ii)] $\limsup\limits_{n \to + \infty} d_n(\partial E_n^{M,m}) \leq \frac{2}{M}$
\item[iii)] $M \frac{\# \partial E_n^{M,m}}{2} \leq n + M$
\item[iv)] For $M' \geq M$, $\# (E_n^{M',m} \backslash E_n^{M,m})
\geq M \frac{\# \partial E_n^{M,m} - \# \partial E_n^{M',m}}{2}$
\end{itemize}
\end{lem}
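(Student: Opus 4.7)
The whole proof will rest on the structural observation that $E_n^{M,m}$ is a disjoint finite union of half-open intervals $[\![a_j; b_j[\![$ with $a_j, b_j \in E$. Writing $e_1 < \dots < e_K$ for the elements of $E \cap [\![0; n[\![$ and $g_i = e_{i+1} - e_i$, a connected component of $E_n^M$ has the form $[\![e_{i_0}; e_{i_1}[\![$ where $i_0, i_1$ delimit a maximal run of indices with $g_i \leq M$; the $m$-trimming merely moves the right endpoint backwards from $e_{i_1}$ to some $e_{i_1 - t} \in E$ (or drops the component entirely when its size is less than $m-1$). Item (i) will then follow at once from the elementary identity $A \Delta (A+1) = \{a_j, b_j\}_j$ valid for any disjoint union $A = \bigsqcup_j [\![a_j; b_j[\![$.

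For (ii) and (iii), let $K$ denote the number of components of $E_n^{M,m}$, so that $\# \partial E_n^{M,m} = 2K$. The key gap estimate will be that two consecutive components, ending at $b_j = e_{i_1 - t}$ and starting at $a_{j+1} = e_{i_0'}$, satisfy $a_{j+1} - b_j \geq e_{i_0'} - e_{i_1} > M$: the last inequality is the very reason $e_{i_1}$ and $e_{i_0'}$ belong to distinct $E_n^M$-components, and trimming only enlarges the gap. Since the $K-1$ inter-component gaps fit inside $[\![0; n[\![$, this gives $n \geq (K-1)M$, and hence (iii): $M \cdot \# \partial E_n^{M,m}/2 = MK \leq n + M$. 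Dividing by $n$ and letting $n \to \infty$ yields (ii).

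For (iv), I would write $J$ for the number of components of $E_n^{M',m}$. Since $E_n^{M,m} \subset E_n^{M',m}$ (the monotonicity noted in the excerpt), each component of $E_n^{M,m}$ is contained in a unique component $D$ of $E_n^{M',m}$; letting $k_D$ denote the number of $E_n^{M,m}$-components inside $D$, one has $\sum_D k_D = K$ and $\sum_D 1 = J$. Applying the gap estimate of the previous paragraph \emph{inside} $D$ gives $k_D - 1$ gaps of length $\geq M$ in $D \setminus E_n^{M,m}$ when $k_D \geq 1$, hence $\#(D \cap (E_n^{M',m} \setminus E_n^{M,m})) \geq M(k_D - 1)$; this bound is trivial when $k_D = 0$ since the right-hand side is then negative. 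Summing over $D$ and using $\# \partial E_n^{M,m} - \# \partial E_n^{M',m} = 2(K - J)$ yields (iv).

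The only point deserving genuine care will be making sure that the inter-component gap bound survives all pathologies of the $m$-trimming procedure: small $E_n^M$-components being dropped, different trimming lengths at the parameters $M$ and $M'$, and components of $E_n^{M',m}$ containing no $E_n^{M,m}$-component at all. All of these operations only \emph{lengthen} the relevant gaps, so the single observation $e_{i_1 - t} \leq e_{i_1}$ will handle every case uniformly, after which (i)--(iv) are a matter of elementary counting.
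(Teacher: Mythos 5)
Your argument is correct, and for items i)--iii) it is essentially the paper's proof: the boundary of a disjoint union of blocks $[\![ a_j ; b_j [\![$ with $a_j, b_j \in E$ is exactly the set of endpoints, and the estimate $M \, \# \partial E_n^{M,m} / 2 \leq n + M$ rests on the fact that consecutive components of $E_n^{M,m}$ are separated by more than $M$ integers (the paper packs the $K$ forward blocks $[\![ b_j ; b_j + M [\![$ into $[\![ 0 ; n+M [\![$, you pack the $K-1$ gaps into $[\![ 0 ; n [\![$; same count). For iv) your bookkeeping genuinely differs: you group the components of $E_n^{M,m}$ by the component $D$ of $E_n^{M',m}$ containing them and count the $k_D - 1$ internal gaps, each of size at least $M$ and contained in $D \setminus E_n^{M,m}$, which sums directly to $M(K-J) = M(\# \partial E_n^{M,m} - \# \partial E_n^{M',m})/2$; the paper instead attaches the forward block $[\![ b_j ; b_j + M [\![$ to every component whose right endpoint avoids $\partial E_n^{M',m}$ and asserts that this block lies inside $E_n^{M',m} \setminus E_n^{M,m}$. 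Your version is the more robust of the two: the paper's asserted inclusion can fail when the trimming at parameter $M'$ pulls the right endpoint of the ambient component of $E_n^{M',m}$ back to a point of $E$ lying strictly between $b_j$ and $b_j + M$ (for instance $E \cap [\![ 0 ; n [\![ = \{0,5,10,15,20,24,25,45\}$ with $m=3$, $M=10$, $M'=30$ gives $E_n^{M,m} = [\![ 0 ; 20 [\![$, $E_n^{M',m} = [\![ 0 ; 25 [\![$, and $[\![ 20 ; 30 [\![ \not\subset E_n^{M',m}$, even though $20 \notin \partial E_n^{M',m}$); in such configurations the inequality of iv) still holds because the boundary counts then leave slack, and your gap count inside each $D$ never meets the issue. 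The one fact you use throughout --- that consecutive surviving components are separated by more than $M$, whatever the trimming does --- is justified exactly as you say, since trimming only moves right endpoints of $E_n^M$-components to the left and can only delete components, so it only enlarges gaps; your parenthetical description of exactly when a component is dropped is not quite the definition's criterion, but nothing in your counting depends on it.
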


\begin{proof}
To prove $i)$, notice that the set $E_n^{M,m}$ is a union of intervals $[\![ k : l [\![$ whose boundary $\{ k ; l \}$ lies in $E$ and that for any $A,B \subset \bbZ_0^+$, we have $\partial(A \cup B) \subset \partial A \cup \partial B$.\\

Then, $ii)$ follows from $iii)$, so we only prove $iii)$ and $iv)$. To do so, we consider the following figure:
\vspace{1em}

\begin{figure}[h!]
\centering
\includegraphics[width=32em]{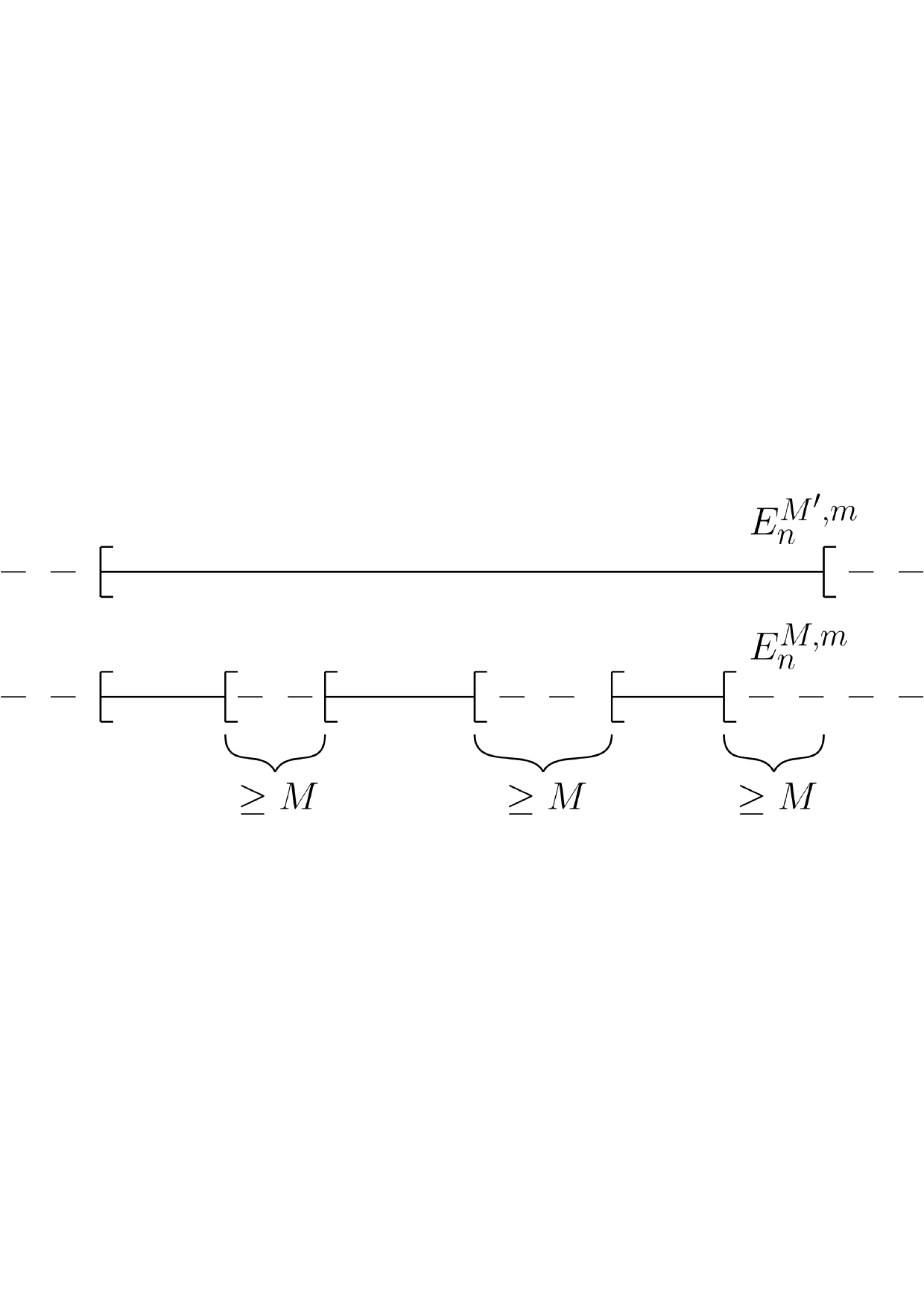}
\vspace{0.5em}
\caption{A visualization of the sets $E_n^{M,m}$ and $E_n^{M',m}$}
\vspace{1.5em}
\end{figure}

- Proof of $iii)$: For each connected component $[\![ k ; l [\![$ of $E_n^{M,m}$, consider the interval $[\![ l ; l+M [\![$ contained in $[\![ 0 ; n + M [\![$. This gives $\frac{\# \partial E_n^{M,m}}{2}$ disjoint intervals of length $M$ inside $[\![ 0 ; n + M [\![$.\\

- Proof of $iv)$: For each connected component $[\![ k ; l [\![$ of $E_n^{M,m}$ such that $l \notin \partial E_n^{M',m}$, consider the set $[\![ l ; l+M [\![$ contained in $E_n^{M',m} \backslash E_n^{M,m}$.
This gives at least $\frac{\# \partial E_n^{M,m} - \# \partial E_n^{M',m}}{2}$ disjoint intervals of length $M$ inside $E_n^{M',m} \backslash E_n^{M,m}$.
\end{proof}

In what follows, $E$ will be the set of geometric times of some point.
Let $p \in \bbZ^+$. Let $\varepsilon := \varepsilon_p > 0$ be given by the \hyperref[lem:RL]{Reparametrization Lemma} for $g = f^p$.
Let $\sigma$ be an $\varepsilon$-bounded reparametrization.
For $x \in \sigma_*$, $n, M,m \in \bbZ_0^+$, we note
$\gls*{EnMmx} = (E_p(x))_n^{M,m}$, and we let $E^{M,m}(x)$ be the non-decreasing limit in $n$ of $E_n^{M,m}(x)$. Notice that we may have $E^{M,m}(x) \cap [\![ 0 ; n [\![ \: \neq E_n^{M,m}(x)$, although these sets are equal up to the last $2M+m$ elements.\\

In the next statement, the term ''$\log 10$'' will only be of use to prove that the Lyapunov exponent of $g$ is strictly positive $\mu$-almost everywhere (Proposition \autoref{prop:expo-pos}).

\begin{lem}
\label{lem:tpsHB}
For $x \in \sigma_*, n, M,m \in \bbZ_0^+$, we have the following properties:
\begin{itemize}
\item[i)] For $k < l$ and $l \in E(x)$, we have
$$
\log |(g^{l-k})'(g^k x)| \geq (l-k) \log 10 \geq 0
$$
\item[ii)] If $[\![ a ; b [\![$ is a connected component of $E_n^{M,m}(x)$, then
$$
\log |(g^{b-a})'(g^a x)| \geq (b-a) \log 10 \geq 0
$$
\item[iii)] For $[\![ k ; l [\![ \subset E_n^{M,m}(x)$, we have
$$
\log |(g^{l-k})'(g^k x)| \geq (l-k) \log 10 - M \log || g' ||_{\infty} \geq - M \log || g' ||_{\infty}
$$
\end{itemize}
\end{lem}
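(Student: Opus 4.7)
The plan is to prove (i) directly from the reparametrization tree, then derive (ii) and (iii) as short corollaries.

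For (i), I would unravel $l \in E(x)$ to obtain $i^l \in \overline{\mathcal{T}_l}$ and $t_0 \in [-1/3, 1/3]$ with $x = (\sigma \circ \theta_{i^l})(t_0)$. Item 3) of the \hyperref[lem:RL]{Reparametrization Lemma} combined with the distortion estimate (Lemma \autoref{lem:distortion}) on the $\varepsilon$-bounded reparametrization $g^l \circ \sigma \circ \theta_{i^l}$ gives the uniform lower bound $|(g^l \circ \sigma \circ \theta_{i^l})'(t_0)| \geq \varepsilon/9$. The key step is then to factor $\theta_{i^l} = \theta_{i^l_k} \circ \phi$, where $\phi = \varphi_{i^l_{k+1}} \circ \cdots \circ \varphi_{i^l}$ is a composition of $l-k$ affine contractions of rate at most $1/100$ (item 2), so that $|\phi'| \leq 100^{-(l-k)}$. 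Writing $g^k \circ \sigma \circ \theta_{i^l} = (g^k \circ \sigma \circ \theta_{i^l_k}) \circ \phi$ and using the $\varepsilon$-boundedness of the ancestor $g^k \circ \sigma \circ \theta_{i^l_k}$, the chain rule yields $|(g^k \circ \sigma \circ \theta_{i^l})'(t_0)| \leq \varepsilon \cdot 100^{-(l-k)}$. Dividing these two estimates gives $|(g^{l-k})'(g^k x)| \geq 100^{l-k}/9$, so $\log |(g^{l-k})'(g^k x)| \geq 2(l-k) \log 10 - \log 9 \geq (l-k) \log 10$ whenever $l - k \geq 1$.

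For (ii), Lemma \autoref{lem:EnM}(i) ensures that the endpoints $a, b$ of any connected component of $E_n^{M,m}(x)$ belong to $E(x)$, so (i) applied at $k = a$ and $l = b$ immediately gives the claim. For (iii), given $[\![ k ; l [\![ \: \subset E_n^{M,m}(x)$, I would pick the smallest $\tilde l \in E(x) \cap [l, +\infty)$; since $l-1 \in E_n^M(x)$, the definition of $E_n^M$ forces $\tilde l - l \leq M - 1$. Applying (i) at $k$ and $\tilde l$ and then peeling off the iterates between $l$ and $\tilde l$ via the trivial bound $|(g^{\tilde l - l})'(g^l x)| \leq || g' ||_{\infty}^{\tilde l - l}$, one obtains $\log |(g^{l-k})'(g^k x)| \geq (\tilde l - k) \log 10 - (\tilde l - l) \log || g' ||_{\infty}$, which after rewriting $\tilde l - k = (l-k) + (\tilde l - l)$ and using $\tilde l - l \leq M$ yields the required $(l-k)\log 10 - M \log || g' ||_{\infty}$.

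The main subtlety lies in the factorization in (i): bounding $|(g^k \circ \sigma \circ \theta_{i^l})'(t_0)|$ by $\varepsilon$ via the $\varepsilon$-boundedness of $g^k \circ \sigma \circ \theta_{i^l}$ itself would only give a constant lower bound on $|(g^{l-k})'(g^k x)|$. The point of passing through the $\varepsilon$-bounded ancestor $g^k \circ \sigma \circ \theta_{i^l_k}$ and extracting the $l-k$ contraction factors bundled in $\phi$ is precisely what converts the tree's accumulated contraction into exponential expansion of $g^{l-k}$, and this is what turns geometric times into hyperbolic times in the sense of Alves--Bonatti--Viana.
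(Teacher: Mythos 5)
Your proposal is correct and takes essentially the same route as the paper's proof: (i) is obtained by comparing the lower bound on $|(g^l\circ\sigma\circ\theta_{i^l})'|$ coming from item 3) of the Reparametrization Lemma and the distortion inequality with the upper bound $\varepsilon\cdot 100^{-(l-k)}$ coming from the $\varepsilon$-bounded ancestor at level $k$ and the $l-k$ affine contractions, giving $100^{l-k}/9\geq 10^{l-k}$; (ii) follows from $\partial E_n^{M,m}(x)\subset E(x)$; and (iii) follows by passing to the first geometric time $\geq l$, which lies within $M$ of $l$, and paying at most $M\log\|g'\|_{\infty}$. The only cosmetic difference is that you phrase (iii) multiplicatively via the chain rule while the paper telescopes sums of $\log|g'(g^i x)|$, and you justify the bound $\tilde l-l<M$ slightly more explicitly from the definition of $E_n^M$.
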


\begin{proof}
\hspace{1em}- Proof of $i)$:
By definition of $E(x)$, we may write $x = \sigma \circ \theta_{i^l}(t)$ with $\theta_{i^l} = \theta_{i^k} \circ \varphi_{i_k^l}$ and $i^l \in \overline{\mathcal{T}_l}$ (i.e. the branch of $\mathcal{T}_l$ that ends at $i^l$ passes through some $i^k$, and we note $\varphi_{i_k^l}$ the contraction that goes from $i^k$ to $i^l$).
We hence have
\begin{align*}
|(g^{(l-k)})'(g^k x) | = \frac{| (g^l)'(x) |}{| (g^k)'(x) |} &= \frac{| (g^l \circ \sigma \circ \theta_{i^l})'(t) |}{| (g^k \circ \sigma \circ \theta_{i^k})'(\varphi_{i_k^l}(t)) | \times | (\varphi_{i_k^l})'(t) |}\\
\text{1) and 2) from the \hyperref[lem:RL]{Reparametrization Lemma}\hspace{2em}}&\geq \frac{2}{3} \frac{| (g^l \circ \sigma \circ \theta_{i^l})'(0) |}{\varepsilon} 100^{l-k}\\
\text{3) from the \hyperref[lem:RL]{Reparametrization Lemma}\hspace{2em}}&\geq \frac{2}{3} \frac{1}{6} 100^{l-k}\\
&\geq 10^{l-k}
\end{align*}

\hspace{1em}-  Proof of $ii)$:
From item $i$) from Lemma \autoref{lem:EnM}, we have that $\partial E_n^{M,m}(x) \subset E(x)$, so we apply $i)$.\\

\hspace{1em}- Proof of $iii)$:
Let $b$ be the first element of $E(x)$ that is superior or equal to $l$.
By definition of $E_n^{M,m}(x)$, we have $|b-l| < M$.
Therefore, we get
\begin{align*}
\log |(g^{l-k})'(g^k x)|
&= \left ( \sum\limits_{i \in [\![ k ; b [\![} - \sum\limits_{i \in [\![ l ; b [\![} \right ) \log |g'(g^i x)|\\
\underset{i)}&{\geq} (b-k) \log 10 - M \log || g' ||_{\infty}\\
&\geq (l-k) \log 10 - M \log || g' ||_{\infty}
\end{align*}
\end{proof}

\section{The empirical measure}
\label{sec:mes-emp}

We may assume that the set $\gls*{A} := \{ \chi > R(f) /r \}$ is of positive Lebesgue measure.
By taking a subset of $A$, still of positive Lebesgue measure, we may assume that we have some 
$\gls*{B} > R(f) / r$ such that
$$
\forall x \in A, \chi(x) > b
$$
Now fix 
$\gls*{P} \geq p_0$, 
$\gls*{Beta} > 0$ and 
$\gls*{Eps} > 0$ given by Proposition \autoref{prop:dens-pos}, and fix 
$\gls*{Sigma}$ an $\varepsilon$-bounded reparametrization such that $Leb(A \cap \sigma_*) > 0$.
Note 
$\gls*{Ex} = E_p(x)$ for $x \in \sigma_* \cap A$ and
$\gls*{G}= f^p$.
From Proposition \autoref{prop:dens-pos}, one can take a subset of $A$ of positive Lebesgue measure and assume that for $n$ large enough
$$
\forall x \in A \cap \sigma_*, | (g^n)'(x)| \geq e^{npb} \Rightarrow d_{n}(E(x)) > \beta
$$
We will use the notation $E_n^{M,m}(x)$ defined in the previous section, for $M,n,m \in \bbZ_0^+$.
Lastly, if $J$ is a subset of $I$, we will denote by $\gls*{Leb_norm}$ the normalized Lebesgue measure on $J$, i.e. $Leb_J( \cdot ) = \frac{Leb( \cdot \: \cap J )}{Leb(J)}$.

\subsection{Definition of the empirical measure}

\begin{lem}
\label{lem:def-An}
There is a sequence of positive integers $\gls*{N}$ and measurable sets $(\gls*{A_n} \subset A \cap \sigma_*)_{n \in \mathcal{n}}$ such that
\begin{itemize}
\item[i)] $\frac{1}{n}\log Leb(A_n) \underset{\mathcal{n} \ni n \to \infty}{\longrightarrow} 0$
\item[ii)] For any $m \in \bbZ_0^+$ and for $M$ large enough, we have that $\left ( \int d_n(E_n^{M,m}(x)) \: d Leb_{A_n}(x) \right )_{n \in \mathcal{n}}$ converges to $\beta_m^M > \beta$, where $\beta_m^M \underset{M \to \infty}{\nearrow} \beta^{\infty}$ which does not depend on $m$
\item[iii)] For any $m \in \bbZ_0^+, \limsup\limits_{\mathcal{n} \ni n \to +\infty} M \int d_n(\partial E_n^{M,m}(x)) d Leb_{A_n}(x) \underset{M \to +\infty}{\longrightarrow} 0$
\item[iv)] Define 
$\gls*{EnMm_ronde}$ as the partition whose atoms are maximal sets of positive $Leb_{A_n}$-measure where $x \mapsto E_n^{M,m}(x)$ is constant. Then
$\limsup\limits_{\mathcal{n} \ni n \to +\infty}\frac{1}{n} H_{Leb_{A_n}}(\mathcal{E}_n^{M,m})\underset{M \to +\infty}{\longrightarrow} 0$, where $H_{\lambda}(\mathcal{P})$ denotes the entropy of a partition $\mathcal{P}$ for a measure $\lambda$, defined as $\sum\limits_{P \in \mathcal{P}} - \lambda(P) \log \lambda(P)$.
\end{itemize}
\end{lem}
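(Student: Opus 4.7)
The plan is to define $A_n \subset A \cap \sigma_*$ by combining Proposition 3.1 with the hypothesis $\chi > b$ on $A$, and then to extract a subsequence $\mathcal{n}$ on which all relevant integrals converge. The hypothesis $\chi(x) > b$ for $x \in A$ guarantees $|(f^n)'(x)| \geq e^{nb}$ infinitely often, while Proposition 3.1 implies that outside an exponentially small exceptional set, $d_n(E(x)) \geq \beta$ whenever $|(f^n)'(x)| \geq e^{nb}$; a Borel-Cantelli argument then gives a full-measure subset of $A \cap \sigma_*$ on which there are infinitely many $n$ with $d_n(E(x)) \geq \beta$. The sets $A_n$ are built to record this good density at time $n$; a subsequence extraction produces $\mathcal{n}_0$ along which $Leb(A_n)$ decays subexponentially (property (i)), and a further diagonal extraction over $(M, m) \in (\bbZ_0^+)^2$ yields $\mathcal{n} \subset \mathcal{n}_0$ along which each sequence $\int d_n(E_n^{M,m})\,dLeb_{A_n}$ and $\int d_n(\partial E_n^{M,m})\,dLeb_{A_n}$ converges, to limits $\beta_m^M$ and $\gamma_m^M$.

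Property (ii) has three parts. Monotonicity of $E_n^{M,m}$ in $M$ transfers to monotonicity of $\beta_m^M$, so $\beta_m^M \nearrow \beta_m^\infty$. The $m$-independence of the limit follows from observing that $E_n^{M,m}$ is obtained from $E_n^{M,1}$ by removing at most $m-1$ additional integers per connected component, and Lemma 4.1 (iii) bounds the number of components by $(n+M)/M$, giving $|\beta_m^M - \beta_1^M| \leq (m-1)/M \to 0$ as $M \to \infty$, so $\beta^\infty := \lim_M \beta_m^M$ is well-defined and does not depend on $m$. The strict inequality $\beta_m^M > \beta$ for $M$ large comes from the fact that, for $x \in A_n$, once $M$ exceeds the typical gap between consecutive geometric times, the set $E_n^M(x)$ strictly contains $E(x) \cap [0, n)$ and is augmented by the fill-in intervals, producing a uniform positive excess over $\beta$; the subsequent passage to $E_n^{M,m}$ only loses an $O(1/M)$ amount in density.

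For (iii), integrate Lemma 4.1 (iv) against $Leb_{A_n}$ and pass to the limit along $\mathcal{n}$: for $M' > M$,
$$\frac{M}{2}(\gamma_m^M - \gamma_m^{M'}) \leq \beta_m^{M'} - \beta_m^M;$$
letting $M' \to \infty$ and using $\gamma_m^{M'} \leq 2/M' \to 0$ from Lemma 4.1 (iii) together with $\beta_m^{M'} \to \beta^\infty$, one obtains $M\,\gamma_m^M \leq 2(\beta^\infty - \beta_m^M)$, which tends to $0$ as $M \to \infty$. For (iv), each atom of $\mathcal{E}_n^{M,m}$ is determined by its set of endpoints $\partial E_n^{M,m}(x)$, a subset of $[0, n)$ of cardinality at most $2(n+M)/M$ by Lemma 4.1 (iii); hence $\#\mathcal{E}_n^{M,m} \leq \sum_{k \leq (n+M)/M} \binom{n}{2k}$, and the binomial estimate $\binom{n}{\alpha n} \leq e^{nH(\alpha)}$ (with $H$ the binary entropy) yields $\frac{1}{n} H_{Leb_{A_n}}(\mathcal{E}_n^{M,m}) \leq \frac{1}{n}\log \#\mathcal{E}_n^{M,m} \leq H(2/M) + o_n(1)$, which tends to $0$ as $M \to \infty$.

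The main obstacle is the precise construction of $A_n$ ensuring simultaneously property (i) (subexponential Lebesgue measure) and a density bound strong enough to imply the strict inequality $\beta_m^M > \beta$ in (ii); a naive definition demanding both the density bound and the pointwise expansion $|(g^n)'(x)| \geq e^{npb}$ at time $n$ would force $Leb(A_n) \leq e^{-npb}$ and violate (i), so the definition must track density along the orbit without fixing the exact time of large expansion, and the strict inequality itself depends on exploiting the robust fill-in structure rather than merely $d_n(E) \geq \beta$, which can be saturated by clustered configurations.
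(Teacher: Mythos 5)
The central gap is the construction of $A_n$, which your proposal never actually pins down. Your stated reason for rejecting the natural definition is incorrect: requiring both $d_n(E(x)) > \beta$ and $|(g^n)'(x)| \geq e^{npb}$ at the same time $n$ does \emph{not} force $Leb(A_n) \leq e^{-npb}$ --- a large derivative at time $n$ on a set says nothing about that set's Lebesgue measure (for a uniformly expanding map one has $|(g^n)'| \geq e^{npb}$ on all of $I$). The exponentially small quantity you have in mind is the change-of-variable bound $Leb\left((\sigma \circ \theta_{i^m})_* \cap \{ |(f^n)'| \geq e^{nb} \}\right) \leq 2 \varepsilon e^{-nb}$, which holds on a single reparametrized piece, not globally. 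The paper uses exactly the definition you discard: $A_n = \{ x \in A \cap \sigma_* \mid d_n(E(x)) > \beta \text{ and } |(g^n)'(x)| \geq e^{npb} \}$, takes $\mathcal{n} = \{ n \mid Leb(A_n) \geq n^{-2} \}$, and shows $\mathcal{n}$ is infinite by Borel--Cantelli, because every $x \in A$ lies in infinitely many $A_n$ (here one uses the reduction made just before the lemma, via Proposition \autoref{prop:dens-pos}, so that the expansion condition alone already forces the density condition); item $i)$ is then immediate. Since your $A_n$ is left unspecified, the subsequent diagonal extraction and the limits $\beta_m^M$, $\gamma_m^M$ rest on an undefined object, even though that part of the scheme is otherwise the same as the paper's.

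Two steps of your item $ii)$ are also not correct as written. First, $E_n^{M,m}$ is not obtained from $E_n^{M}$ by deleting at most $m-1$ integers per connected component: by definition one deletes the last $L$ elements with $m-1 \leq L \leq M+m-2$, continuing until the right endpoint falls back into $E$, so the per-component loss can be of order $M$; your bound $|\beta_m^M - \beta_1^M| \leq (m-1)/M$ does not follow, and with the crude component count $(n+M)/M$ one only gets a difference of order $1$. The paper instead bounds the loss by $(m'+M)\,\#\partial E_n^{M}(x)/2$ and proves item $iii)$ \emph{first} (it only needs the convergence of $(\beta_m^M)_M$, which follows from monotonicity in $M$), obtaining $\limsup_{\mathcal{n} \ni n} M \int d_n(\partial E_n^{M}(x))\, dLeb_{A_n}(x) \to 0$, which is strictly better than the trivial $2/M$ and is what makes the $m$-independence work. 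Second, for the strict inequality $\beta_m^M > \beta$, your ``uniform positive excess from the fill-in'' is unjustified and can fail: if the geometric times come in long blocks separated by gaps larger than $M$, the density of $E_n^{M,m}$ is slightly \emph{below} that of $E$, not above. What the estimates really give is $\beta_m^M \geq \beta - o_M(1)$ (again using item $iii)$ to control the $(m+M)\,\partial E_n^{M}$ correction), and the paper then simply replaces $\beta$ by a slightly smaller positive constant, which is harmless since only its positivity is used afterwards. Your items $iii)$ and $iv)$ coincide with the paper's arguments and are fine.
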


\begin{proof}
For $n \in \bbZ_0^+$, define $A_n = \{ x \in A \cap \sigma_* \mid d_{n}(E(x)) > \beta \text{ and } |(g^n)'(x)| \geq e^{npb} \}$.
Then let $\mathcal{n}$ be the sequence of integers in the set $\{ n \in \bbZ_0^+ \mid Leb(A_n) \geq \frac{1}{n^2} \}$.
Since the Lyapunov exponent of $f$ on $A$ is larger than $b$, every point of $A$ is in infinitely many $A_n$.
By using Borel-Cantelli's lemma, this implies that $\mathcal{n}$ is infinite.\\

\hspace{1em}- Proof of $i)$:
Simply write $\log Leb(A_n) \geq - 2 \log n$ for $n \in \mathcal{n}$.\\

\hspace{1em}- Proof of $ii)$:
Once we obtain the convergence in $n$, the fact that $( \beta_m^M )_M$ is non-decreasing comes from the fact that $E_n^{M,m}(x)$ is non-decreasing in $M$.
We first prove the convergence in $n$.
Using Cantor's diagonal argument, we have convergence along a subsequence of $\mathcal{n}$ independent of $M$, and we may assume that it is in fact along $\mathcal{n}$.
We prove that $\beta_m^M > \beta$ and that the limit of $(\beta_m^M)_M$ does not depend on $m$ after the proof of item $iii)$, which only uses the convergence of $(\beta_m^M)_M$.\\

\hspace{1em}- Proof of $iii)$: 
Let $M \leq M' \in \bbZ_0^+, n \in \mathcal{n}$ and $x \in A_n$.
From Lemma \autoref{lem:EnM}, we obtain
\begin{align*}
M \frac{\# \partial E_n^{M,m}(x)}{2}
\underset{\text{Lemma \autoref{lem:EnM}}.iv)}&{\leq} \# (E_n^{M',m}(x) \backslash E_n^{M,m}(x)) +  M \frac{\# \partial E_n^{M',m}(x)}{2}\\
\underset{\text{Lemma \autoref{lem:EnM}}.iii)}&{\leq} \# E_n^{M',m}(x) - \# E_n^{M,m}(x) + n \frac{M}{M'} + M
\end{align*}
Dividing by $n$, we get
$$
M d_n(\partial E_n^{M,m}(x)) \leq 2 \left ( d_n(E_n^{M',m}(x)) - d_n(E_n^{M,m}(x)) + \frac{M}{M'} + \frac{M}{n} \right )
$$
By integrating and letting $n \to +\infty$, we get for any $M \leq M' \in \bbZ_0^+$
$$
\limsup\limits_{\mathcal{n} \ni n \to +\infty} M \int d_n(\partial E_n^{M,m}(x)) d Leb_{A_n}(x)
\leq  2 \left ( \beta_m^{M'} - \beta_m^M + \frac{M}{M'} \right )
$$
We conclude by letting $M' \to +\infty$ then $M \to +\infty$ and using the convergence in item $ii)$.\\

\hspace{1em}- Proof of $ii)$, second part:
We prove that $\beta_m^M > \beta$ for $M$ large enough and that $\beta^{\infty}$ does not depend on $m$.
From the definition of $A_n$, we have
$$
\forall n \in \mathcal{n}, \forall x \in A_n, d_n(E(x)) > \beta
$$
Then, we have that $\# E_n^M(x) \geq \# E(x) - \frac{n+M}{M}$
and that $\# E_n^{M,m}(x) \geq \# E_n^M(x) - (m+M) \frac{\# \partial E_n^M(x)}{2}$.
Hence, for any $M' \geq M$, the previous inequalities give
\begin{align*}
\lim\limits_{n \in \mathcal{n}} \int d_n(E_n^{M,m}(x)) d Leb_{A_n}(x) &\geq \limsup\limits_{n \in \mathcal{n}}  \int d_n(E_n^M(x)) - (m+M) \frac{d_n( \partial E_n^M(x))}{2} d Leb_{A_n}(x)\\
&\geq \beta - \frac{1}{M} - \frac{m+M}{M} \left ( \beta_1^{M'} - \beta_1^M + \frac{M}{M'} \right )
\end{align*}
Therefore, letting $M'$ go to infinity, taking $M$ large enough, and changing $\beta$ to a smaller one gives $\beta_m^M \geq \beta$.
To prove that $\beta^{\infty}$ does not depend on $m$, we write that for any $m \leq m'$, we have
$$
\# E_n^{M,m'}(x) \leq \# E_n^{M,m}(x) \leq \# E_n^{M,m'}(x) + (m'+M) \frac{\# \partial E_n^{M}(x)}{2}
$$

\hspace{1em}- Proof of $iv)$:
Notice that any $E \subset \bbZ_0^+$ is uniquely determined by $\partial E$. Let $F_n^{M,m} = \max\limits_{x \in A_n} \# \partial E_n^{M,m}(x)$, so
\begin{align*}
H_{Leb_{A_n}}(\mathcal{E}_n^{M,m}) &\leq \log \sum\limits_{k=0}^{F_n^{M,m}} {n\choose{k}}\\[0.25em]
&\leq \log \sum\limits_{k=0}^{F_n^{M,m}} {n\choose{F_n^{M,m}}} {{F_n^{M,m}}\choose{k}}\\[0.25em]
&= \log \left ( 2^{F_n^{M,m}} {n\choose{F_n^{M,m}}} \right)\\[0.25em]
\underset{\text{(3.1) page \pageref{ineq:3-1}}}&{\leq} F_n^{M,m} \left ( 1 + \log 2 + \log \frac{n}{F_n^{M,m}} \right )
\end{align*}
Then item $ii)$ from Lemma \autoref{lem:EnM} concludes.
\end{proof}

For any $n \in \mathcal{n}$ and $M \in \bbZ_0^+$, we follow the approach from \cite{BCS_cont} and \cite{burguet2024srbPH} and define the empirical probability measure as follows:
$$
\gls*{MunMm} := \frac{\int \sum\limits_{i \in E_n^{M,m}(x)} \delta_{g^i x} \: d Leb_{A_n}(x)}{\int \# E_n^{M,m}(x) \: d Leb_{A_n}(x)}
$$
We will also use the following measure, which corresponds to $\mu_n^M$ with a different normalization
$$
\gls*{NunMm} := \frac{1}{n \beta^{\infty}} \int \sum\limits_{i \in E_n^{M,m}(x)} \delta_{g^i x} \: d Leb_{A_n}(x)
$$
We will also note $\gls*{MunM} = \mu_n^{M,1}$ and $\gls*{NunM} = \nu_n^{M,1}$.

\subsection{Convergence of the empirical measure}

To establish the convergence of these measures, one may simply extract some subsequence that would converge in the weak-$*$ topology.
However, the forthcoming arguments will require some stronger type of convergence.
More precisely, we will use the fact that $\nu_n^{M,m}$ is non-decreasing in $M$ to prove that $\left ( \nu^{M,m} \right )_M$ converges in total variation.\\

Yet, in the following, if we talk about convergence of measures without precising the topology, it will always be in the weak-$*$ topology.

\begin{prop}
\label{prop:CV-mes}
By taking a subsequence of $\mathcal{n}$, we may assume that
\begin{itemize}
\item[i)] For any $M,m \in \bbZ_0^+$, $(\mu_n^{M,m})_n$ and $(\nu_n^{M,m})_n$ converge along $\mathcal{n}$.
\item[ii)] For $M,m \in \bbZ_0^+$, note 
$\gls*{MuM}$ and 
$\gls*{NuM}$ the respective limits from i). Then $(\mu^{M,m})_M$ and $(\nu^{M,m})_M$ both converge to a $g$-invariant borelian probabilty
$\gls*{Mu}$, which does not depend on $m$.
\item[iii)] For any borelian $B$ and $m \in \bbZ_0^+$, $\nu^{M,m}(B) \underset{M \to \infty}{\nearrow} \mu(B)$.
\end{itemize}
\end{prop}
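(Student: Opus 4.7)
For $(i)$, the measures $\mu_n^{M,m}$ are probabilities and $\nu_n^{M,m}$ have total mass bounded by $1$ (with $\nu_n^{M,m}(I) \to \beta_m^M/\beta^\infty \le 1$ by Lemma \ref{lem:def-An}$(ii)$), so by Banach--Alaoglu on $C(I)^*$ (using compactness of $I$) each has a weak-$*$ convergent subsequence; the index set $(M,m) \in (\bbZ_0^+)^2$ being countable, a standard diagonal extraction produces a single subsequence of $\mathcal{n}$ (which I relabel $\mathcal{n}$) along which both families converge weak-$*$ for every $(M,m)$.

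For $(ii)$ and $(iii)$, the key structural input is that $E_n^{M,m}(x) \subset E_n^{M',m}(x)$ for $M \le M'$, so $\nu_n^{M,m} \le \nu_n^{M',m}$ as positive Borel measures. Passing to the weak-$*$ limit in $n$ preserves this partial order, hence $(\nu^{M,m})_M$ is a non-decreasing family of positive Borel measures on $I$ with $\nu^{M,m}(I) = \beta_m^M/\beta^\infty \nearrow 1$. The monotone convergence theorem for measures then produces a probability $\mu^m(B) := \sup_M \nu^{M,m}(B)$ reached \emph{setwise} on every Borel set $B$, which already contains $(iii)$ modulo $m$-independence. To eliminate the $m$-dependence, I would bound in total variation
\begin{align*}
\|\nu_n^{M,m} - \nu_n^{M,m'}\|_{TV}
&\le \frac{\max(m,m')+M}{2\, n \beta^\infty}\int \#\partial E_n^M(x)\, dLeb_{A_n}(x),
\end{align*}
using $\bigl|\#E_n^{M,m}(x) - \#E_n^{M,m'}(x)\bigr| \le \tfrac{1}{2}(\max(m,m')+M)\, \#\partial E_n^M(x)$, exactly as in the proof of Lemma \ref{lem:def-An}$(ii)$; letting $n \to \infty$ along $\mathcal{n}$ and then $M \to \infty$, Lemma \ref{lem:def-An}$(iii)$ forces this bound to vanish, so $\mu := \mu^m$ is independent of $m$. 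Since $\mu_n^{M,m}$ and $\nu_n^{M,m}$ differ by the scalar $n\beta^\infty / \int \#E_n^{M,m}\, dLeb_{A_n}$ which tends to $\beta^\infty/\beta_m^M$ (Lemma \ref{lem:def-An}$(ii)$) and then to $1$ as $M \to \infty$, the same limit yields $\mu^{M,m} \to \mu$ in weak-$*$.

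The $g$-invariance of $\mu$ follows from the identity $E_n^{M,m}(x) \,\Delta\, (E_n^{M,m}(x)+1) = \partial E_n^{M,m}(x)$, which gives, for every $\phi \in C(I)$,
\begin{align*}
\bigl|\mu_n^{M,m}(\phi \circ g - \phi)\bigr|
&\le \frac{2\,\|\phi\|_\infty \int \#\partial E_n^{M,m}(x)\, dLeb_{A_n}(x)}{\int \#E_n^{M,m}(x)\, dLeb_{A_n}(x)},
\end{align*}
and whose right-hand side tends to $0$ as $n \to \infty$ along $\mathcal{n}$ and then $M \to \infty$ by Lemma \ref{lem:def-An}$(ii)$ (denominator) and $(iii)$ (numerator); passing to the limit yields $\mu(\phi \circ g) = \mu(\phi)$. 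The only delicate step in the whole argument is not this invariance computation but rather the promotion of the $M$-monotonicity from the pre-limit measures $\nu_n^{M,m}$ to a \emph{setwise} monotone convergence of the weak-$*$ limits $\nu^{M,m}$ to $\mu$, so that $(iii)$ holds on arbitrary Borel sets rather than only on continuity sets of $\mu$; this is exactly where the monotone convergence theorem for measures, rather than plain weak-$*$ compactness, is the essential tool.
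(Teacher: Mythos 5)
Your proposal is correct and follows essentially the same route as the paper: the same diagonal extraction for $i)$, the same monotonicity of $E_n^{M,m}(x)$ in $M$ driving the limit in $M$, the same estimate $\#E_n^{M,m}-\#E_n^{M,m'}\le (m'+M)\,\#\partial E_n^{M}/2$ combined with Lemma \autoref{lem:def-An} for the $m$-independence, and the same boundary-counting bound for the $g$-invariance. The only (harmless) difference is bookkeeping in the $M$-limit: you realize $\mu$ as the setwise monotone limit of the $\nu^{M,m}$ (mass $\beta_m^M/\beta^\infty\nearrow 1$ giving convergence in total variation), whereas the paper proves weak-$*$ convergence via a Cauchy-type estimate in $\beta_m^{M'}-\beta_m^M$ and then upgrades item $iii)$ to Borel sets by outer regularity --- both arguments rest on exactly the same monotonicity, and your transfer of the order $\nu_n^{M,m}\le\nu_n^{M',m}$ to the weak-$*$ limits implicitly uses the same regularity of Radon measures.
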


\begin{proof}
Item $i)$ is a consequence of Cantor's diagonal argument.\\

For item $ii)$, we first show that $(\nu^{M,m})_M$ converges. This will give the convergence of $(\mu^{M,m})_M$ as one can apply item $ii)$ from Lemma \autoref{lem:def-An} to the following equality:
$$
\nu_n^{M,m} = \frac{\int d_n(E^{M,m}(x)) dLeb_{A_n}(x)}{\beta^{\infty}} \mu_n^{M,m}
$$
Then, the following inequality gives the convergence of $(\nu^{M,m})_M$:
$$
\forall \psi : I \to \bbR \text{ continuous}, \forall M \leq M',
\left|\int \psi d \nu^{M',m} - \int \psi d \nu^{M,m}\right|
\leq \frac{\beta_m^{M'} - \beta_m^M}{\beta^{\infty}} || \psi ||_{\infty}
$$
Then, the inequality that we used to prove that $\beta^{\infty}$ does not depend on $m$ in item $ii)$ of Lemma \autoref{lem:def-An} gives
$$
\forall \psi : I \to \bbR \text{ continuous}, \forall m \leq m',
\left | \int \psi d \nu^{M,m'} - \int \psi d \nu^{M,m} \right | \leq \limsup\limits_{n \in \mathcal{n}} \frac{|| \psi ||_{\infty}}{2 \beta^{\infty}} (m' + M) \int d_n(\partial E_n^M(x)) d Leb_{A_n}(x) 
$$
So item $iii)$ from Lemma \autoref{lem:def-An} gives that the limit of $(\nu^{M,m})_M$ does not depend on $m$.
Let $\mu$ be the limit of the sequence $(\mu^{M,m})_M$. Since every $\mu_n^{M,m}$ is a borelian probability measure, the limit $\mu$ is a borelian probability. To prove $g$-invariance, we take $M,m \in \bbZ_0^+$ and $n \in \mathcal{n}$ and notice that for $\psi : I \to \bbR$ continuous, we have 
\begin{align*}
\left | \int \psi \: d g_* \nu_n^{M,m} - \int \psi \: d \nu_n^{M,m} \right |
&= \frac{1}{n \beta^{\infty} }\left | \int \sum\limits_{i \in E_n^{M,m}(x)+1} \psi(g^i x) - \sum\limits_{i \in E_n^{M,m}(x)} \psi(g^i x) \: d Leb_{A_n}(x) \right |\\[0.5em]
&\leq || \psi ||_{\infty} \frac{1}{n \beta^{\infty} } \int \# \partial E_n^{M,m}(x) \: dLeb_{A_n(x)}
\end{align*}
Therefore, by using item $ii)$ from Lemma \autoref{lem:EnM}, we get that the above quantity goes to zero when $n$ then $M$ go to infinity, which yields the $g$-invariance of $\mu$.\\

For item $iii)$, we use that when $n$ is fixed, then $(\nu_n^{M,m})_M$ is a non-decreasing sequence of measures. Since the convergence in $n$ is in the weak-$*$ topology, any non-negative continuous function $\psi : I \to \bbR$ satisfies $\int \psi \: d \nu^{M,m} =\nu^{M,m}(\psi) \underset{M \to +\infty}{\nearrow} \mu(\psi) = \int \psi \: d \mu$.
To go from non-negative continuous functions to characteristic functions, one can use the outer regularity of the measures $\nu^{M,m}$.
\end{proof}

\subsection{Properties of the limit measure}
\label{ssec:prop-mu}

Using section \hyperref[sec:tpsHB]{\textbf{4.}}, we show some properties of the limit measure $\mu$ built in the previous section.
Let $\phi_g$ be the geometric potential defined by 
$$
\gls*{Phig} : \begin{cases}
I \to \bbR \cup \{ - \infty \}\\
x \mapsto \log |g'(x)|
\end{cases}
$$

\begin{prop}
\label{prop:phi-integ}
The function $\phi_g$ is $\mu$-integrable
\end{prop}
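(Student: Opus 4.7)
The approach is to prove the non-negativity $\int \phi_g\, d\mu_n^{M,m} \geq 0$ at the pre-limit level using the chain rule together with item $i)$ of Lemma \autoref{lem:tpsHB}, and then to transfer this inequality to the limit measure $\mu$ by exploiting the upper semicontinuity of $\phi_g$.

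The first step goes as follows. Decompose the set $E_n^{M,m}(x)$ into its connected components $[\![ a_j ; b_j [\![$. By item $i)$ of Lemma \autoref{lem:EnM}, the right endpoint $b_j \in \partial E_n^{M,m}(x)$ lies in $E(x)$, so item $i)$ of Lemma \autoref{lem:tpsHB} applied with $(k,l) = (a_j, b_j)$ together with the chain rule gives
$$
\sum_{i=a_j}^{b_j-1} \log|g'(g^i x)| = \log |(g^{b_j-a_j})'(g^{a_j} x)| \geq (b_j - a_j) \log 10 \geq 0.
$$
Summing over the components, integrating against $Leb_{A_n}$ and dividing by the normalization factor of $\mu_n^{M,m}$ then yields $\int \phi_g\, d\mu_n^{M,m} \geq 0$ for every $n \in \mathcal{n}$ and every $M, m \in \bbZ_0^+$.

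The second step is the passage to the limit. The function $\phi_g = \log|g'|$ is upper semicontinuous (continuous where $g'$ does not vanish and equal to $-\infty$ on the critical set of $g$) and is bounded above by $C := \log || g' ||_{\infty}$, so $\psi := C - \phi_g$ is lower semicontinuous and non-negative. By the Portmanteau inequality applied to the weak-$*$ convergence $\mu_n^{M,m} \to \mu^{M,m}$ from item $i)$ of Proposition \autoref{prop:CV-mes}, we have $\int \psi\, d\mu^{M,m} \leq \liminf_{\mathcal{n} \ni n \to \infty} \int \psi\, d\mu_n^{M,m} \leq C$. Applying the same inequality once more to the convergence $\mu^{M,m} \to \mu$ as $M \to \infty$ from item $ii)$ of Proposition \autoref{prop:CV-mes} yields $\int \psi\, d\mu \leq C$, which is equivalent to $\int \phi_g\, d\mu \geq 0$ together with $\phi_g^- \in L^1(\mu)$. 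Combined with the trivial bound $\phi_g^+ \leq C$, this shows $\phi_g \in L^1(\mu)$.

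The main delicate point is that $\phi_g$ is unbounded below, so weak-$*$ convergence does not directly preserve the integral of $\phi_g$ itself. Working instead with the non-negative LSC function $\psi = C - \phi_g$ circumvents this issue: a single application of the Portmanteau inequality to $\psi$ simultaneously produces both the non-negativity of $\int \phi_g\, d\mu$ and the integrability claimed in the proposition.
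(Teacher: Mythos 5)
Your proof is correct and takes essentially the same route as the paper: the decisive input is identical, namely the estimate from Lemma \autoref{lem:tpsHB} giving non-negativity of the sum of $\log|g'|$ along each connected component of $E_n^{M,m}(x)$, which yields the pre-limit bound $\int (\log||g'||_{\infty} - \phi_g)\, d\mu_n^{M,m} \leq \log||g'||_{\infty}$. The only difference is in packaging the limit step: you invoke the Portmanteau inequality for the non-negative lower semicontinuous function $\log||g'||_{\infty} - \phi_g$, while the paper proves the same lower semicontinuity by hand, truncating $\phi_g^-$ by the continuous functions $\min(k,\phi_g^-)$ and using monotone convergence.
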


\begin{proof}
Since $\phi_g \leq \log || g' ||_{\infty}$, we only have to prove $\int \phi_g^- d \mu < +\infty$, where $\phi_g^- := - \min (\phi_g, 0)$.
For $k \in \bbZ_0^+$, note
$$
\phi_{g,k} = \min(k, \phi_g^-) : I \to \bbR_+ \underset{k \to +\infty}{\nearrow} \phi_g^-
$$
By monotone convergence and continuity of $\phi_{g,k}$, we get
\begin{align*}
\int \phi_g^- d \mu = \lim\limits_{k \to +\infty} \uparrow \int \phi_{g,k} \: d \mu
&= \lim\limits_{k \to +\infty} \lim\limits_{M \to +\infty} \lim\limits_{\mathcal{n} \ni n \to \infty} \int \phi_{g,k} \: d \mu_n^M\\[0.5em]
&= \lim\limits_{k \to +\infty} \lim\limits_{M \to +\infty} \lim\limits_{\mathcal{n} \ni n \to \infty} \frac{\int \sum\limits_{i \in E_n^M(x)} \phi_{g,k}(g^ix) d Leb_{A_n}(x)}{\int \# E_n^M(x) d Leb_{A_n}(x)}
\end{align*}
We then estimate the terme inside the integral.
For $k \in \bbZ_0^+$, $M \in \bbZ^+$, $n \in \mathcal{n}$ and $x \in A_n$, we have

\begin{align*}
\sum\limits_{i \in E_n^M(x)} \phi_{g,k}(g^ix)
&\leq \sum\limits_{i \in E_n^M(x)} -\min(0, \phi_g(g^i x))\\
&\leq \sum\limits_{i \in E_n^M(x)} \left ( - \phi_g(g^i x) + \log || g' ||_{\infty} \right )\\
\underset{\text{Lemma \autoref{lem:tpsHB}.}ii)}&{\leq} 0 + \# E_n^M(x) \log || g' ||_{\infty}
\end{align*}
where we used item $ii)$ from Lemma \autoref{lem:tpsHB} on every connected component of $E_n^M(x)$.
This then leads to $\int \phi_g^- d \mu \leq \log || g' ||_{\infty} < +\infty$.
\end{proof}

\begin{prop}
\label{prop:expo-pos}
We have $\chi_g > 0$ $\mu$-almost everywhere and $\int \phi_g d \mu = \int \chi_g d \mu$.
\end{prop}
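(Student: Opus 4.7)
The proof splits into the integral identity $\int \phi_g\,d\mu = \int \chi_g\,d\mu$ and the $\mu$-a.e.\ positivity $\chi_g > 0$. For the identity I would apply Birkhoff's ergodic theorem: Proposition \autoref{prop:CV-mes}.ii) gives the $g$-invariance of $\mu$ and Proposition \autoref{prop:phi-integ} gives $\phi_g \in L^1(\mu)$, so the Birkhoff averages $\frac{1}{N}\sum_{k=0}^{N-1} \phi_g \circ g^k$ converge $\mu$-a.e.\ and in $L^1(\mu)$ to a $g$-invariant function $\widetilde{\phi}$ with $\int \widetilde{\phi}\,d\mu = \int \phi_g\,d\mu$. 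Since $\chi_g$ is by definition the $\limsup$ of these averages and the limit exists $\mu$-a.e., one has $\chi_g = \widetilde{\phi}$ $\mu$-a.e., which proves the identity. Moreover, by $g$-invariance of $\mu$ and of $B$ one has $\int_B \phi_g \circ g^k\,d\mu = \int_B \phi_g\,d\mu$ for every $k$, so the $L^1$ convergence upgrades the identity to the restricted form $\int_B \phi_g\,d\mu = \int_B \chi_g\,d\mu$ for every $g$-invariant Borel set $B$.

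For the positivity, the plan is to prove the sharper bound $\chi_g \geq \log 10$ $\mu$-a.e., by first showing that for every $g$-invariant Borel set $B$,
\begin{equation*}
\int_B \phi_g\, d\mu \;\geq\; \log 10 \cdot \mu(B).
\end{equation*}
Indeed, if $\mu(B_\varepsilon) > 0$ where $B_\varepsilon := \{\chi_g < \log 10 - \varepsilon\}$ (a $g$-invariant set since $\chi_g$ is $g$-invariant $\mu$-a.e.), the restricted identity combined with this inequality would give $\log 10 \cdot \mu(B_\varepsilon) \leq \int_{B_\varepsilon} \chi_g\,d\mu \leq (\log 10 - \varepsilon) \mu(B_\varepsilon)$, a contradiction.

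The bound originates at the empirical level. For every $x \in A_n$, applying Lemma \autoref{lem:tpsHB}.ii) to each connected component of $E_n^{M,m}(x)$ and summing gives the pointwise estimate
\begin{equation*}
\sum_{i \in E_n^{M,m}(x)} \phi_g(g^i x) \;\geq\; \# E_n^{M,m}(x) \cdot \log 10.
\end{equation*}
Since $B$ is $g$-invariant, $\mathbf{1}_B(g^i x) = \mathbf{1}_B(x)$ for all $i \geq 0$, so multiplying by $\mathbf{1}_B(x)$, integrating over $A_n$ and normalizing by $n\beta^{\infty}$ gives the empirical inequality
\begin{equation*}
\int \mathbf{1}_B \phi_g\, d\nu_n^{M,m} \;\geq\; \log 10 \cdot \nu_n^{M,m}(B).
\end{equation*}

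The main obstacle is propagating this inequality through the limits $n \to \infty$ and $M \to \infty$. The $M \to \infty$ passage is painless: Proposition \autoref{prop:CV-mes}.iii) gives the setwise convergence $\nu^{M,m} \nearrow \mu$, and applying monotone convergence to the positive and negative parts of $\mathbf{1}_B \phi_g$ (both $\mu$-integrable by Proposition \autoref{prop:phi-integ}) converts an inequality at the $\nu^{M,m}$ level into the one on $\mu$. The delicate step is the $n \to \infty$ passage to $\nu^{M,m}$, since $\phi_g$ diverges at critical points and $\mathbf{1}_B$ is not continuous; I would handle this by truncating $\phi_g$ to $\phi_{g,K} := \max(\phi_g, -K)$ as in the proof of Proposition \autoref{prop:phi-integ} (the pointwise estimate survives since $\phi_{g,K} \geq \phi_g$), then approximating $\mathbf{1}_B$ by continuous functions via outer regularity of $\nu^{M,m}$, and finally sending $K \to \infty$.
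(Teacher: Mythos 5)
Your first half is sound and matches the paper: the identity $\int \phi_g \, d\mu = \int \chi_g \, d\mu$ is obtained there exactly as you do, by Birkhoff's theorem for $\phi_g \in \mathbb{L}^1(\mu)$ (Propositions \autoref{prop:CV-mes} and \autoref{prop:phi-integ}), and your empirical estimate $\sum_{i \in E_n^{M,m}(x)} \phi_g(g^i x) \geq \# E_n^{M,m}(x) \log 10$ from Lemma \autoref{lem:tpsHB}.$ii)$ is correct. The gap is in the $n \to \infty$ passage of the inequality $\int \mathbf{1}_B \phi_g \, d\nu_n^{M,m} \geq \log 10 \cdot \nu_n^{M,m}(B)$ for $B = B_\varepsilon = \{ \chi_g < \log 10 - \varepsilon \}$. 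The convergence $\nu_n^{M,m} \to \nu^{M,m}$ is only weak-$*$, and $B_\varepsilon$ is a tail-defined Borel set, neither open nor closed, so neither $\nu_n^{M,m}(B_\varepsilon) \to \nu^{M,m}(B_\varepsilon)$ nor $\int \mathbf{1}_{B_\varepsilon} \phi_{g,K} \, d\nu_n^{M,m} \to \int \mathbf{1}_{B_\varepsilon} \phi_{g,K} \, d\nu^{M,m}$ is available; a portmanteau argument would need the left-hand integrand to be upper semicontinuous and the set on the right to be open, i.e.\ the two sides of your inequality require opposite semicontinuity properties. Outer regularity of $\nu^{M,m}$ does not repair this: it controls the limit measure only, not the empirical measures along $\mathcal{n}$, and if you enlarge $B_\varepsilon$ to an open superset you lose the $g$-invariance that was essential to derive the empirical inequality in the first place (moreover $\phi_{g,K}$ takes negative values, so enlarging the set may decrease the left-hand side). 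As written, the key inequality $\int_{B_\varepsilon} \phi_g \, d\mu \geq \log 10 \, \mu(B_\varepsilon)$ is therefore not established.

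The paper circumvents precisely this difficulty by never integrating a discontinuous object against the weak-$*$ limit: it introduces the sets $K_M = \{ x \mid \exists m \in [\![ 1 ; M ]\!], \ \phi_g^m(x) \geq m \log 10 \}$, which are closed because $\phi_g^m$ is upper semicontinuous with values in $[-\infty, +\infty)$. Lemma \autoref{lem:tpsHB}.$i)$ shows that $g^k x \in K_M$ for every $k \in E_n^M(x)$, hence $\mu_n^M(K_M) = 1$; closedness allows the passage $\mu^M(K_M) = 1$, then Proposition \autoref{prop:CV-mes}.$iii)$ gives $\mu(K) = 1$ for $K = \bigcup_M K_M$, and $g$-invariance of $\mu$ together with a chaining of the blocks $\phi_g^{m_{k+1}-m_k}(g^{m_k}x) \geq (m_{k+1}-m_k) \log 10$ yields $\chi_g \geq \log 10$ $\mu$-almost everywhere pointwise, with no restricted Birkhoff identity or invariant-set limit needed. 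To salvage your framework, replace $B_\varepsilon$ by these closed superlevel sets of finite Birkhoff sums (or otherwise reduce the limit passage to closed sets), which is exactly the missing idea.
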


\begin{proof}
To prove that $\int \phi_g d \mu = \int \chi_g d \mu$, we use Birkhoff's ergodic theorem for the potential $\phi_g \in \mathbb{L}^1(\mu)$.
Then, to prove that $\chi_g > 0$ $\mu$-almost everywhere, we adapt the proof of Lemma 6 from \cite{MR4703423}.
We will in fact prove that $\chi_g \geq \log 10$ $\mu$-almost everywhere.
For $m \in \bbZ_0^+$, we note $\phi_g^m =  \sum\limits_{k \in [\![ 0 ; m-1 ]\!]} \phi_g \circ g^k$.
For $M \in \bbZ_0^+$, let $K_M = \{ x \in I \mid \exists m \in [\![ 1 ; M ]\!], \phi_g^m(x) \geq m \log 10 \}$.
Although $\phi_g$ is not continuous everywhere, the set $K_M$ is closed because $\phi_g$ is not continuous only on the critical set $\mathcal{C}_g$.
Then, for $x \in \sigma_*, M \in \bbZ_0^+$ and $n \in \mathcal{n}$, we have that any $k \in E_n^M(x)$ satisfies $g^k x \in K_M$.
Indeed, if $k \in E_n^M(x)$, then there exists $l \in E(x)$ such that $1 \leq l-k \leq M$, hence item $i)$ from Lemma \autoref{lem:tpsHB} gives
$$
\phi_g^{l-k}(g^k x) \geq (l-k) \log 10
$$
Therefore $\mu_n^M(K_M) = 1$. Since $K_M$ is closed, letting $n$ go to infinity gives $\mu^M(K_M) = 1$, which we rather write as $\nu^M(K_M) = \frac{\beta^M}{\beta^{\infty}}$, where $\beta^M \underset{M \to +\infty}{\nearrow} \beta^{\infty}$ as stated in item $ii)$ from Lemma \autoref{lem:def-An}.
Hence, if we let $K = \bigcup\limits_{M \geq 1} K_M$, then item $iii)$ from Proposition \autoref{prop:CV-mes} gives $\mu(K) = 1$.
Since $\mu$ is $g$-invariant, we also have that $\mu\left ( \bigcap\limits_{k \geq 0} g^{-k} K \right ) = 1$.
In other words, for $\mu$-almost every $x \in I$, we have that
$$
\forall k \geq 0, \exists m \geq 1, \phi_g^m(g^k x) \geq m \log 10
$$
By applying this iteratively, we obtain a strictly non-decreasing sequence of integers $(m_k)$ such that
\begin{align*}
&\phi_g^{m_0}(x) \geq m_0 \log 10\\
&\phi_g^{m_1 - m_0}(g^{m_0} x) \geq (m_1 - m_0) \log 10\\
&...\\
&\phi_g^{m_{k+1} - m_k}(g^{m_k} x) \geq (m_{k+1} - m_k) \log 10
\end{align*}
By summing these inequalities, we obtain that for any $k$, $\phi_g^{m_k}(x) \geq m_k \log 10$, hence $\chi_g(x) \geq \log 10$.

\end{proof}

\section{Entropy of the empirical measure}
\label{sec:entropie}

Let us sum up what we have done so far.
We noted $A = \{ \chi > R(f) / r \}$ and fixed $g = f^p$ some iterate of $f$, $\varepsilon > 0, \beta > 0$ and $\sigma$ an $\varepsilon$-bounded reparametrization such that $Leb(\sigma_* \cap A) > 0$.
We applied the Reparametrization Lemma to $\sigma$ and $g$ and got a family of reparametrizations organized as a tree with unbounded degree.
For $x \in \sigma_* \cap A$, we defined $E(x)$ the set of geometric times of $x$, whose density is larger than $\beta$ for Lebesgue-almost every $x$.
Then, using two parameters $M$ and $m$ in $\bbZ_0^+$, we defined the sets $E^{M,m}(x)$ so that the orbit of $x$ at these times does not get too close to critical points.
By integrating those pieces of orbits up to time $n$, we got an empirical measure $\mu_n^{M,m}$.
Then, by letting $n \to +\infty$ along some sequence $\mathcal{n}$ then by letting $M \to +\infty$, we got a $g$-invariant measure $\mu$, and we proved that $\mu$ is hyperbolic and that $\log |g'|$ is $\mu$-integrable.
We now wish to prove that $\mu$ is absolutely continuous with respect to the Lebesgue measure. By using the entropy characterization given by Theorem \autoref{th:form-entropie}, we will in fact prove that $\mu$ satisfies the entropy formula $h_{\mu}(g) = \int \log |g'| d \mu$.\\

In this section, we give a lower bound of the entropy of the empirical measure $\mu_n^{M,m}$ for a specific countably infinite partition $\mathcal{P}_q$ that depends on a parameter $q \in \bbZ^+$.

\subsection{First entropy estimates}
\label{ssec:entropie1}

We start this section with a slightly modified version of Lemma 5 from \cite{MR4701884}, which gives a lower bound of the entropy of an empirical measure for a finite measurable partition.
We state a version for countably infinite measurable partitions for which the measure only sees a finite number of atoms. More precisely, if $\mathcal{P}$ is a measurable partition and $\mu$ is a borelian measure, then we ask for the following set to be finite:
$$
\gls*{Partition_mu} := \{ P \in \mathcal{P} \mid \mu(P) > 0 \}
$$
Also, what we call a measurable partition depends on the measure:

\begin{definition}[Measurable partition]
If $(X, \mathcal{A},\lambda)$ is a probability space and $\mathcal{R}$ is a countable collection of subsets of $X$, we say that $\mathcal{R}$ is a measurable partition for $\lambda$ if
\begin{itemize}
\item[i)] for $R \neq R' \in \mathcal{R}$, $\lambda(R \cap R') = 0$,
\item[ii)] the union $\bigcup\limits_{R \in \mathcal{R}} R$ has full $\lambda$-measure.
\end{itemize}
\end{definition}

The important remark is that when a measure $\lambda$ is not preserved by a dynamical system $T$, and if $\mathcal{R}$ is a measurable partition for $\lambda$, then $T^{-1} \mathcal{R}$ may not be a measurable partition for $\lambda$.

\begin{lem}
\label{lem:Misiurewicz}
Let $\mathcal{R}$ be a countable measurable partition of a probabilty space $(X, \lambda)$. Let $T : X \to X$ be a measurable transformation, which may not preserve $\lambda$. Let $F$ be a finite subset of $\bbZ_0^+$.
Note $\lambda^F = \frac{1}{\# F} \sum\limits_{k \in F} T^k_* \lambda$.
Assume that for every $i \in F$, $T^{-i} \mathcal{R}$ is a measurable partition for $\lambda$ and that $\mathcal{R}_{\lambda^F}$ is finite.
For $m \in \bbZ^+$, if we note $\gls*{Partition_m}=\bigvee\limits_{k=0}^{m-1} T^{-k} \mathcal{R}$ and $\gls*{RF} = \bigvee\limits_{k \in F} T^{-k} \mathcal{R}$, then we have
$$
\frac{1}{m} H_{\lambda^F} (\mathcal{R}^m) \geq \frac{1}{\# F} H_{\lambda}(\mathcal{R}^F) - m \log (\# \mathcal{R}_{\lambda^F}) \frac{\# \partial F}{\# F}
$$
\end{lem}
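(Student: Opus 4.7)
The plan is a Misiurewicz-style block decomposition, adapted to the non-invariant setting and to the sparse time set $F$, with $\partial F$ replacing the boundary of $[\![ 0 ; n [\![$ in the classical version. For each $j \in [\![ 0 ; m [\![$, partition $\bbZ$ into consecutive blocks $I_\ell^j := [\![ j + \ell m ; j + (\ell+1) m [\![$ of length $m$, so that $F = \bigsqcup_\ell (F \cap I_\ell^j)$ and hence $\mathcal{R}^F = \bigvee_{\ell} \mathcal{R}^{F \cap I_\ell^j}$. Call a block \emph{internal} when $I_\ell^j \subset F$ and \emph{boundary} when $I_\ell^j$ meets both $F$ and its complement. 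A monotonicity argument within the block shows that each boundary block contains at least one index in $\partial F = F \Delta (F+1)$, so for each fixed $j$ there are at most $\# \partial F$ boundary blocks.

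For an internal block, the assumption that $T^{-i}\mathcal{R}$ is a measurable partition for $\lambda$ whenever $i \in F$ (in particular for every $i \in I_\ell^j$) legitimates the standard change-of-variable identity
$$H_\lambda(T^{-(j+\ell m)} \mathcal{R}^m) = H_{T^{j+\ell m}_* \lambda}(\mathcal{R}^m).$$
For a boundary block one uses only the crude cardinality estimate: if $R \in \mathcal{R}$ satisfies $\lambda^F(R) = 0$ then $\lambda(T^{-k} R) = 0$ for every $k \in F$, so each $T^{-k}\mathcal{R}$ with $k \in F$ has at most $\# \mathcal{R}_{\lambda^F}$ atoms of positive $\lambda$-measure, whence $H_\lambda(\mathcal{R}^{F \cap I_\ell^j}) \leq |F \cap I_\ell^j| \log \# \mathcal{R}_{\lambda^F} \leq m \log \# \mathcal{R}_{\lambda^F}$.

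Subadditivity of entropy applied to $\mathcal{R}^F = \bigvee_\ell \mathcal{R}^{F \cap I_\ell^j}$ then gives, for each $j$,
$$H_\lambda(\mathcal{R}^F) \leq \sum_{\ell \text{ internal}} H_{T^{j + \ell m}_* \lambda}(\mathcal{R}^m) + m \# \partial F \cdot \log \# \mathcal{R}_{\lambda^F}.$$
Summing over $j \in [\![ 0 ; m [\![$ and observing that each $k \in F$ is the starting index $j + \ell m$ of an internal block for at most one pair $(j,\ell)$ (namely $j = k \bmod m$, $\ell = \lfloor k/m \rfloor$, and only when $[\![ k ; k + m [\![ \subset F$), one obtains
$$m \, H_\lambda(\mathcal{R}^F) \leq \sum_{k \in F} H_{T^k_* \lambda}(\mathcal{R}^m) + m^2 \# \partial F \cdot \log \# \mathcal{R}_{\lambda^F}.$$
Concavity of the functional $\mu \mapsto H_\mu(\mathcal{R}^m)$, applied to the convex combination $\lambda^F = \frac{1}{\#F} \sum_{k \in F} T^k_* \lambda$, bounds the remaining sum by $\#F \cdot H_{\lambda^F}(\mathcal{R}^m)$. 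Dividing by $m \# F$ and rearranging then yields the claimed inequality.

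The only delicate point is the measurability bookkeeping: one invokes the hypothesis ``$T^{-i}\mathcal{R}$ is measurable for $\lambda$ for $i \in F$'' precisely to make $H_\lambda(T^{-i}\mathcal{R}^m) = H_{T^i_* \lambda}(\mathcal{R}^m)$ meaningful on internal blocks, and the finiteness of $\mathcal{R}_{\lambda^F}$ to control the boundary contribution by a single uniform cardinality. Everything else is the standard Misiurewicz blocking combinatorics, with no genuinely hard analytic step.
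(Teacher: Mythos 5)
Your proof is correct and follows essentially the same route as the paper: Misiurewicz's blocking into length-$m$ windows, the crude bound $H_\lambda(T^{-k}\mathcal{R})\le\log\#\mathcal{R}_{\lambda^F}$ for $k\in F$ (the paper's fact $(6.1)$), and concavity of $\mu\mapsto H_\mu(\mathcal{R}^m)$ applied to the convex combination defining $\lambda^F$. The only difference is organizational: you block the general set $F$ directly over all offsets $j$ and charge the boundary blocks to $\#\partial F$, whereas the paper first treats the case where $F$ is a single interval (summing over the offset $r$) and then recombines the maximal intervals $F_k$ of $F$ via a second application of concavity, using $2N=\#\partial F$ and $\#\mathcal{R}_{\lambda^{F_k}}\le\#\mathcal{R}_{\lambda^F}$; the resulting error term is identical.
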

\begin{proof}
When $F = [\![ a; b ]\!]$, we use an argument from Misiurewicz's proof of the variational principle \cite{Misiurewicz1976}.
We start with the case $m \in [\![ 1 ; b-a ]\!]$, we let $r \in [\![ 0 ; m-1 ]\!]$ and note $j_r = \lfloor \frac{b-a - r}{m} \rfloor$.
We have
\begin{align*}
\mathcal{R}^F &= \bigvee\limits_{i=a}^b T^{-i} \mathcal{R}
= T^{-a} \bigvee\limits_{i=0}^{b-a} T^{-i} \mathcal{R}\\
&=T^{-a} \left [ \left ( \bigvee\limits_{j=0}^{j_r - 1} T^{-(mj + r)} \mathcal{R}^m \right )
\vee \left ( \bigvee\limits_{i=0}^{r-1} T^{-i} \mathcal{R} \right )
\vee \left ( \bigvee\limits_{i=r+j_r m}^{b-a} T^{-i} \mathcal{R} \right ) \right ]
\end{align*}
Then, $T^{-i} \mathcal{R}$ is a measurable partition when $i \in F$, so its entropy is well defined and we have
\begin{align*}
H_{\lambda}(\mathcal{R}^F) 
\leq \sum\limits_{j=0}^{j_r - 1} H_{\lambda} \left ( T^{-(mj + r+a)} \mathcal{R}^m \right )
+ \sum\limits_{i=0}^{r-1} H_{\lambda} \left ( T^{-(i+a)} \mathcal{R} \right )
+ \sum\limits_{i=r+j_r m}^{b-a} H_{\lambda} \left ( T^{-(i+a)} \mathcal{R} \right )
\end{align*}
Then, we will use the following fact:
\begin{align}
\forall i \in F, \forall A \text{ measurable}, \left ( \lambda(T^{-i} A) > 0 \Rightarrow \lambda^F(A) > 0 \right )
\end{align}
This implies, for $i \in F$, that
$$
H_{\lambda}(T^{-i} \mathcal{R}) \leq \log \# \mathcal{R}_{\lambda^F}
$$
Indeed, when $T^{-i} R$ is of positive $\lambda$-measure, then $R \in \mathcal{R}_{\lambda^F}$.
Therefore, by using $r + (b-a) - (r+ j_r m) + 1 \leq 2m$, we get
$$
H_{\lambda}(\mathcal{R}^F) \leq 2m \log \# \mathcal{R}_{\lambda^F} + \sum\limits_{j=0}^{j_r - 1} H_{\lambda} \left ( T^{-(mj + r+a)} \mathcal{R}^m \right )
$$
Summing over $r$ then gives
$$
m H_{\lambda}(\mathcal{R}^F) \leq \sum\limits_{i=a}^{b-m} H_{\lambda}(T^{-i} \mathcal{R}^m) + 2m^2 \log \# \mathcal{R}_{\lambda^F}
\leq 2m^2 \log \# \mathcal{R}_{\lambda^F} + \sum\limits_{i \in F} H_{\lambda}(T^{-i} \mathcal{R}^m)
$$
Then, if $m \geq |b-a|$, we have $H_{\lambda}(\mathcal{R}^F) \leq \# F \log \# \mathcal{R}_{\lambda^F} \leq m \log \# \mathcal{R}_{\lambda^F}$. So the above inequality still holds.
Hence, by using the concavity of $\lambda \mapsto H_{\lambda}(\mathcal{R}^m)$, we get
\begin{align*}
\frac{1}{m} H_{\lambda^F}(\mathcal{R}^m) \geq \frac{1}{m \# F} \sum\limits_{i \in F} H_{\lambda}(T^{-i} \mathcal{R}^m)
\geq \frac{1}{\# F} \left ( H_{\lambda}(\mathcal{R}^F) - 2m \log \# \mathcal{R}_{\lambda^F} \right )
\end{align*}
Then, if $F$ is a union of intervals $F_k = [\![ a_k ; b_k ]\!]$ with $k \in [\![ 1 ; N ]\!]$, using the concavity of $\lambda \mapsto H_{\lambda}(\mathcal{R}^m)$ gives
\begin{align*}
\frac{1}{m} H_{\lambda^F}(\mathcal{R}^m) &\geq \frac{1}{m} \sum\limits_{k=1}^N \frac{\# F_k}{\# F} H_{\lambda^{F_k}}(\mathcal{R}^m)\\
\underset{\text{previous case}}&{\geq} \: \sum\limits_{k=1}^N \frac{1}{\# F} \left ( H_{\lambda}(\mathcal{R}^{F_k}) - 2m \log \# \mathcal{R}_{\lambda^{F_k}} \right )\\
&\geq \frac{H_{\lambda}(\mathcal{R}^F)}{\# F} - \sum\limits_{k=1}^N \frac{2m}{\# F} \log \# \mathcal{R}_{\lambda^{F_k}}
\end{align*}
Notice that $2N = \# \partial F$ and consider the following fact, analog to $(6.1)$:
$$
\forall k \in [\![ 1 ; N ]\!], \forall A \text{ measurable}, \left ( \lambda^{F_k}(A) > 0 \Rightarrow \lambda^F(A) > 0 \right )
$$
This leads to $\# \mathcal{R}_{\lambda^{F_k}} \leq \# \mathcal{R}_{\lambda^F}$, which concludes.
\end{proof}

For the rest of this section, fix $m \in \bbZ^+, M \in \bbZ_0^+, n \in \mathcal{n}$, and recall that
$\mathcal{E}_n^{M,m}$ is the partition of $A \cap \sigma_*$ whose atoms are the sets $\{ E_n^{M,m}(x) = E \}$ where $E \subset \bbZ_0^+$ are such that the associated atoms are of positive $Leb_{A_n}$-measure.
To shorten notations, we will denote by $E$ both the atom of $\mathcal{E}_n^{M,m}$ and the value $E_n^{M,m}(x)$ for $x$ in this atom. Then for $E \in \mathcal{E}_n^{M,m}$, we note
$$
\gls*{Mu_E} = \frac{Leb_{A_n}(E \cap \cdot)}{Leb_{A_n}(E)} \; \; \; \text{and} \; \; \; \mu^E = \frac{1}{\# E} \sum\limits_{k \in E} g^k_* \mu_E
$$
Let $\gls*{J}$ be the partition of $I$ into monotone branches of $g$ as defined in Definition \autoref{def:monot-branche}.

\begin{rmq}
The collection $\mathcal{J}$ is a measurable partition for $\mu$, because its atoms are disjoint and cover $I \backslash \mathcal{C}_g$ which is of full $\mu$-measure since $\log |g'|$ is $\mu$-integrable (Proposition \autoref{prop:phi-integ}).
Moreover, when we consider the partition $\mathcal{J}^n$, its atoms may be of the form $[a;b[, ]a;b]$, or $]a;b[$. Therefore, it is equal to the partition into monotone branches of $g^n$ Lebesgue-almost everywhere, but it may not be equal everywhere.
\end{rmq}

We first show that $H_{\mu_E}(\mathcal{J}^E) < +\infty$ for $E \in \mathcal{E}_n^{M,m}$, this will ensure the computations of the rest of the section hold. We write
$$
E = \bigcup\limits_{j = 1}^d [\![ a _j ; b_j [\![
$$
So we get
\begin{align*}
H_{\mu_E}(\mathcal{J}^E) &\leq \log \# \{ J \in \mathcal{J}^E \mid \mu_E(J) > 0 \}\\
&\leq \log \# \{ J \in \mathcal{J}^E \mid J \cap A_n \cap E \neq \emptyset \}\\
&\leq \sum\limits_{j=1}^d \log \# \{ J \in \mathcal{J}^{[\![ a_j ; b_j [\![} \mid J \cap A_n \cap E \neq \emptyset \}
\end{align*}
But when $y$ is in $J \cap A_n \cap E$, then item $ii)$ of Lemma \autoref{lem:tpsHB} gives
$$
|(g^{b_j - a_j})'(g^{a_j} y)| \geq 1
$$
Therefore the entropy is finite, because $g^{b_j - a_j}$ is $\mathcal{C}^1$, so the number of monotone branches where the derivative reaches some fixed positive value is finite.
We also mention that in the $\mathcal{C}^r$ case, we have an estimate of this number.
This is the statement of the following lemma, though we will only use it in section \hyperref[ssec:entropie2]{\textbf{6.b.}}

\begin{lem}
\label{lem:brch-mon-DM}
Let $g : I \to I$ be a $\mathcal{C}^r$ map where $I$ is the interval $[0
;1]$ or the circle $\bbT^1$, and $r>1$.
Note $r' = \min(r,2)$.
For $s > 0$, the number of monotone branches of $g$ where $|g'|$ takes at least the value $s$ is less than $C(r',g) s^{-\frac{1}{r'-1}} + 1$.
\end{lem}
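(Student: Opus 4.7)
The plan is to show that any monotone branch of $g$ on which $|g'|$ attains a value $\geq s$ must have length at least $\delta := (s/||d^{r'}g||_{\infty})^{1/(r'-1)}$, up to at most two exceptional branches abutting the boundary of $I$ in the interval case. Disjointness of monotone branches inside $I$ will then close the count.

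The first ingredient is a Hölder estimate for $g'$ near its zeros. Since $g$ is $\mathcal{C}^r$ with $r > 1$ and $r' = \min(r,2)$, one has
\[
|g'(x) - g'(y)| \leq ||d^{r'}g||_{\infty}\, |x-y|^{r'-1} \quad \text{for all } x, y \in I;
\]
for $r \in (1,2)$ this is the Hölder condition on $g'$ by definition, and for $r \geq 2$ it is the mean-value estimate for $g'$ via $||g''||_{\infty}$. In particular, at any critical point $c$ of $g$, $|g'(x)| \leq ||d^{r'}g||_{\infty}\, |x-c|^{r'-1}$. Consequently, if $[a,b[$ is a monotone branch containing a point $x_0$ with $|g'(x_0)| \geq s$ and one of $a, b$ is a critical point $c$, then $b - a \geq |x_0 - c| \geq \delta$.

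Next, I would check that every monotone branch---up to at most two boundary ones---does have a critical point at an endpoint. By Definition \autoref{def:monot-branche}, each endpoint of $]a,b[$ lies in $\mathcal{C}_g \cup g^{-1}(\{0\}) \cup \partial I$. If both $a$ and $b$ were in $g^{-1}(\{0\})$ (and neither were critical), then Rolle's theorem applied to $g$ on $[a,b]$ would produce a critical point in the open interval, contradicting $g' \neq 0$ on $]a,b[$. Hence any branch with no critical endpoint must have an endpoint in $\partial I$, giving at most two such branches in the interval case and none in the circle case.

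Combining the two steps, the number $N$ of monotone branches on which $|g'| \geq s$ is attained satisfies $N \leq 1/\delta + 2 = ||d^{r'}g||_\infty^{1/(r'-1)}\, s^{-1/(r'-1)} + 2$. Choosing $C(r',g)$ large enough (for instance $C(r',g) := ||d^{r'}g||_\infty^{1/(r'-1)} + ||g'||_\infty^{1/(r'-1)}$) forces $C(r',g)\, s^{-1/(r'-1)} \geq 1/\delta + 1$ as soon as $s \leq ||g'||_\infty$, while for $s > ||g'||_\infty$ there are simply no such branches and the bound is trivial. The only subtle point---and the main obstacle---is the Rolle argument ruling out branches with both endpoints in $g^{-1}(\{0\})$; the Hölder estimate and the disjointness count are otherwise routine.
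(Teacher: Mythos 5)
Your argument works on the interval, and there it is essentially the paper's: the H\"older bound $|g'(x)|\le \|d^{r'}g\|_\infty\,\mathrm{dist}(x,\mathcal{C}_g)^{r'-1}$ forces any branch with a critical endpoint that sees $|g'|\ge s$ to have length at least $\delta=(s/\|d^{r'}g\|_\infty)^{1/(r'-1)}$, and disjointness gives the count. The genuine gap is the circle case, and it sits exactly at your Rolle step. By Definition \autoref{def:monot-branche}, monotone branches of a circle map are cut not only at critical points but also at the preimages of the marked point $0$, and a branch can have \emph{both} endpoints in $g^{-1}(\{0\})$ with no critical point anywhere: Rolle does not apply, because $g$ takes values in $\bbT^1$, so $g(a)=g(b)=0$ only means that a lift $\tilde g$ satisfies $\tilde g(b)=\tilde g(a)+k$ for some nonzero integer $k$. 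Concretely, $g(x)=2x \bmod 1$ (or any degree-$d$ covering map, $d\ge 2$) has no critical points, no boundary, and $d$ monotone branches, each with both endpoints in $g^{-1}(\{0\})$ and with $|g'|\ge s$ on it for $s\le d$; your claim that such branches do not exist on the circle ("none in the circle case") fails, and your final count misses all of them. (On the interval your Rolle argument is fine; it is in effect a proof of \autoref{lem:preim-bords}.)

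The repair is cheap and is what the paper does. If a branch $[a,b[$ of a circle map has both endpoints noncritical preimages of $0$, then $g'$ does not vanish on $[a,b]$, a lift is strictly monotone and its endpoint values differ by a nonzero integer, so $b-a\ge 1/\|g'\|_\infty$; hence there are at most $\|g'\|_\infty$ such branches, a number independent of $s$ that can be absorbed into $C(r',g)$ for $s\le\|g'\|_\infty$. Equivalently, the paper first counts the intervals between consecutive critical points (and boundary points) on which $|g'|$ reaches $s$, getting at most $\|d^{r'}g\|_\infty^{1/(r'-1)}s^{-1/(r'-1)}+1$, and then, in the circle case only, subdivides each of them at the preimages of $0$ into at most $\|g'\|_\infty$ pieces, which multiplies the constant by $\|g'\|_\infty$. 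With that extra case added, your proof matches the paper's.
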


\begin{proof}
Because $g$ is $\mathcal{C}^{r'}$, we have the following inequality:
$$
\forall x,y \in I, |g'(x)-g'(y)| \leq || d^{r'} g ||_{\infty} |x-y|^{r' - 1}
$$
So whenever $|g'(x)| \geq s$, then $g'$ cannot vanish in the $|| d^{r'} g ||_{\infty}^{- \frac{1}{r' - 1}} s^{\frac{1}{r' - 1}}$-neighborhood of $x$.
Therefore, there are at most $|| d^{r'} g ||_{\infty}^{\frac{1}{r' - 1}} s^{- \frac{1}{r' - 1}} + 1$ intervals where $|g'|$ reaches $s$ and whose extremities are critical points or points of $\partial I$.
Notice that the $+1$ is only needed if $I$ is the interval.
Now, recall that if $I$ is the circle, then an interval whose extremities are critical points may not be a monotone branch (see section \hyperref[ssec:diff-int-cercle]{\textbf{2.d.}}).
So we may divide the previous intervals into at most $|| g' ||_{\infty}$ pieces.
This requires multiplying our previous bound by $|| g' ||_{\infty}$ if $I$ is the circle, which concludes.
\end{proof}
\vspace{1em}

We now give the entropy estimate of the measure $\mu_n^{M,m}$ for a partition $\mathcal{P}$.

\begin{prop}
\label{prop:entrop-estim}
Let $\mathcal{P}$ be a countable measurable partition of $I$ for $\mu$ such that $\mathcal{P}_{\mu^E}$ is finite for every $E \in \mathcal{E}_n^{M,m}$ and such that $g^{-i} \mathcal{P}$ is a measurable partition for $\mu_E$, when $i \in E$.
Then for any $m,m' \in \bbZ^+, M \in \bbZ_0^+$ and $n \in \mathcal{n}$, we have
\begin{align*}
\frac{1}{m'} \left ( \int \# E_n^{M,m}(x) d Leb_{A_n}(x) \right ) H_{\mu_n^{M,m}}(\mathcal{P}^{m'})
&\geq \sum\limits_{E \in \mathcal{E}_n^{M,m}} \int_E - \log Leb(\mathcal{P}_{x}^E \cap E \cap A_n) d Leb_{A_n}(x)\\
&\hspace{3em}+ Leb_{A_n}(E) \log Leb_{A_n}(E)\\
&\hspace{3em}+ Leb_{A_n}(E) \log Leb(A_n)\\
&\hspace{1em}- m' \sum\limits_{E \in \mathcal{E}_n^{M,m}} Leb_{A_n}(E) \log ( \# \mathcal{P}_{\mu^E}) \# \partial E
\end{align*}
where $\mathcal{P}_x^E$ is the element of the partition $\mathcal{P}^E$ that contains $x$.
\end{prop}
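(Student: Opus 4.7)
The plan is to bootstrap the Misiurewicz-type entropy bound provided by Lemma \autoref{lem:Misiurewicz}: I would first decompose $\mu_n^{M,m}$ as a convex combination of the auxiliary measures $\mu^E$ indexed by $\mathcal{E}_n^{M,m}$, apply concavity of $\lambda\mapsto H_\lambda(\mathcal{P}^{m'})$, then invoke Lemma \autoref{lem:Misiurewicz} atom by atom, and finally expand $H_{\mu_E}(\mathcal{P}^E)$ by hand.

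For the first step, partitioning $A_n$ according to the value of $E_n^{M,m}(x)$ and using $g^i_*(Leb_{A_n}|_E) = Leb_{A_n}(E)\, g^i_*\mu_E$ for each atom $E$ yields
\[
\mu_n^{M,m} \;=\; \frac{\sum_{E\in \mathcal{E}_n^{M,m}} Leb_{A_n}(E)\,\#E\,\mu^E}{\sum_{E\in \mathcal{E}_n^{M,m}} Leb_{A_n}(E)\,\#E},
\]
the denominator being exactly $\int \#E_n^{M,m}(x)\,dLeb_{A_n}(x)$. Concavity of entropy then gives
\[
\Bigl(\int \#E_n^{M,m}(x)\,dLeb_{A_n}(x)\Bigr) H_{\mu_n^{M,m}}(\mathcal{P}^{m'}) \;\geq\; \sum_{E} Leb_{A_n}(E)\,\#E\, H_{\mu^E}(\mathcal{P}^{m'}).
\]

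Next, for each fixed $E$ I would apply Lemma \autoref{lem:Misiurewicz} with $\lambda=\mu_E$, $T=g$, $F=E$ (as a subset of $\bbZ_0^+$) and $\mathcal{R}=\mathcal{P}$; the hypotheses are exactly what has been assumed on $\mathcal{P}$, $\lambda^F$ equals $\mu^E$ by definition, and the conclusion reads
\[
\frac{\#E}{m'} H_{\mu^E}(\mathcal{P}^{m'}) \;\geq\; H_{\mu_E}(\mathcal{P}^E) \;-\; m'\log(\#\mathcal{P}_{\mu^E})\,\#\partial E.
\]
Multiplying by $Leb_{A_n}(E)$, summing over $E$, and combining with the concavity inequality reduces the proof to evaluating $\sum_E Leb_{A_n}(E)\, H_{\mu_E}(\mathcal{P}^E)$. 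Since $\mu_E$ is the normalized Lebesgue measure on the atom $E\subset A_n$, we have $\mu_E(Q) = Leb(Q\cap E\cap A_n)/(Leb(A_n)\,Leb_{A_n}(E))$, so the identity
\[
-\log\mu_E(\mathcal{P}_x^E) \;=\; -\log Leb(\mathcal{P}_x^E\cap E\cap A_n) + \log Leb(A_n) + \log Leb_{A_n}(E),
\]
integrated against $d\mu_E$ and multiplied by $Leb_{A_n}(E)$, produces exactly the three bracketed terms on the right-hand side of the proposition.

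No step is genuinely difficult: the argument is essentially bookkeeping once Lemma \autoref{lem:Misiurewicz} is available. The only point to check carefully is that its hypotheses descend to each $\mu_E$, which is precisely what the two assumptions on $\mathcal{P}$ provide: $g^{-i}\mathcal{P}$ is a measurable partition for $\mu_E$ whenever $i\in E$, and $\mathcal{P}_{\mu^E}$ is finite. The latter also guarantees $H_{\mu_E}(\mathcal{P}^E) \leq \#E\log \#\mathcal{P}_{\mu^E} <+\infty$, so every rearrangement above is legitimate and the inequality indeed propagates through the sum.
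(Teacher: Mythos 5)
Your proposal is correct and follows essentially the same route as the paper: decompose $\mu_n^{M,m}$ as the convex combination $\sum_E \#E\,Leb_{A_n}(E)\,\mu^E$ (up to the normalization $\int \#E_n^{M,m}(x)\,dLeb_{A_n}(x)$), use concavity of entropy, apply Lemma \autoref{lem:Misiurewicz} with $\lambda=\mu_E$, $F=E$, $\lambda^F=\mu^E$, and expand $H_{\mu_E}(\mathcal{P}^E)$ via $\mu_E(\mathcal{P}_x^E)=Leb(A_n\cap E\cap\mathcal{P}_x^E)/(Leb(A_n)\,Leb_{A_n}(E))$. The bookkeeping and the verification of the lemma's hypotheses match the paper's argument.
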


Let us dissect this statement. The right hand side contains four terms, our goal will be to show that when $\mathcal{P}$ is a well-chosen partition, then the first term gives the Lyapunov exponent 
$\gls*{Chi_gmu} = \int \chi_g \: d \mu$ and the other three are negligeable.
In section \hyperref[ssec:partition]{\textbf{6.b.}}, we define this partition, show that its $\mu$-entropy is finite and that the fourth term is negligeable.
The second (resp. third) term is always negligeable by item $iv)$ (resp. $i)$) from Lemma \autoref{lem:def-An}.
Then, we estimate the first term in section \hyperref[ssec:gibbs]{\textbf{6.c.}}, by establishing a Gibbs inequality.\\

\begin{proof}[Proof of Proposition \autoref{prop:entrop-estim}]
We first consider the equality
$$
\sum\limits_{E \in \mathcal{E}_n^{M,m}} \# E Leb_{A_n}(E) \mu^E = \left ( \int \# E_n^{M,m}(x) d Leb_{A_n}(x) \right ) \mu_n^{M,m}
$$
By concavity of $\mu \mapsto H_{\mu} (\mathcal{P}^{m'})$, this equality gives
\begin{align*}
\frac{1}{m'} \left ( \int \# E_n^{M,m}(x) d Leb_{A_n}(x) \right ) H_{\mu_n^{M,m}}(\mathcal{P}^{m'}) &\geq \sum\limits_{E \in \mathcal{E}_n^{M,m}} Leb_{A_n}(E) \frac{\# E}{m'} H_{\mu^E} (\mathcal{P}^{m'})\\
\underset{\text{Lemma } \autoref{lem:Misiurewicz}}&{\geq} \sum\limits_{E \in \mathcal{E}_n^{M,m}} Leb_{A_n}(E) \left ( H_{\mu_E}(\mathcal{P}^E) - m' \log (\# (\mathcal{P}_{\mu^E})) \# \partial E \right )\\
&=\sum\limits_{E \in \mathcal{E}_n^{M,m}} Leb_{A_n}(E) \int - \log \mu_E(\mathcal{P}_x^E) d \mu_E(x)\\
&\hspace{1em}- m' \sum\limits_{E \in \mathcal{E}_n^{M,m}} Leb_{A_n}(E) \log (\# (\mathcal{P}_{\mu^E})) \# \partial E
\end{align*}
We conclude by using the following equality
$$
\mu_E(\mathcal{P}_x^E) = \frac{Leb_{A_n}(E \cap \mathcal{P}_x^E)}{Leb_{A_n}(E)}
= \frac{Leb(A_n \cap E \cap \mathcal{P}_x^E)}{Leb(A_n) Leb_{A_n}(E)}
$$
which gives
\begin{align*}
Leb_{A_n}(E) \int - \log \mu_E(\mathcal{P}_x^E) d \mu_E(x)
&= Leb_{A_n}(E)  \int_E - \log \frac{Leb(A_n \cap E \cap \mathcal{P}_x^E)}{Leb(A_n) Leb_{A_n}(E)} \frac{d Leb_{A_n}(x)}{Leb_{A_n}(E)}\\
& = \int_E  - \log Leb(A_n \cap E \cap \mathcal{P}_x^E) d Leb_{A_n}(x)\\
&\hspace{1em}+ Leb_{A_n}(E) \log Leb(A_n) + Leb_{A_n}(E) \log Leb_{A_n}(E)
\end{align*}
\end{proof}

\subsection{Definition of the partition $\mathcal{P}_q$}
\label{ssec:partition}

Let $q \in \bbZ^+$ be a parameter, we build a partition $\mathcal{Q}_q$ as follows
\begin{itemize}
\item[1)] for $k \in \bbZ$, note 
$\gls*{Iqk} = ] k/q ; (k+1)/q] + a$ where $a \in \bbR$ does not depend on $k$ and is chosen below,
\item[2)] for $k \in \bbZ$, note 
$\gls*{Qqk} = \{ x \in I \mid \log |g'(x)| \in I_{q,k} \}$.
\end{itemize}
Then we define the partition
$\gls*{Qq} = \{ Q_{q,k} \mid k \in \bbZ \}$. Hence, on each atom of $\mathcal{Q}_q$, the value of $\log |g'|$ does not vary by more than $1/q$.
Then, to choose $a$, recall that $\mathcal{J}$ is the measurable partition for $\mu$ into monotone branches of $g$. We choose $a \in \: ]- 1 /q ; 0[$ such that the border of $\mathcal{J} \vee \mathcal{Q}_q$ has zero $\mu$ and $\mu^{M,m}$-measure, for any $M,m \in \bbZ_0^+$.
Also notice that $k/q \in I_{q,k}$ for any $k \in \bbZ$.
We define
$$
\gls*{Pq} = \mathcal{J} \vee \mathcal{Q}_q
$$
This may be an countably infinite partition, and the purpose of this section is to prove the following statement:

\begin{prop}
\label{prop:partition}
The collection $\mathcal{P}_q$ is a measurable partition for $\mu$ satisfying:
\begin{itemize}
\item[1)] $H_{\mu}(\mathcal{P}_q) < + \infty$
\item[2)] For $E \in \mathcal{E}_n^{M,m}$, we have $\# (\mathcal{P}_q)_{\mu^E} < + \infty$
\item[3)] For $E \in \mathcal{E}_n^{M,m}$ and $i \in E$, the collection $g^{-i} \mathcal{P}_q$ is a measurable partition of $I$ for $\mu_E$
\item[4)] $\limsup\limits_{\mathcal{n} \ni n \to +\infty} \frac{1}{n}\sum\limits_{E \in \mathcal{E}_n^{M,m}} Leb_{A_n}(E) \# \partial E \log \# ((\mathcal{P}_q)_{\mu^E}) \underset{M \to +\infty}{\longrightarrow} 0$
\end{itemize}
\end{prop}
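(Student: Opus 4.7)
The plan is to establish the four items in order, using the following ingredients: the $\mu$-integrability of $\phi_g = \log|g'|$ from Proposition \autoref{prop:phi-integ}, the branch-counting estimate of Lemma \autoref{lem:brch-mon-DM}, the hyperbolic-times consequence of Lemma \autoref{lem:tpsHB} which places the iterates $g^i y$ for $i \in E_n^{M,m}(y)$ uniformly away from $\mathcal{C}_g$ by a distance $c(M) > 0$, and the estimate $\limsup_{n} \frac{1}{n} \int \#\partial E_n^{M,m}(x)\, dLeb_{A_n}(x) \leq 2/M$ from Lemma \autoref{lem:def-An}.

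\emph{Measurability and item 1.} The atoms of $\mathcal{J}$ and $\mathcal{Q}_q$ are pairwise disjoint, and their common refinement covers $I \setminus (\mathcal{C}_g \cup g^{-1}\{0\})$. This set is $\mu$-full: $\mu(\mathcal{C}_g) = 0$ follows from $\phi_g \in L^1(\mu)$, and the boundary of $\mathcal{Q}_q$ has zero $\mu$-mass by the choice of $a$. For $H_\mu(\mathcal{P}_q) < \infty$, I would split the entropy as $H_\mu(\mathcal{P}_q) \leq H_\mu(\mathcal{Q}_q) + H_\mu(\mathcal{J}\mid\mathcal{Q}_q)$. The first-moment bound $\sum_k \frac{|k|}{q}\, \mu(Q_{q,k}) \leq \int |\phi_g|\, d\mu < \infty$ implies that the integer-indexed distribution $k \mapsto \mu(Q_{q,k})$ has finite Shannon entropy (Gibbs comparison with a geometric law), giving $H_\mu(\mathcal{Q}_q) < \infty$. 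For the conditional term, Lemma \autoref{lem:brch-mon-DM} bounds the number of monotone branches meeting $Q_{q,k}$ by $Ce^{-(k-1)/(q(r'-1))} + 1$; combined with the Markov decay $\mu(Q_{q,k}) = O(1/|k|)$ from the first-moment estimate, a Mañé-style summation yields $H_\mu(\mathcal{J}\mid\mathcal{Q}_q) < \infty$.

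\emph{Items 2 and 3.} For item 2, the hyperbolic-times property localizes the support of $\mu^E$ inside $\{d(\cdot, \mathcal{C}_g) \geq c(M)\}$, where $|g'|$ is bounded below by some $\gamma(M) > 0$. Only $O(q\log(1/\gamma(M)))$ bands $Q_{q,k}$ are visited, and Lemma \autoref{lem:brch-mon-DM} bounds the number of monotone branches per band, so $\#(\mathcal{P}_q)_{\mu^E} < +\infty$. For item 3, $\mu_E$-a.e.\ $y$ lies in $A_n \subset I \setminus \mathcal{C}_n$, so $g^i y \notin \mathcal{C}_g$ for every $i < n$; moreover the $\mu$-null boundary of $\mathcal{Q}_q$ pulls back to a $Leb_{A_n}$-null set because $g^i$ is locally a diffeomorphism on $A_n$.

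\emph{Item 4 and main obstacle.} Item 2 yields a bound $\log \#(\mathcal{P}_q)_{\mu^E} \leq C(M)$, uniform in $E$ and $n$. Combined with Lemma \autoref{lem:def-An}, we obtain
\[
\limsup_{n} \frac{1}{n} \sum_{E \in \mathcal{E}_n^{M,m}} Leb_{A_n}(E)\, \#\partial E\, \log \#(\mathcal{P}_q)_{\mu^E} \leq \frac{2 C(M)}{M}.
\]
The hard part is to show $C(M) = o(M)$ so that this tends to zero as $M \to \infty$. The naive estimate $c(M) \asymp \sigma^M$ coming from hyperbolic times gives only $C(M) = O(M/(r'-1))$ for critical points of finite order, and degenerates further in the presence of flat critical points. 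This is the main obstacle, and I expect it to be overcome by a sharper counting of the atoms of $\mathcal{P}_q$ visited by the orbit, exploiting the sub-exponential $\underline{\mathcal{T}_n}$-growth bound in item~5 of Lemma \autoref{lem:RL} rather than the crude branch count on the support of $\mu^E$.
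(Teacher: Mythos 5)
Your items 1)--3) are essentially sound and close to the paper's own argument: measurability and item 3) reduce to $\mu(\mathcal{C}_g)=0$ (from $\phi_g\in\mathbb{L}^1(\mu)$) and $\mu_E(\mathcal{C}_{\infty})=0$ (since $A\cap\mathcal{C}_{\infty}=\emptyset$); item 1) is done in the paper by bounding $H_{\mu}(\mathcal{Q}_q)$ and $H_{\mu}(\mathcal{J})$ separately with Ma\~n\'e's lemma, the second via the diameter estimate $\rho(x)\geq \left( |g'(x)|/\|d^{r'}g\|_{\infty}\right)^{1/(r'-1)}$ rather than your conditional-entropy splitting, but the spirit is the same; item 2) is the paper's argument verbatim (Lemma \autoref{lem:tpsHB}.iii) plus Lemma \autoref{lem:brch-mon-DM}).

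The genuine gap is item 4), and your proposed escape route is not the right one. You correctly compute that item 2) only gives $\log \#(\mathcal{P}_q)_{\mu^E}\leq C(M)$ with $C(M)$ of order $M$ (indeed $\#\mathcal{J}_{\mu^E}\lesssim \|g'\|_{\infty}^{M/(r'-1)}$, and this is essentially sharp because $|g'(g^a x)|$ for $a\in E_n^{M,m}(x)$ really can be as small as $\|g'\|_{\infty}^{-M}$), so hoping for $C(M)=o(M)$ by ``sharper counting of the atoms visited'' --- e.g.\ via item 5) of the Reparametrization Lemma, which counts children in the tree, not atoms of $\mathcal{P}_q$ --- is not going to work. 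The missing ingredient is on the other factor: the paper does not use the crude bound $\limsup_n \int d_n(\partial E_n^{M,m}(x))\,dLeb_{A_n}(x)\leq 2/M$ (Lemma \autoref{lem:EnM}.ii), which as you note only yields $2C(M)/M=O(1)$, but the refined statement iii) of Lemma \autoref{lem:def-An}:
\begin{align*}
\limsup\limits_{\mathcal{n}\ni n\to+\infty} M\int d_n(\partial E_n^{M,m}(x))\,dLeb_{A_n}(x)\underset{M\to+\infty}{\longrightarrow}0,
\end{align*}
which is proved there from the monotone convergence $\beta_m^M\nearrow\beta^{\infty}$ together with Lemma \autoref{lem:EnM}.iii)--iv). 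With this $o(1/M)$ boundary-density estimate, a bound on $\log\#(\mathcal{P}_q)_{\mu^E}$ that is merely linear in $M$ (uniform in $E$ and $n$) is exactly enough, and item 4) follows by the computation you already set up. As a side remark, your worry about flat critical points is unfounded here: the count uses the lower bound $|g'(g^a x)|\geq \|g'\|_{\infty}^{-M}$ from Lemma \autoref{lem:tpsHB}.iii) directly, and Lemma \autoref{lem:brch-mon-DM} only needs $\mathcal{C}^{r'}$ regularity, with no nondegeneracy assumption on $\mathcal{C}_g$.
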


The core of the argument is that $\log |g'|$ is $\mu$-integrable (Proposition \autoref{prop:phi-integ}).
We point out that the proof of item $1)$ uses some ideas from the proof of Lemma 2 from Mañé's proof of Pesin's formula \cite{Mane_Pesin}. We will also use the following technical lemma:

\begin{lem}[Lemma 1 from \cite{Mane_Pesin}]
\label{lem:SETE-tech}
For $x_k \in [0;1], k \in \bbZ$, we have
$$
\sum\limits_{k \in \bbZ} - x_k \log x_k \leq \sum\limits_{k \in \bbZ} |k| x_k + c_0
$$
where $c_0 = 4(e(1 - e^{-1/2}))^{-1}$
\end{lem}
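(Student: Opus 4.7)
The plan is to prove the inequality term-by-term via a Young-type (Legendre-transform) bound, then sum the elementary contributions as a geometric series.

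First I would establish the following per-point bound: for any $a \geq 0$ and $x \in [0;1]$ (with the convention $0 \log 0 = 0$),
$$
-x \log x \leq a x + e^{-a-1}.
$$
This is just the Legendre dual of $x \mapsto -x \log x$: the function $x \mapsto -x \log x - a x$ on $(0;1]$ is concave, its derivative $-\log x - 1 - a$ vanishes at the interior critical point $x^{\ast} = e^{-a-1} \in (0;1]$, and the supremum value is $x^{\ast} = e^{-a-1}$.

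Next I apply this inequality separately for each $k \in \bbZ$ with the choice $a = |k|/2$, which yields
$$
-x_k \log x_k \leq \tfrac{|k|}{2} x_k + e^{-|k|/2 - 1}.
$$
Summing over $k$ and noting $x_k \geq 0$, so that $\tfrac{1}{2}\sum_k |k| x_k \leq \sum_k |k| x_k$, we obtain
$$
\sum_{k \in \bbZ} -x_k \log x_k \leq \sum_{k \in \bbZ} |k| x_k + e^{-1} \sum_{k \in \bbZ} e^{-|k|/2}.
$$

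Finally I evaluate the remaining geometric series:
$$
\sum_{k \in \bbZ} e^{-|k|/2} = 1 + 2 \sum_{k \geq 1} e^{-k/2} = \frac{1 + e^{-1/2}}{1 - e^{-1/2}} \leq \frac{2}{1 - e^{-1/2}},
$$
so the remainder term is bounded by $2 (e(1 - e^{-1/2}))^{-1} \leq 4 (e(1 - e^{-1/2}))^{-1} = c_0$, completing the proof. There is no real obstacle here — the argument is a direct elementary application of the Legendre transform of the entropy function $-x \log x$, and the exponent $|k|/2$ is chosen precisely so that the dual bounds $e^{-|k|/2 - 1}$ form a summable geometric sequence while the linear-in-$x_k$ remainder absorbs into $\sum |k| x_k$.
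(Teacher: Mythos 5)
Your proof is correct. Note that the paper itself gives no argument here --- it simply quotes Lemma~1 of Ma\~n\'e's paper --- so the comparison is with Ma\~n\'e's original proof, which proceeds by splitting the indices according to whether $x_k \geq e^{-|k|/2}$ or not: on the first set one bounds $-\log x_k \leq |k|/2$, and on the second one uses the monotonicity of $x \mapsto -x\log x$ on $[0,e^{-1}]$ to reduce to the convergent series $\sum_k \tfrac{|k|}{2} e^{-|k|/2}$. Your route via the pointwise Legendre-type bound $-x\log x \leq ax + e^{-a-1}$ (correctly verified: the concave function $x \mapsto -x\log x - ax$ attains its maximum at $x^{\ast}=e^{-a-1}\in(0,1]$, with maximal value $e^{-a-1}$) is a genuinely different and arguably cleaner decomposition: it replaces the case analysis by a single one-line inequality applied with $a=|k|/2$, and the resulting error series $e^{-1}\sum_k e^{-|k|/2}$ is evaluated exactly rather than estimated through an auxiliary bound on $te^{-t/2}$. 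As a bonus your computation shows the remainder is at most $2(e(1-e^{-1/2}))^{-1}$, i.e.\ half of the stated $c_0$, so the lemma holds with room to spare; the only cosmetic slip is the phrase ``the supremum value is $x^{\ast}=e^{-a-1}$'', which conflates the maximizer with the maximum, though here the two happen to coincide.
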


\begin{proof}[Proof of Proposition \autoref{prop:partition}]
By definition, atoms of $\mathcal{P}_q$ cover $I \backslash \mathcal{C}_g$, which is of full $\mu$-measure since $\log |g'| \in \mathbb{L}^1(\mu)$ (Proposition \autoref{prop:phi-integ}). Then, atoms of $\mathcal{Q}_q$ and $\mathcal{J}$ are disjoint, so $\mathcal{P}_q$ is a measurable partition for $\mu$.
The argument to prove $3)$ is similar:\\

\hspace{1em}- Proof of $3)$:
Notice that $g^{-i} \mathcal{C}_g \subset \mathcal{C}_{\infty}$.
Therefore, it is enough to show that $\mu_E(\mathcal{C}_{\infty}) = 0$, which is true since $A \cap \mathcal{C}_{\infty} = \emptyset$.
\vspace{1em}

\hspace{1em}- Proof of $1)$: We first show that $\mathcal{Q}_q$ is of finite $\mu$-entropy.
We will then show that $\mathcal{J}$ as well, which is the part inspired by Lemma 2 from \cite{Mane_Pesin}.
We have
\begin{align*}
H_{\mu}(\mathcal{Q}_q)
&= \sum\limits_{k \in \bbZ} - \mu(Q_{q,k}) \log \mu(Q_{q,k})\\
\underset{\text{Lemma } \autoref{lem:SETE-tech}}&{\leq} c_0 + \sum\limits_{k \in \bbZ} |k| \mu(Q_{q,k})\\
&= c_0 + \sum\limits_{k \in \bbZ} \int |k| \mathds{1}_{\{ x \in I \; \mid \; \log |g'(x)| \in I_{q,k} \}} d \mu(x)\\
&\leq c_0 + \sum\limits_{k \in \bbZ} \int (q |\log |g'(x)|| + 1) \mathds{1}_{\{ x \in I \; \mid \; \log |g'(x)| \in I_{q,k}\}} d \mu(x)\\
&= c_0 + 1 + q \int |\log |g'|| d \mu\\
\underset{\text{Proposition } \autoref{prop:phi-integ}}&{<} +\infty
\end{align*}
For $\mathcal{J}$, we will also use Lemma \autoref{lem:SETE-tech} and Proposition \autoref{prop:phi-integ}. 
We first examine the specificities of $\mathcal{J}$.
Recall that to define $\mathcal{J}$ (Definition \autoref{def:monot-branche}), we first divided $I$ according to the critical points of $g$, then we divided again some of these pieces into at most $|| g' ||_{\infty}$ pieces to ensure injectivity in case $I$ is the circle.
Therefore, to show that $\mathcal{J}$ has finite entropy, we may assume that $\mathcal{J}$ only comes from the division of $I$ according to the critical points of $g$.
However, this change of definition only holds for this proof.
Let $r' = \min(r,2)$, we have
$$
\forall x \in I, |g'(x)| \leq dist(x, \mathcal{C}_g)^{r'-1} || d^{r'} g||_{\infty}
$$
Therefore, if $x \notin \mathcal{C}_g$ and $\rho(x)$ denotes the size of the monotone branch containing $x$, then
$$
\rho(x) \geq \left ( \frac{|g'(x)|}{|| d^{r'} g ||_{\infty}} \right )^{\frac{1}{r' - 1}}
$$
Let $\mathcal{J}_n = \{ J \in \mathcal{J} \mid | \log \text{ diam } J| \in [ n ; n+1[ \} = \{ J_k^{(n)} \}_{k \in N_n}$ and $J_n = \bigcup\limits_{k \in N_n} J_k^{(n)}$.
Thus we have
$$
H_{\mu}(\mathcal{J}) = \sum\limits_{J \in \mathcal{J}} - \mu(J) \log \mu(J)\\
= \sum\limits_{n \in \bbZ_0^+} \sum\limits_{k \in N_n} - \mu(J_k^{(n)}) \log \mu(J_k^{(n)})
$$
Then, concavity of $\log$ implies, for any $n \in \bbZ_0^+$, that
\begin{align*}
\sum\limits_{k \in N_n} - \mu(J_k^{(n)}) \log \mu(J_k^{(n)})
= \mu(J_n)\sum\limits_{k \in N_n} \frac{\mu(J_k^{(n)})}{\mu(J_n)} \log \frac{1}{\mu(J_k^{(n)})}
\leq \mu(J_n) \log \frac{\# N_n}{\mu(J_n)}
\end{align*}
We therefore get
$$
H_{\mu}(\mathcal{J}) \leq \sum\limits_{n \in \bbZ_0^+} \mu(J_n) \log \# N_n - \mu(J_n) \log \mu(J_n)
$$
Then, any $J \in \mathcal{J}_n$ has length larger than $e^{-(n+1)}$, so $\# N_n \leq e^{n+1}$, so Lemma \autoref{lem:SETE-tech} implies that
$$
H_{\mu}(\mathcal{J}) \leq c_0 + 2 \sum\limits_{n \in \bbZ_0^+} n \mu(J_n)
$$
But notice that $J_n = \{ x \in I \mid | \log \rho(x)| \in [ n ; n+ 1[ \}$, hence
$$
H_{\mu}(\mathcal{J}) \leq c_0 + 2 \int |\log \rho(x)| d \mu(x) \leq
c_0 + 2 \int \frac{1}{r'-1} \left | \log |g'(x)| - \log || d^{r'} g ||_{\infty} \right | d \mu(x)
\underset{\text{Proposition } \autoref{prop:phi-integ}}{<} +\infty
$$
\vspace{1em}

\hspace{1em}- Proof of $2)$:
Recall the notation from the previous section: $(\mathcal{P}_q)_{\mu^E} = \{ P \in \mathcal{P}_q \mid \mu^E(P) > 0 \}$, where $E \in \mathcal{E}_n^{M,m}$.
Recall that $\mu^E = \frac{1}{\# E} \sum\limits_{a \in E} g^a_* (Leb_{A_n})_E$, thus
$$
\mu^E(P) > 0 \Rightarrow \exists a \in E, \exists x \in A_n \cap E, g^a x \in P
$$
Since $\mathcal{P}_q = \mathcal{Q}_q \vee \mathcal{J}$, we only have to show that $\# (\mathcal{Q}_q)_{\mu^E} < +\infty$ and $\# \mathcal{J}_{\mu^E} < +\infty$, independently.
Although we only have to show finiteness, we provide an upper bound that will be useful to prove item $4)$.
For $(\mathcal{Q}_q)_{\mu^E}$, we have
\begin{align*}
\# (\mathcal{Q}_q)_{\mu^E} &\leq \# \{ k \in \bbZ \mid \exists a \in E, \exists x \in A_n \cap E, \log |g'(g^a x)| \in I_{q,k} \}\\
&\leq \# \{ k \in \bbZ \mid \exists a \in E, \exists x \in A_n \cap E, k-1 < q \log |g'(g^a x)| < k+1 \}
\end{align*}
But when $x \in A_n \cap E$ and $a \in E$, item $iii)$ from Lemma \autoref{lem:tpsHB} gives $\log |g'(g^a x)| \geq - M \log || g' ||_{\infty}$, so
$$
\# (\mathcal{Q}_q)_{\mu^E} \leq q \log || g' ||_{\infty} + 1 - ( - q M \log || g' ||_{\infty} - 1) + 1 < + \infty
$$
Then for $\mathcal{J}_{\mu^E}$, using once again item $iii)$ from Lemma \autoref{lem:tpsHB} gives
\begin{align*}
\# \mathcal{J}_{\mu^E} &\leq \# \{ J \in \mathcal{J} \mid \exists a \in E, \exists x \in A_n \cap E, g^a x \in J \}\\
&\leq \# \left \{ J \in \mathcal{J} \mid \exists y \in J, |g'(y)| \geq  \frac{1}{|| g' ||_{\infty}^M} \right \}\\
\underset{\text{Lemma } \autoref{lem:brch-mon-DM}}&{\leq} C(r',g) || g' ||_{\infty}^{\frac{M}{r'-1}} + 1\\
&< + \infty
\end{align*}

\hspace{1em}- Proof of $4)$:
Notice that $\# ((\mathcal{P}_q)_{\mu^E}) \leq \# ((\mathcal{Q}_q)_{\mu^E}) \times \# \mathcal{J}_{\mu^E}$.
By using the upper bounds proved for item $2)$, and by choosing $M$ large enough, we get
\begin{align*}
\sum\limits_{E \in \mathcal{E}_n^{M,m}} Leb_{A_n}(E) \# \partial E \log \# ((\mathcal{P}_q)_{\mu^E}) 
&\leq \sum\limits_{E \in \mathcal{E}_n^{M,m}} Leb_{A_n}(E) \# \partial E \log \left [ 2M q \log || g' ||_{\infty} \times  \left ( C(r',g) || g' ||_{\infty}^{M / (r' - 1)} + 1 \right ) \right ]\\[0.5em]
&\leq \left (\frac{M}{r' - 1} \log (q C(r',g) ) \right ) \int \# \partial E_n^{M,m}(x) dLeb_{A_n}(x)
\end{align*}
We conclude with $iii)$ from Lemma \autoref{lem:def-An}, stating that $\limsup\limits_{\mathcal{n} \ni n \to +\infty} M \int d_n(\partial E_n^{M,m}(x)) d Leb_{A_n}(x) \underset{M \to +\infty}{\longrightarrow} 0$.
\end{proof}

\subsection{Gibbs inequality}
\label{ssec:gibbs}

Let $n \in \mathcal{n}, M \in \bbZ_0^+$ and $E \in \mathcal{E}_n^{M,m}$. Recall that $\phi_g = \log |g'|$, and note
$$
\gls*{PhigE} = \sum\limits_{k \in E} \phi_g \circ g^k
$$
In this section, we deal with the main term of Proposition \autoref{prop:entrop-estim}.
More precisely, we show the following:

\begin{prop}
\label{prop:Gibbs}
For some universal constant $C$ we have
\begin{align*}
\int_E - \log Leb(\mathcal{J}^E_x \cap \mathcal{Q}_{q,x}^E \cap E \cap A_n) d Leb_{A_n}(x)
&\geq \int_E \phi_g^E(x) \: d Leb_{A_n}(x) - Leb_{A_n}(E) \left ( \frac{\# E }{q} + \# \partial E \log \left ( \frac{C}{\varepsilon} \right ) \right )
\end{align*}
\end{prop}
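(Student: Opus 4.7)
The plan is to show the pointwise bound
\[
Leb(R) \prod_{k \in E}|g'(g^k x)| \leq e^{(\#E)/q}(C/\varepsilon)^{\#\partial E},
\]
where $R := \mathcal{J}^E_x \cap \mathcal{Q}^E_{q,x} \cap E \cap A_n$, and then integrate $-\log$ against $dLeb_{A_n}$ over the atom of $\mathcal{E}_n^{M,m}$ containing $x$ (on which $E$, $\#E$ and $\#\partial E$ are constant).

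First I would decompose $E = \bigsqcup_{j=1}^{\#\partial E/2}[\![a_j; b_j[\![$, set $b_0 := 0$, and cover $R$ by reparametrizations. Since every $y \in R$ satisfies $b_m \in E(y)$ (where $b_m$ is the last endpoint of $E$), item $4)$ of the \hyperref[lem:RL]{Reparametrization Lemma} gives $R \subset \bigcup_\theta R_\theta$ with $\theta = \theta_{i^{b_m}}$ ranging over appropriate branches in $\overline{\mathcal{T}_{b_m}}$ and $R_\theta := R \cap (\sigma\circ\theta)([-1/3; 1/3])$. On each such piece, $g^{b_m}\circ\sigma\circ\theta$ is an $\varepsilon$-bounded reparametrization, so $g^{b_m}$ is injective on $R_\theta$ with image of Lebesgue measure at most $2\varepsilon$, and Lemma \autoref{lem:distortion} bounds its distortion by $3/2$. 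The usual change of variables then yields $Leb(R_\theta) \leq 3\varepsilon/|(g^{b_m})'(y_0)|$ for any reference point $y_0 \in R_\theta$.

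Next I would split $|(g^{b_m})'(y_0)| = \prod_{k \in E}|g'(g^k y_0)|\cdot\prod_{k \notin E,\, k<b_m}|g'(g^k y_0)|$. The $\mathcal{Q}^E_q$-atom constraint ($y_0$ and $x$ lying in the same atom) gives $\prod_{k \in E}|g'(g^k y_0)| \geq e^{-(\#E)/q}\prod_{k \in E}|g'(g^k x)|$, while Lemma \autoref{lem:tpsHB}.$i)$ applied to each gap $[\![b_{j-1}; a_j[\![$, using $a_j \in E(y_0)$ (which holds since $E \subset E(y_0)$), yields $\prod_{k \notin E,\, k<b_m}|g'(g^k y_0)| \geq 10^{b_m - \#E} \geq 1$. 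Combining these gives
\[
Leb(R_\theta)\prod_{k \in E}|g'(g^k x)| \leq 3\varepsilon \cdot e^{(\#E)/q}.
\]

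It then remains to sum over the cover: $Leb(R)\prod_{k \in E}|g'(g^k x)| \leq 3N\varepsilon \cdot e^{(\#E)/q}$, where $N$ is the number of branches $\theta$ needed to cover $R$. The main obstacle is to establish $N \leq (C/\varepsilon)^{\#\partial E}/(3\varepsilon)$, which I would prove by a tree-counting argument: at levels $k$ with $k-1 \in E$, the $\mathcal{Q}^E_q$-atom constrains the indices $k_k(y), k'_k(y)$ of any $y \in R$ to a bounded set of values (so item $4)$ of the \hyperref[lem:RL]{Reparametrization Lemma} restricts the candidate branches with matching labels), while at levels $k-1 \notin E$ item $(iv)$ of Lemma \autoref{lem:decoup-arbre-geom} provides the uniform local multiplicity bound of $100$. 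The exponent $\#\partial E$ in $(C/\varepsilon)^{\#\partial E}$ should then emerge because each of the $\#\partial E$ endpoints of $E$ sits at a level where the branching ``resets'' and contributes exactly one factor of $C/\varepsilon$ to the count.
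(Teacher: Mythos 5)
Your reduction to the pointwise bound $Leb(R)\prod_{k\in E}|g'(g^kx)|\leq e^{\#E/q}(C/\varepsilon)^{\#\partial E}$ is exactly the right target (it is what the paper proves via Lemma \autoref{lem:leb-atome}), your treatment of the $E$-times through the $\mathcal{Q}_q$-atoms is sound, and your use of Lemma \autoref{lem:tpsHB}.$i)$ together with $\partial E\subset E(y_0)$ (Lemma \autoref{lem:EnM}.$i)$) to get $\prod_{k<b_d,\,k\notin E}|g'(g^ky_0)|\geq 1$ at the reference point is correct (the distortion constant is $9/4$ rather than $3/2$, since you must compose the distortion of $g^{b_d}\circ\sigma\circ\theta$ with that of $\sigma\circ\theta$, but that is cosmetic). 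The genuine gap is the step you yourself flag: the count $N\leq (C/\varepsilon)^{\#\partial E}/(3\varepsilon)$ of level-$b_d$ branches of $\overline{\mathcal{T}_{b_d}}$ meeting $R$. Neither ingredient of your sketch delivers it. At gap times $R$ imposes no constraint at all on the orbit, so \emph{every} descendant of a given piece across a gap $[\![b_j;a_{j+1}[\![$ can meet $R$; by item $5)$ of the \hyperref[lem:RL]{Reparametrization Lemma} the number of such descendants can grow exponentially in the gap length, and gaps of $E_n^{M,m}(x)$ can be arbitrarily long (that is precisely where geometric times are sparse), so no bound of the form $(C/\varepsilon)^{\#\partial E}$, independent of the gap lengths, can come from multiplicity counting there; item $iv)$ of Lemma \autoref{lem:decoup-arbre-geom} only controls overlap at a single subdivision step and does not help. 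At $E$-times, knowing the labels $(k_k,k'_k)$ restricts you to label-matching children, but item $5)$ bounds their number per parent only by $C_r\log\|g'\|_\infty e^{\max(\log\|g'\|_\infty,\,k'_k/(r-1))}$, a $g$-dependent constant larger than $1$; moreover the constraint ``$g^ky$ lies in a fixed monotone branch and a fixed $Q_{q,j}$'' does not force $O(1)$ children per parent, since $Q_{q,j}$ intersected with a monotone branch can be highly disconnected for a general $\mathcal{C}^r$ map. Even in the best case this scheme produces a multiplicative loss per $E$-time, i.e.\ after taking logarithms an error of order $\#E$ times a constant that does not go to $0$ with $q$; such an error survives the normalization by $n$ in Proposition \autoref{prop:formule-entropie} and would destroy the entropy formula, whereas the only admissible per-$E$-time loss is the $1/q$ term.

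The missing idea is the paper's replacement of branch counting by a measure-theoretic sum over the gaps. Instead of covering $R$ by level-$b_d$ reparametrizations, one refines $R$ by the partition $\mathcal{V}=\bigvee_{j}g^{-b_j}\mathcal{V}_{a_{j+1}}$ with $\mathcal{V}_{a_{j+1}}=\mathcal{B}^{\{a_{j+1}-b_j\}}\vee\mathcal{J}^{a_{j+1}-b_j}$: each atom $V_{a_{j+1}}$ meeting $R$ lies, thanks to the geometric time $a_{j+1}$ and Lemmas \autoref{lem:repar-monot} and \autoref{lem:prop-atomes}, inside a single reparametrized piece, has distortion $\leq 9/4$ and $g^{a_{j+1}-b_j}$-image of length $\geq\varepsilon/27$. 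The change of variables over the gap then converts the factor $\bigl(\inf|(g^{a_{j+1}-b_j})'|\bigr)^{-1}$ into $\frac{C}{\varepsilon}\,Leb(V_{a_{j+1}})$, one factor $C/\varepsilon$ per gap, and summing over the atoms of $\mathcal{V}$ uses $\sum_{V_{a_{j+1}}}Leb(V_{a_{j+1}})\leq 1$ rather than any cardinality bound. This is what produces $(C/\varepsilon)^{\#\partial E}$ with no loss proportional to $\#E$ or to the gap lengths; without this (or an equivalent device) your covering argument cannot close.
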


Fix $x \in A_n \cap E$. Let us note $b_0 = 0$ and
$$
\gls*{E} = \bigcup\limits_{j = 1}^d [\![ a_j ; b_j [\![
$$
We will estimate the Lebesgue measure of the following set:
$$
\gls*{R} := \mathcal{J}^E_x \cap \mathcal{Q}_{q,x}^{E} \cap E \cap A_n
$$
The strategy is to cover $R$ by small sets where the distortion is bounded, then to use the change of variable formula to estimate the Lebesgue measure of each of these sets.

\begin{lem}[Change of variable formula]
\label{lem:chgt-var}
Let $k \in \bbZ_0^+$. If $J$ is a monotone branch of $g^k$ and if $A,B \subset I$, then
$$
Leb(J \cap A \cap g^{-k} B) \leq \frac{1}{\inf\limits_{J \cap A} |(g^k)'|} Leb(B)
$$
\end{lem}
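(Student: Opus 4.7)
The plan is to combine the injectivity of $g^k$ on a monotone branch with the standard one-dimensional change-of-variables formula; nothing deeper is required.

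First, I would use Definition \autoref{def:monot-branche} to establish that $g^k$ restricted to $J$ is a $\mathcal{C}^1$ injection onto its image with non-vanishing derivative. In the interval case this is immediate from the fact that $(g^k)'$ does not vanish on $J$; in the circle case injectivity is precisely what the extra condition built into Definition \autoref{def:monot-branche} was designed to guarantee, as discussed in Section \hyperref[ssec:diff-int-cercle]{\textbf{2.d.}} Hence $g^k|_J$ is a $\mathcal{C}^1$ diffeomorphism onto its image $g^k(J)$, modulo a Lebesgue-null boundary.

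Next, I apply the substitution $y = g^k(x)$ to the indicator of $J \cap A \cap g^{-k} B$. Writing $\psi = (g^k|_J)^{-1}$, the one-dimensional change-of-variables formula gives
$$
Leb(J \cap A \cap g^{-k} B) \;=\; \int_{g^k(J \cap A \cap g^{-k} B)} \frac{dy}{|(g^k)'(\psi(y))|}.
$$
For any $y$ in this image, $\psi(y) \in J \cap A$, so $|(g^k)'(\psi(y))| \geq \inf_{J \cap A} |(g^k)'|$. Moreover $g^k(J \cap A \cap g^{-k} B) \subset B$. Pulling the constant $(\inf_{J \cap A} |(g^k)'|)^{-1}$ out of the integral and bounding the remaining Lebesgue integral by $Leb(B)$ yields the claimed inequality.

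There is no real obstacle here: once injectivity on a monotone branch is granted, the lemma is the routine substitution formula. The only point requiring care is the circle case, where one must not double-count preimages of a point of $B$; this has already been arranged by the definition of monotone branch.
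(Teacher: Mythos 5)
Your proof is correct and follows essentially the same route as the paper: injectivity of $g^k$ on the monotone branch $J$ plus the one-dimensional substitution formula, with the infimum bound coming from $\psi(y) \in J \cap A$ and the inclusion $g^k(J \cap A \cap g^{-k}B) \subset B$. The paper merely writes the same computation in the opposite direction, bounding $Leb(B) \geq Leb(g^k(J \cap A \cap g^{-k}B)) = \int_{J \cap A \cap g^{-k}B} |(g^k)'|\,dx$ from below by the infimum times the measure of the set, so the two arguments are equivalent.
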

\begin{proof}
We have
\begin{align*}
Leb(B) \geq Leb(g^k(J \cap A \cap g^{-k} B))
\underset{J \text{ monotone branch}}&{=} \int_{A \: \cap \: J \: \cap \: g^{-k} B} |(g^k)'(x)| \: dx\\
&\geq \left ( \inf\limits_{A \: \cap \: J \: \cap \: g^{-k} B} |(g^k)'| \right ) Leb(A \cap J \cap g^{-k} B)\\
&\geq \left ( \inf\limits_{A \: \cap \: J} |(g^k)'| \right ) Leb(A \cap J \cap g^{-k} B)
\end{align*}
\end{proof}

The $g^k$'s that we will consider for the change of variable are the iterates of $g$ at the times of $\partial E$.
Such iterates are of two different types, the ones coming from intervals of times of $E$, of the form $[\![ a_j ; b_j [\![$, and others from intervals of $[\![ 0 ; b_d [\![ \backslash E$.
Note that for a point in $R$, we know to which atoms of $\mathcal{J}$ and $\mathcal{Q}_q$ the points of its orbit belong.
So at these times, the only factor that will come out of the variable change formula is $\phi_g^E(x)$, with some error term.
The reason why the change of variable formula directly gives $\phi_g^E$ at point $x$, and not some integral of $\phi_g^E$, is the following lemma:

\begin{lem}
\label{lem:Qq-util-gibbs}
Let $k \in \bbZ_0^+$. If $y$ and $z$ are in the same atom of $\mathcal{Q}_{q}^k$, then
$$
\left | \log |(g^k)'(y)| - \log |(g^k)'(z)| \right | \leq \frac{k}{q}
$$
\end{lem}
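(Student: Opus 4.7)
The plan is to unpack the definition of the refined partition $\mathcal{Q}_q^k = \bigvee_{i=0}^{k-1} g^{-i} \mathcal{Q}_q$ and then apply the chain rule term by term, using that each atom of $\mathcal{Q}_q$ has oscillation at most $1/q$ with respect to $\log|g'|$.

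First I would observe that by construction of $\mathcal{Q}_q$, two points $u,v$ lying in the same atom $Q_{q,j}$ both satisfy $\log|g'(u)|, \log|g'(v)| \in I_{q,j}$, an interval of length $1/q$. Hence
\[
\bigl|\log|g'(u)| - \log|g'(v)|\bigr| \leq \frac{1}{q}.
\]
Now, $y$ and $z$ being in the same atom of $\mathcal{Q}_q^k$ means that for every $i \in [\![0;k-1]\!]$, the iterates $g^i(y)$ and $g^i(z)$ lie in the same atom of $\mathcal{Q}_q$, so the above pointwise estimate applies at each step of the orbit.

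Next I would apply the chain rule. Since $y$ and $z$ are in the same atom of $\mathcal{Q}_q^k$, in particular they are not in $g^{-i}\mathcal{C}_g$ for any $i < k$ (otherwise $g^i y$ would not belong to any $Q_{q,j}$), so $(g^k)'(y)$ and $(g^k)'(z)$ are nonzero and
\[
\log\bigl|(g^k)'(y)\bigr| - \log\bigl|(g^k)'(z)\bigr|
= \sum_{i=0}^{k-1} \bigl(\log|g'(g^i y)| - \log|g'(g^i z)|\bigr).
\]
Taking absolute values and summing the pointwise estimates yields
\[
\bigl|\log|(g^k)'(y)| - \log|(g^k)'(z)|\bigr| \leq \sum_{i=0}^{k-1} \frac{1}{q} = \frac{k}{q},
\]
which is the desired inequality.

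There is no real obstacle here: the statement is essentially a telescoping consequence of the chain rule, and the whole point of choosing $\mathcal{Q}_q$ to partition $I$ according to $1/q$-bins of $\log|g'|$ is to make this estimate automatic. The only mild care needed is to justify that $g'$ does not vanish along the orbit, which follows because the atoms of $\mathcal{Q}_q$ avoid $\mathcal{C}_g$ by definition.
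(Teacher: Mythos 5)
Your proof is correct and follows exactly the argument the paper intends: the paper's one-line proof (``the $I_{q,k}$'s have length $1/q$'') is shorthand for precisely your chain-rule telescoping, using that $g^i y$ and $g^i z$ fall in the same $1/q$-bin of $\log|g'|$ for each $i < k$. Your added remark that the atoms avoid $\mathcal{C}_g$, so the derivatives along the orbit are nonzero, is a sensible bit of care that the paper leaves implicit.
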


\begin{proof}
Recall that the $I_{q,k}$'s from section \hyperref[ssec:partition]{\textbf{6.b.}} have a length of $1/q$.
\end{proof}

However, for times of $[\![ 0 ; b_d [\![ \backslash E$, the set $R$ may intersect many monotone branches.
But we can only apply the change of variable formula to one monotone branch, so we will have to sum what the change of variable formula gives over all monotone branches.
More precisely, we will define a new partition $\mathcal{V}$ such that we can apply the change of variable formula to each of its atoms, then take the trace of $\mathcal{V}$ over $R$.
First let
$\gls*{Boules}$ be a partition of $I$ whose atoms are intervals of size between $\varepsilon / 27$ and $2 \varepsilon / 28$.
Then, for a partition $\mathcal{R}$ and $n \in \bbZ_0^+$, recall that
$
\mathcal{R}^{\{ n \}} = g^{-n} \mathcal{R}$ and 
$\mathcal{R}^n = \bigvee\limits_{i=0}^{n-1} g^{-i} \mathcal{R}$.
For $j \in [\![ 0 ; d-1 ]\!]$, we define
$$
\gls*{Vaj} = \mathcal{B}^{\{a_{j+1} - b_j \}} \vee \mathcal{J}^{a_{j+1}-b_j} 
$$
We mention that $\mathcal{V}_{a_{j+1}}$ is not a measurable partition of $I$ for $Leb$, since $(g^{a_{j+1}-b_j})'$ may be zero over a set of positive Lebesgue measure.
However, $g^{-b_j} \mathcal{V}_{a_{j+1}}$ does cover $R$ with disjoint atoms, which will be sufficient since we only want to estimate the Lebesgue measure of $R$.\\

To get some properties of the atoms of $\mathcal{V}_{a_{j+1}}$, we will once again use the reparametrizations obtained through the \hyperref[lem:RL]{Reparametrization Lemma} and fixed at the beginning of section \hyperref[sec:mes-emp]{\textbf{5.}} for the definitions, recall that we noted them $\theta_{i^n}$.
Our choice of definition for what a reparametrization and a monotone branch are (sections \hyperref[ssec:reparam-bornee]{\textbf{2.a.}} and \hyperref[ssec:diff-int-cercle]{\textbf{2.d.}}) gives the following lemma:

\begin{lem}
\label{lem:repar-monot}
For $k,l \in \bbZ_0^+$, the interval $(g^k \circ \sigma \circ \theta_{i^{k+l}})_*$ is contained in the closure of a monotone branch of $g^l$.
\end{lem}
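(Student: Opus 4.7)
The plan is to set $\phi := g^k \circ \sigma \circ \theta_{i^{k+l}}$ and write its image as $\phi_* = [c ; d]$. The goal then reduces to exhibiting a connected component $]a ; b[$ of the set $\{ y \in I \mid (g^l)'(y) \neq 0 \text{ and } g^l(y) \neq 0 \}$ that contains $]c ; d[$; this would force $[c ; d] \subset [a ; b]$, i.e.\ $\phi_*$ sits inside the closure of the monotone branch $[a ; b[$ of $g^l$, in the sense of Definition \autoref{def:monot-branche}.

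First I would invoke item $1)$ of the \hyperref[lem:RL]{Reparametrization Lemma}, giving that $\sigma \circ \theta_{i^{k+l}}$ is $(k+l, \varepsilon)$-bounded for $g$. Both $\phi$ and $g^l \circ \phi = g^{k+l} \circ \sigma \circ \theta_{i^{k+l}}$ are therefore $\varepsilon$-bounded reparametrizations; by definition their derivatives do not vanish on $[-1 ; 1]$, so $\phi$ is a homeomorphism from $[-1 ; 1]$ onto $[c ; d]$, and the chain rule forces $(g^l)' \neq 0$ on all of $\phi_*$, a fortiori on $]c ; d[$.

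Next, the defining clause $(g^l \circ \phi)(]-1 ; 1]) \cap \{ 0 \} = \emptyset$ of a reparametrization (stated at the beginning of section \hyperref[ssec:reparam-bornee]{\textbf{2.a.}}) tells us that $g^l$ does not vanish on $\phi(]-1 ; 1])$. By strict monotonicity of $\phi$ onto $[c ; d]$, this image equals either $]c ; d]$ or $[c ; d[$ according to orientation, and in either case contains $]c ; d[$. Combining with the previous paragraph, $]c ; d[$ is a connected open subset of $\{ (g^l)' \neq 0 \text{ and } g^l \neq 0 \}$, hence lies in a unique connected component $]a ; b[$ of that set, whence the lemma.

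I do not anticipate any real obstacle here; the one point requiring some care is the asymmetric half-open convention $]-1 ; 1]$ built into the very definition of a reparametrization, which is exactly what allows one endpoint of $[c ; d]$ to possibly be sent to $0$ by $g^l$ without contradicting the setup, while still guaranteeing the non-vanishing of $g^l$ on the open interval $]c ; d[$ that the argument needs.
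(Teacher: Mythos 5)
Your proof is correct, but it is not quite the argument of the paper. You work directly at the level of $g^l$: from item $1)$ of the \hyperref[lem:RL]{Reparametrization Lemma} you only use the two reparametrizations $\phi = g^k\circ\sigma\circ\theta_{i^{k+l}}$ and $g^l\circ\phi = g^{k+l}\circ\sigma\circ\theta_{i^{k+l}}$, get $(g^l)'\neq 0$ on $\phi_*$ by the chain rule, and get $g^l\neq 0$ on $int(\phi_*)$ from the convention $(g^l\circ\phi)(\,]-1;1]\,)\cap\{0\}=\emptyset$; this places $int(\phi_*)$ inside a connected component of $\{(g^l)'\neq 0 \text{ and } g^l\neq 0\}$, i.e.\ inside a monotone branch of $g^l$ in the literal sense of Definition \autoref{def:monot-branche} applied to $g^l$, and your handling of the half-open convention (one endpoint of $\phi_*$ may be sent to $0$) is exactly right. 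The paper instead argues iterate by iterate: for each $i\in[\![0;l-1]\!]$ it shows $g'\neq 0$ on $(g^{i+k}\circ\sigma\circ\theta_{i^{k+l}})_*$ and that the interior of its $g$-image avoids $0$, so each intermediate interval sits in an atom of $\mathcal{J}$, and the conclusion is containment in the closure of an atom of $\mathcal{J}^l=\bigvee_{i<l}g^{-i}\mathcal{J}$. The paper's conclusion is formally stronger (every $\mathcal{J}^l$-atom lies in a component of $\{(g^l)'\neq 0,\, g^l\neq 0\}$, not conversely, since intermediate iterates may hit $0$), and it is the form that plugs most directly into the proof of Lemma \autoref{lem:prop-atomes}, where the competing branch containing the atom $V$ is by construction an atom of $\mathcal{J}^{a_{j+1}-b_j}$. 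Your weaker version still suffices there, because the $\mathcal{J}^l$-atom containing $V$ lies in the same component once the two sets intersect, and $g^l$ is injective on such a component (nonvanishing derivative plus the image avoiding the marked point $0$, which is the whole purpose of the convention in Definition \autoref{def:monot-branche}); but if you use your route you should make this compatibility explicit, whereas the paper's iterate-by-iterate formulation gets it for free.
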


\begin{proof}
By item $1)$ of the \hyperref[lem:RL]{Reparametrization Lemma}, the map $g^k \, \circ \, \sigma \, \circ \, \theta_{i^{k+l}}$ is an $(l, \varepsilon)$-bounded reparametrization for $g$.
In particular, for any $i \in [\![ 0 ; l-1 ]\!]$, the derivative of $g \circ \left ( g^{i+k} \circ \sigma \circ \theta_{i^{k+l}} \right )$ does not vanish, hence the derivative of $g$ does not vanish on $\left ( g^{i+k} \circ \sigma \circ \theta_{i^{k+l}} \right )_*$.
Furthermore, the interior $int \left (g \circ \left ( g^{i+k} \circ \sigma \circ \theta_{i^{k+l}} \right ) \right)_*$ does not intersect the point $0$ in $I$.
Then, the set $int \left ( (g^{i+k} \circ \sigma \circ \theta_{i^{k+l}})_* \right )$ is an interval, so it is inside an atom of $\mathcal{J}$.
Therefore, the interval $(g^k \circ \sigma \circ \theta_{i^{k+l}})_*$ is contained in the closure of an atom of $\mathcal{J}^l$.
\end{proof}

We now show that the atoms of $\mathcal{V}_{a_{j+1}}$ have bounded distortion and a size bounded from below.

\begin{lem}
\label{lem:prop-atomes}
Let $j \in [\![ 0 ; d-1 ]\!]$ and $V$ be an atom of $\mathcal{V}_{a_{j+1}}$ such that $g^{-b_j} V$ intersects $R$.
\begin{itemize}
\item[i)] For $y,z \in V$, we have $|(g^{a_{j+1} - b_j})'(y)| \leq \frac{9}{4} |(g^{a_{j+1} - b_j})'(z)|$
\item[ii)] The atom $V$ satisfies
$$
Leb(g^{a_{j+1}-b_j}(V)) \geq \varepsilon / 27
$$
\end{itemize}
\end{lem}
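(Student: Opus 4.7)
My plan is to produce, from a witness point $w \in R \cap g^{-b_j}(V)$ (which exists by hypothesis), a single $\varepsilon$-bounded reparametrization $\eta$ whose image contains $V$. First, I would observe that $w \in R \subset A_n \cap E$ satisfies $E_n^{M,m}(w) = E$, so item $i)$ of Lemma \autoref{lem:EnM} gives $a_{j+1} \in \partial E \subset E_p(w)$. By the definition of $E_p$, I would then select $i^{a_{j+1}} \in \overline{\mathcal{T}_{a_{j+1}}}$ (with the matching $k'$-labels) and some $r_w \in [-1/3; 1/3]$ such that $w = \sigma \circ \theta_{i^{a_{j+1}}}(r_w)$, set $k := a_{j+1} - b_j$, and define $\eta := g^{b_j} \circ \sigma \circ \theta_{i^{a_{j+1}}}$. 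Item $1)$ of the \hyperref[lem:RL]{Reparametrization Lemma} makes $\eta$ and $g^k \circ \eta = g^{a_{j+1}} \circ \sigma \circ \theta_{i^{a_{j+1}}}$ both $\varepsilon$-bounded reparametrizations, while item $3)$ combined with Lemma \autoref{lem:distortion} yields $|(g^k \circ \eta)'| \geq \varepsilon/9$ throughout $[-1;1]$.

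The heart of the proof would be the inclusion $V \subset \eta_*$. Lemma \autoref{lem:repar-monot} places $\eta_*$ inside the closure of the monotone branch $J^k$ of $g^k$ containing $g^{b_j}w = \eta(r_w)$, on which $g^k$ is injective. Since $r_w \in [-1/3;1/3]$, the point $g^{a_{j+1}}w = (g^k \circ \eta)(r_w)$ lies in $(g^k \circ \eta)([-1/3;1/3])$; the lower bound $|(g^k \circ \eta)'| \geq \varepsilon/9$ integrated over each of the two outer thirds $[-1;-1/3]$ and $[1/3;1]$ then forces $g^{a_{j+1}}w$ to sit at distance at least $2\varepsilon/27$ from each endpoint of $(g^k \circ \eta)([-1;1])$. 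As $B$ has diameter at most $2\varepsilon/28 < 2\varepsilon/27$ and contains $g^{a_{j+1}}w$, it must be contained in $g^k(\eta_*)$; the injectivity of $g^k|_{J^k}$ then delivers $V = J^k \cap g^{-k}(B) \subset \eta_*$.

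Both statements would drop out. For $i)$, writing $y = \eta(s_y)$ and $z = \eta(s_z)$ in $V \subset \eta_*$ and applying Lemma \autoref{lem:distortion} to both $\eta$ and $g^k \circ \eta$ gives
\[
\frac{|(g^k)'(y)|}{|(g^k)'(z)|} = \frac{|(g^k \circ \eta)'(s_y)|}{|(g^k \circ \eta)'(s_z)|} \cdot \frac{|\eta'(s_z)|}{|\eta'(s_y)|} \leq \frac{3}{2} \cdot \frac{3}{2} = \frac{9}{4}.
\]
For $ii)$, the injectivity of $g^k$ on $J^k$ together with $B \subset g^k(J^k)$ yields $g^k(V) = B$, so $Leb(g^k(V)) = Leb(B) \geq \varepsilon/27$. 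The main obstacle is the scale-matching in the definition of $\mathcal{B}$: the ball diameters, constrained to lie between $\varepsilon/27$ and $2\varepsilon/28$, are arranged precisely so that the middle-third localization of $g^{a_{j+1}}w$ drags the whole ball $B$ into $g^k(\eta_*)$, and nothing else in the argument has any slack.
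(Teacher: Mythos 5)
Your proposal is correct and follows essentially the same route as the paper's proof: you use the geometric time $a_{j+1}\in\partial E\subset E_p(w)$ of a witness point to produce the reparametrization $\theta_{i^{a_{j+1}}}\in\overline{\mathcal{T}_{a_{j+1}}}$, the bound $|(g^{a_{j+1}}\circ\sigma\circ\theta_{i^{a_{j+1}}})'|\geq\varepsilon/9$ to trap the ball $B$ inside $(g^{a_{j+1}}\circ\sigma\circ\theta_{i^{a_{j+1}}})_*$, Lemma \autoref{lem:repar-monot} plus injectivity on the monotone branch to get $V\subset(g^{b_j}\circ\sigma\circ\theta_{i^{a_{j+1}}})_*$, and then the distortion Lemma \autoref{lem:distortion} twice for the factor $9/4$ and the identification $g^{a_{j+1}-b_j}(V)=B$ for the lower bound $\varepsilon/27$. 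Your explicit injectivity argument in place of the paper's ``the two branches coincide'' phrasing is only a presentational variant, not a different proof.
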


\begin{proof}
Let $y_0 \in R$ be such that $g^{b_j} y_0 \in V$. Before proving the two items, we make some preliminary construction summarized in Figure 4 and detailed on the next page:
\vspace{0em}
\begin{figure}[h!]
\hspace{-2em}\includegraphics[width=47em]{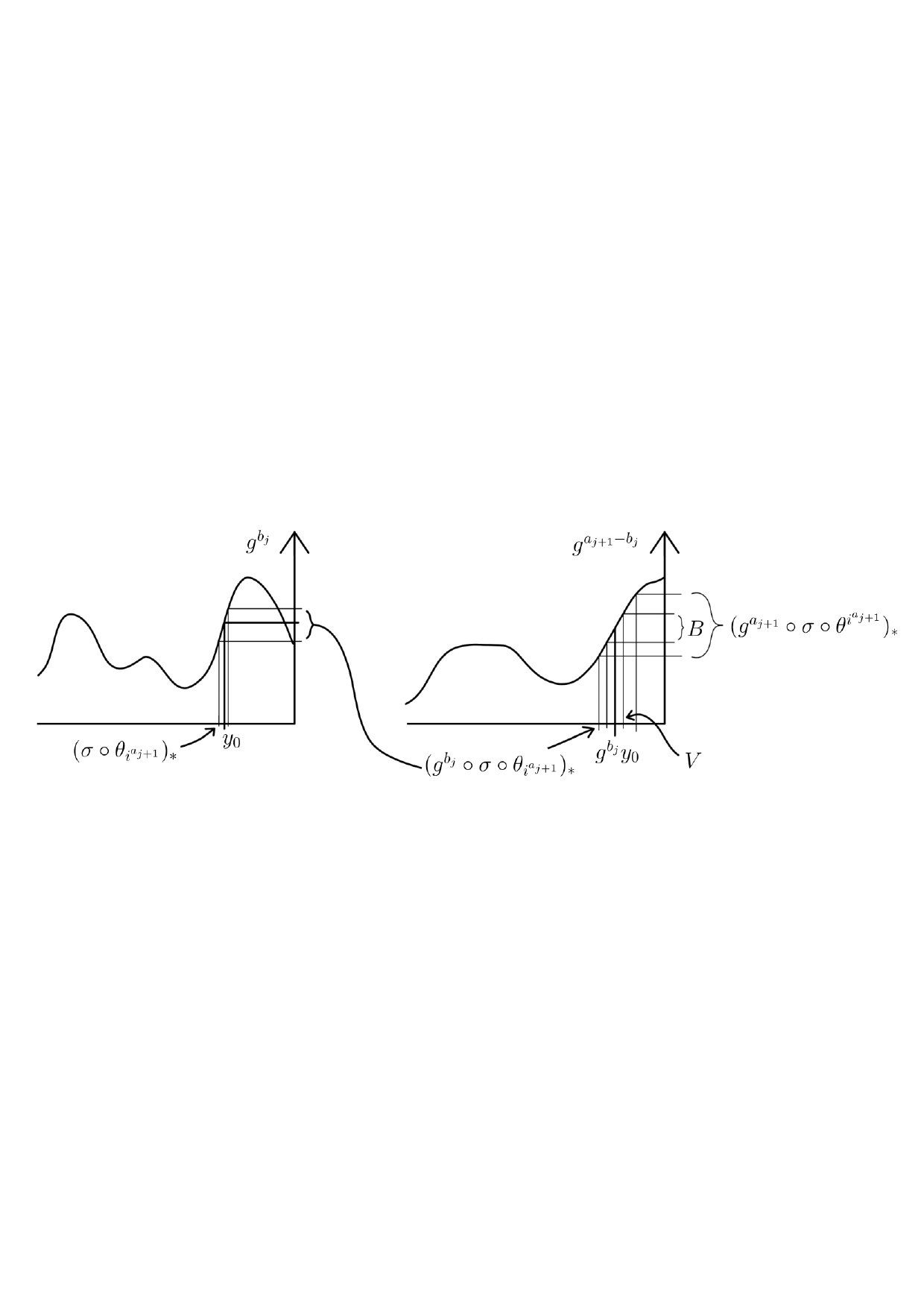}
\label{Figure 4}
\vspace{0em}
\caption{We prove that $y_0$ is in some $\left ( \sigma \circ \theta_{i^{a_{j+1}}} \right )_*$ which when iterated by $g^{a_{j+1}}$ contains $B = \mathcal{B}_{g^{a_{j+1}} y_0}$,\\
so it had to contain $V$ when iterated by only $g^{b_j}$}
\end{figure}

We have $y_0 \in E$, so $a_{j+1} \in E(y_0)$ and there exists a vertex $i^{a_{j+1}} \in \overline{\mathcal{T}_{a_{j+1}}}$ such that $y_0 \in \sigma \circ \theta_{i^{a_{j+1}}}\left ( \left [ - \frac{1}{3} ;\frac{1}{3} \right ] \right )$.
Thus, for $t \in [- 1 ; 1]$, items $1)$ and $3)$ from the \hyperref[lem:RL]{Reparametrization Lemma} give
$$
(g^{a_{j+1}} \circ \sigma \circ \theta_{i^{a_{j+1}}})'(t) \geq \frac{2}{3} \frac{\varepsilon}{6}
$$
Therefore, the image of an interval of length $2/3$ under $g^{a_{j+1}} \circ \sigma \circ \theta_{i^{a_{j+1}}}$ has length at least $\frac{2}{3} \times \frac{\varepsilon}{9}$.
Thus $(g^{a_{j+1}} \circ \sigma \circ \theta_{i^{a_{j+1}}})_*$ contains the ball centered at $g^{a_{j+1}} y_0$ and of radius $\frac{2 \varepsilon}{27}$.
Since we chose atoms of $\mathcal{B}$ to have a length less than $\frac{2 \varepsilon}{28}$, we get $g^{a_{j+1} - b_j}(V) \subset \mathcal{B}_{g^{a_{j+1}} y_0} \subset int \left ( g^{a_{j+1}} \circ \sigma \circ \theta_{i^{a_{j+1}}} \right)_*$, where $int$ denotes the interior of a set.
Then, by definition, the atom $V$ is contained in a monotone branch of $g^{a_{j+1} - b_j}$, but $int \left ( g^{b_j} \circ \sigma \circ \theta_{i^{a_{j+1}}} \right)_*$ as well because of Lemma \autoref{lem:repar-monot}.
Since they intersect, these two branches must be the same.
This shows that $V \subset \left ( g^{b_j} \circ \sigma \circ \theta_{i^{a_{j+1}}} \right)_*$.\\

We now show the two items of the Lemma.
From what precedes, we have $s,t \in [-1 ; 1]$ such that $y = g^{b_j} \circ \sigma \circ \theta_{i^{a_{j+1}}}(t)$ and $z = g^{b_j} \circ \sigma \circ \theta_{i^{a_{j+1}}}(s)$.
Therefore, we have
\begin{align*}
|(g^{a_{j+1} - b_j})'(y)| &= |(g^{a_{j+1} - b_j})' \left ( g^{b_j} \circ \sigma \circ \theta_{i^{a_{j+1}}}(t) \right )|\\
&= \frac{|(g^{a_{j+1}} \circ \sigma \circ \theta_{i^{a_{j+1}}})'(t) )|}{|(g^{b_j} \circ \sigma \circ \theta_{i^{a_{j+1}}})'(t)|}\\
&\leq \frac{\frac{3}{2}|(g^{a_{j+1}} \circ \sigma \circ \theta_{i^{a_{j+1}}})'(s) )|}{\frac{2}{3}|(g^{b_j} \circ \sigma \circ \theta_{i^{a_{j+1}}})'(s)|}\\
&= \frac{9}{4} |(g^{a_{j+1} - b_j})'(z)|
\end{align*}
For the second item, we will in fact prove that
$$
B := \mathcal{B}_{g^{a_{j+1}} y_0} = g^{a_{j+1 - b_j}}(V)
$$
We proceed by double inclusion.
By definition of $V$, we have $V = (g^{-(a_{j+1} - b_j)} B) \cap J$ where $J = \mathcal{J}_{g^{b_j}y_0}^{a_{j+1} - b_j}$.
So we are left to show that $B \subset g^{a_{j+1 - b_j}}(V)$.
From the preliminary construction, we know that $int \left ( g^{b_j} \circ \sigma \circ \theta_{i^{a_{j+1}}}\right)_* \subset J$ and $B \subset int \left ( g^{a_{j+1}} \circ \sigma \circ \theta_{i^{a_{j+1}}}\right)_*$.
Therefore, for any $y \in B$, there exists a $t \in ]-1 ; 1[$ such that $y=g^{a_{j+1}} \circ \sigma \circ \theta_{i^{a_{j+1}}}(t)$, hence $y' := g^{b_j} \circ \sigma \circ \theta_{i^{a_{j+1}}}(t)$ satisfies
$$
y = g^{a_{j+1}-b_j}(y') \; \; \; \text{and} \; \; \; y' \in (g^{-(a_{j+1} - b_j)} B) \cap J = V
$$
\end{proof}

We define $\mathcal{V}$, a partition of $R = \mathcal{J}^E_x \cap \mathcal{Q}_{q,x}^{E} \cap E \cap A_n$, as follows
$$
\gls*{V} = \bigvee\limits_{j=0}^{d-1} g^{-b_j}\mathcal{V}_{a_{j+1}}
$$
In the following lemma, we estimate the Lebesgue measure of $R$ over an atom of $\mathcal{V}$ by using the change of variable formula.

\begin{lem}
\label{lem:leb-atome}
For any atom $V = \bigcap\limits_{j = 0}^{d-1} g^{-b_j} V_{a_{j+1}}$ of $\mathcal{V}$, we have
$$
Leb(R \cap V) \leq (C / \varepsilon)^{\# \partial E} e^{- \phi_g^E(x) + \# E  / q} \left ( \prod\limits_{j = 0}^{d-1} Leb(V_{a_{j+1}}) \right )
$$
\end{lem}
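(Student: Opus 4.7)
The plan is to apply the change of variable formula of Lemma~\ref{lem:chgt-var} to the iterate $g^{b_d}$ on the set $R \cap V$. First, I would verify that $R \cap V$ lies, up to a Lebesgue-null set, in a single monotone branch of $g^{b_d}$: for each $i \in E$ the condition $R \subset \mathcal{J}^E_x$ forces $g^i(R \cap V)$ to lie in the fixed atom $\mathcal{J}_{g^i x}$ of $\mathcal{J}$, and for each gap index $i \in [\![ b_j ; a_{j+1} [\![$ the $\mathcal{J}^{a_{j+1} - b_j}$-factor in $\mathcal{V}_{a_{j+1}}$ places $g^i(R \cap V)$ in a fixed atom of $\mathcal{J}$. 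Joining these constraints over all $i \in [\![ 0 ; b_d [\![$ puts $R \cap V$ in a single atom of $\mathcal{J}^{b_d}$, which coincides Lebesgue-a.e.\ with a monotone branch of $g^{b_d}$. Lemma~\ref{lem:chgt-var} applied with $A = R \cap V$, $B = I$, $k = b_d$ then gives $Leb(R \cap V) \leq 1 / \inf_{R \cap V}|(g^{b_d})'|$.

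Next, I would bound the Jacobian from below by splitting the product $|(g^{b_d})'(y)| = \prod_{i=0}^{b_d - 1}|g'(g^i y)|$ into a contribution over $i \in E$ and a contribution over the $d$ gap intervals $[\![ b_j ; a_{j+1} [\![$. For the $E$-indices, the inclusion $R \subset \mathcal{Q}^E_{q,x}$ combined with the $1/q$-oscillation of $\log|g'|$ on atoms of $\mathcal{Q}_q$ (Lemma~\ref{lem:Qq-util-gibbs} applied with $k=1$) produces $\sum_{i \in E}\log|g'(g^i y)| \geq \phi_g^E(x) - \# E / q$. For each gap, the chain rule regroups the factors as $|(g^{a_{j+1} - b_j})'(g^{b_j} y)|$ with $g^{b_j} y \in V_{a_{j+1}}$; combining the distortion bound (i) and the image-size bound (ii) of Lemma~\ref{lem:prop-atomes} with the mean value theorem on the monotone branch containing $V_{a_{j+1}}$ yields a uniform pointwise bound $\inf_{V_{a_{j+1}}}|(g^{a_{j+1} - b_j})'| \geq C' \varepsilon / Leb(V_{a_{j+1}})$ for some universal $C' > 0$.

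Multiplying these lower bounds and inverting produces
\[
Leb(R \cap V) \leq \left( \frac{1}{C' \varepsilon} \right)^{d} e^{-\phi_g^E(x) + \# E / q} \prod_{j=0}^{d-1} Leb(V_{a_{j+1}}).
\]
Since $\# \partial E = 2d$ (each of the $d$ components of $E$ contributes two endpoints to the symmetric difference $E \Delta (E+1)$) and $1/(C'\varepsilon) \geq 1$ for the $\varepsilon$ supplied by the Reparametrization Lemma, we have $(1/(C'\varepsilon))^d \leq (1/(C'\varepsilon))^{\# \partial E}$, giving the claim with $C = 1/C'$.

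The main obstacle is the gap-by-gap Jacobian bound: Lemma~\ref{lem:prop-atomes} provides distortion on $V_{a_{j+1}}$ and a lower bound on the Lebesgue size of its image separately, and these must be combined via a mean value argument on each monotone branch to produce a pointwise lower bound on $|(g^{a_{j+1}-b_j})'|$ proportional to $\varepsilon/Leb(V_{a_{j+1}})$. Once this is in hand, the remaining bookkeeping—matching the $\phi_g^E(x)$ term, the $\# E / q$ error, and the $\# \partial E$ power of $C/\varepsilon$—follows formally.
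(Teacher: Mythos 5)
Your proposal is correct and follows essentially the same route as the paper's proof: change of variable for $g^{b_d}$ on the single monotone branch containing $R \cap V$, the $\mathcal{Q}_q$-oscillation bound producing $\phi_g^E(x) - \#E/q$ on the $E$-blocks, and Lemma \autoref{lem:prop-atomes} giving $\inf_{V_{a_{j+1}}}|(g^{a_{j+1}-b_j})'| \geq C'\varepsilon / Leb(V_{a_{j+1}})$ on each gap. The only differences are cosmetic: you apply Lemma \autoref{lem:Qq-util-gibbs} index by index rather than block by block, use the mean value theorem where the paper reuses the change-of-variable argument, and settle for a larger universal constant via $d = \#\partial E/2$.
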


\begin{proof}
Since $R$ forces a precise monotone branch during times of $E$, and because $V$ also forces one during times of $[\![ 0 ; b_d [\![ \backslash E$, we know that the set $R \cap V$ is contained in a monotone branch of $g^{b_d}$. Therefore Lemma \autoref{lem:chgt-var} gives
$$
Leb(R \cap V) \leq \left ( \inf\limits_{R \cap V} |(g^{b_d})'|\right )^{-1} Leb(g^{b_d}(R \cap V))
$$
Then, for $y \in I$, we have
$$
|(g^{b_d})'(y)| = |(g^{a_1})'(y)| \times |(g^{b_1 - a_1})'(g^{a_1} y)| \times ... \times |(g^{b_d - a_d})'(g^{a_d}y)|
$$
Thus, when $y \in R \cap V$, we have
\begin{align*}
|(g^{b_d})'(y)| \geq \left ( \inf\limits_{V_{a_1}} |(g^{a_1})'| \right ) \times \left ( \inf\limits_{\mathcal{Q}_{q,g^{a_1} x}^{b_1 - a_1}} |(g^{b_1-a_1})'| \right ) \times ... \times \left ( \inf\limits_{\mathcal{Q}_{q, g^{a_d} x}^{b_d - a_d}} |(g^{b_d - a_d})'| \right )
\end{align*}
For the factors in $\mathcal{Q}_q$, if $j \in [\![ 1 ; d ]\!]$ and $z \in \mathcal{Q}_{q, g^{a_j} x}^{b_j - a_j}$, Lemma \autoref{lem:Qq-util-gibbs} implies
$$
\log | (g^{b_j-a_j})'(z)| \geq \log | (g^{b_j-a_j})'(g^{a_j} x)| - \frac{b_j - a_j}{q}
$$
Then, for $j \in [\![ 0 ; d-1 ]\!]$, the set $V_{a_{j+1}}$ is inside a monotone branch of $g^{a_{j+1}-b_j}$, so the argument that we used to prove the change of variable formula (Lemma \autoref{lem:chgt-var}) gives
\begin{align*}
Leb(V_{a_{j+1}}) &\geq \left ( \sup\limits_{V_{a_{j+1}}} |(g^{a_{j+1} - b_j})'| \right )^{-1} Leb(g^{a_{j+1} - b_j}(V_{a_{j+1}}))\\
\underset{\text{Lemma } \autoref{lem:prop-atomes}}&{\geq} \frac{4}{9} \left ( \inf\limits_{V_{a_{j+1}}} |(g^{a_{j+1} - b_j})'| \right )^{-1} \frac{\varepsilon}{27}
\end{align*}
Here, Lemma \autoref{lem:prop-atomes}'s hypothesis is satisfied because we may assume that $g^{-b_j} V_{a_{j+1}}$ intersects $R$.
This gives the following estimate for the Lebesgue measure of $R \cap V$:
\begin{align*}
Leb(R \cap V) \leq e^{- \phi_g^E(x)} e^{\# E / q} \left ( \prod\limits_{j=0}^{d-1} \frac{243}{4 \varepsilon} Leb(V_{a_{j+1}}) \right )
\end{align*}
And because $d = \# \partial E / 2$, we may take $C = 8$.
\end{proof}

\begin{proof}[Proof of Proposition \autoref{prop:Gibbs}]
Recall that we fixed $x \in A_n \cap E$ and let $R := \mathcal{J}^E_x \cap \mathcal{Q}_{q,x}^{E} \cap E \cap A_n$.
we write
\begin{align*}
Leb\left(R\right) &= \sum\limits_{V \in \mathcal{V}} Leb\left(R \cap V \right)\\
&\leq \sum\limits_{V_{a_1} \in \mathcal{V}_{a_1}} \sum\limits_{V_{a_2} \in \mathcal{V}_{a_2}} ... \sum\limits_{V_{a_d} \in \mathcal{V}_{a_d}} Leb\left(R \cap \bigcap\limits_{j=0}^{d-1} g^{-b_j} V_{a_{j+1}}\right)\\
\underset{\text{Lemma } \autoref{lem:leb-atome}}&{\leq} \sum\limits_{V_{a_1} \in \mathcal{V}_{a_1}} \sum\limits_{V_{a_2} \in \mathcal{V}_{a_2}} ... \sum\limits_{V_{a_d} \in \mathcal{V}_{a_d}} (C/ \varepsilon)^{\# \partial E} e^{- \phi_g^E(x) + \# E /q} \prod\limits_{j=0}^{d-1} Leb\left(V_{a_{j+1}}\right)\\
&\leq \left(C/ \varepsilon \right)^{\# \partial E} e^{- \phi_g^E(x) + \# E / q}
\end{align*}
We then integrate over $x$ and get
\begin{align*}
\int_E - \log Leb\left(\mathcal{J}^E_x \cap \mathcal{Q}_{q,x}^E \cap E \cap A_n \right) d Leb_{A_n}(x)
&\geq \int_E - \log \left (  (C/ \varepsilon)^{\# \partial E} e^{- \phi_g^E(x) + \# E / q} \right ) d Leb_{A_n}(x)\\
&= \int_E \phi_g^E(x) d Leb_{A_n}(x) - Leb_{A_n}(E) \left ( \frac{\# E }{q} + \# \partial E \log \left ( \frac{C}{\varepsilon} \right ) \right )
\end{align*}
\end{proof}

\section{Proof of Theorem 1}
\label{sec:demoTh}

\subsection{Absolute continuity}
\label{ssec:SRB}

To get an absolutely continuous measure, we will use the following entropy characterization:

\begin{thm}[Theorem VII.1.1 from \cite{SETE}]
\label{th:form-entropie}
Let $f : I \to I$ be a $\mathcal{C}^r$ map where $r > 1$ and $I$ is the interval $[0;1]$ or the circle $\bbT^1$.
Let $\mu$ be an $f$-invariant hyperbolic borelian probabily satisfying the integrability condition
$$
\log | f' | \in \mathbb{L}^1(I,\mu)
$$
Then $\mu$ is absolutely continuous with respect to the Lebesgue measure on $I$ if and only if it satisfies the entropy formula
$$
h_{\mu}(f) = \int_I \chi_f \: d\mu
$$
\end{thm}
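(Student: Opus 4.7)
The plan is to prove the two implications separately, passing through the natural extension $(\hat I, \hat f, \hat\mu)$ of $(I,f,\mu)$ in order to reduce to an invertible (though non-smooth) dynamical system where the general Pesin/Ledrappier--Young theory applies. Since $\pi : \hat I \to I$ intertwines $\hat f$ and $f$ and $\pi_* \hat\mu = \mu$, we have $h_{\hat\mu}(\hat f) = h_\mu(f)$ and $\int \log|f'\circ\pi|\,d\hat\mu = \int\log|f'|\,d\mu$, and hyperbolicity lifts to $\hat\mu$-a.e.\ positivity of the unique Lyapunov exponent.

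For the easy direction, assume $\mu$ is absolutely continuous with density $\rho$. I would use Rokhlin's formula with a countable generating partition $\xi$ whose atoms are contained in monotone branches of $f$, for instance a refinement of $\mathcal J$. Because $f$ is one-to-one on each atom of $\xi$, the Jacobian of $f$ with respect to $\mu$ on such an atom is $J_\mu f = |f'|\cdot (\rho\circ f)/\rho$, and a standard change-of-variables computation yields $h_\mu(f,\xi) = \int \log J_\mu f\,d\mu$. By $f$-invariance of $\mu$ the density contribution $\int \log(\rho\circ f/\rho)\,d\mu$ vanishes, so $h_\mu(f,\xi) = \int\log|f'|\,d\mu$. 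One then checks that $\xi$ is generating under the hyperbolicity hypothesis (using that $f$ is essentially expanding on the support of $\mu$) so that $h_\mu(f) = h_\mu(f,\xi)$, which gives the entropy formula.

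For the hard direction, assume the entropy formula holds. The Margulis--Ruelle inequality for smooth endomorphisms gives $h_\mu(f) \le \int \chi^+\,d\mu$, and by hyperbolicity together with Birkhoff's theorem applied to $\log|f'|\in \mathbb L^1(\mu)$ this upper bound equals $\int \log|f'|\,d\mu$. The assumption is therefore the equality case of Ruelle's inequality. I would now invoke the Ledrappier--Young characterization of equality: lift to $\hat\mu$, construct a measurable partition $\hat\eta$ of $\hat I$ subordinate to unstable ``manifolds'' (in dimension one these are just open subintervals of $I$ carried by the unstable direction in the natural extension), and show that equality forces the conditional measures $\hat\mu^{\hat\eta}_{\hat x}$ to be absolutely continuous with respect to the Lebesgue reference measure on each unstable leaf. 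Projecting down by $\pi$ and disintegrating $\mu$ accordingly yields absolute continuity of $\mu$ with respect to Lebesgue on $I$.

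The main obstacle is the equality-case step: the implication ``equality in Ruelle $\Rightarrow$ absolutely continuous conditionals on unstable leaves'' is the heart of Ledrappier--Young, and adapting their invertible diffeomorphism argument to the non-invertible one-dimensional endomorphism setting requires the careful machinery of Qian--Xie--Zhu (involving transverse metrics, Lyapunov charts for the natural extension, and the identification of the dimension of $\hat\mu$ along unstable leaves with $h_\mu/\chi$). The one-dimensionality is used crucially at the very end, because there is no stable direction, so absolute continuity of the unstable conditionals is already absolute continuity of $\mu$ itself; in higher dimension one would additionally need that the holonomy between unstable leaves is absolutely continuous and that there exists a transverse direction along which $\mu$ is also nice.
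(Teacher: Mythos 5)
This statement is not proved in the paper at all: it is quoted verbatim as Theorem VII.1.1 of Qian--Xie--Zhu \cite{SETE} and used as a black box, so there is no in-paper argument to compare yours against. Judged on its own terms, your text is a reasonable summary of the standard proof strategy (it is essentially the strategy of the cited book), but it is an outline rather than a proof: the decisive step --- that equality in the Ruelle inequality forces absolute continuity of the conditional measures on unstable leaves of the natural extension, i.e.\ the endomorphism version of Ledrappier--Young --- is exactly the content of the cited theorem, and you explicitly defer it to ``the careful machinery of Qian--Xie--Zhu''. So the hard direction is not established by your argument; it is re-cited.

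Two smaller points in the ``easy'' direction also need attention. First, the cancellation $\int \log\bigl((\rho\circ f)/\rho\bigr)\,d\mu = 0$ is not automatic from invariance, because $\log\rho$ need not be $\mu$-integrable; one needs the standard lemma that a measurable coboundary $\psi\circ f-\psi$ whose negative part (or the function itself) is integrable has zero integral, applied with some care to $\psi=\log\rho$. Second, the claim that your branch partition $\xi$ is generating ``because $f$ is essentially expanding on the support of $\mu$'' is precisely where hyperbolicity and the integrability hypothesis must be used quantitatively (diameters of the atoms of $\bigvee_{i=0}^{n-1}f^{-i}\xi$ must shrink $\mu$-a.e., which requires controlling returns near the critical set); as written this is asserted, not proved. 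None of this is fatal --- it is the known route --- but as a self-contained proof of the stated equivalence the proposal has a genuine gap at the equality-case step and two unproved technical claims in the direct implication.
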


In the previous sections, we did not work directly with $f$ but rather with $g$, an iterate of $f$. Let us first show how to obtain a hyperbolic absolutely continuous invariant borelian probability (
\gls*{HACIP}) for $f$ once we have one for $g$.
Suppose $\mu$ is an HACIP for $g = f^p$, and consider the following $f$-invariant borelian probability
$$
\gls*{Nu} = \frac{1}{p}\sum\limits_{k=0}^{p-1} f^k_* \mu
$$
One can verify that $f$ has a positive Lyapunov exponent $\nu$-almost everywhere and that $\nu$ satisfies the integrability condition.
For the entropy formula, we have
$$
p h_f(\nu) = h_{f^p}(\mu) = \chi_{f^p}(\mu) = p \chi_f(\nu)
$$
This shows that $\nu$ is an HACIP for $f$. In particular, any ergodic component of $\nu$ is an ergodic HACIP for $f$.\\

We now build a measure satisfying the entropy formula and the integrability condition for $g = f^p$, where $p$ and other notations are defined at the beginning of section \hyperref[sec:mes-emp]{\textbf{5.}}, so we get an integer sequence $\mathcal{n}$ and converging sequences of measures $(\mu_n^{M,m})_{n \in \mathcal{n}, M,m \in \bbZ_0^+}$ and $(\nu_n^{M,m})_{n \in \mathcal{n}, M,m \in \bbZ_0^+}$.
We show that the limit $\mu = \lim\limits_{M \to +\infty} \lim\limits_{\mathcal{n} \ni n \to +\infty} \mu_n^{M,m}$ is an HACIP.
Item $ii)$ from Proposition \autoref{prop:CV-mes} implies that the limit $\mu$ is a $g$-invariant borelian probability.
The integrability condition and the positivity of the Lyapunov exponent $\mu$-almost everywhere come from Propositions \autoref{prop:phi-integ} and \autoref{prop:expo-pos}.
Hence, we are only left to show that $\mu$ satisfies the entropy formula.

\begin{prop}
\label{prop:formule-entropie}
The measure $\mu$ satisfies the entropy formula for $g$.
\end{prop}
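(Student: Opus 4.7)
The plan is to reduce the entropy formula $h_\mu(g) = \int \phi_g \, d\mu$ to the single inequality $h_\mu(g) \geq \int \phi_g \, d\mu$: indeed, the Margulis--Ruelle inequality gives $h_\mu(g) \leq \int \chi_g \, d\mu$, and Proposition \autoref{prop:expo-pos} gives $\int \chi_g \, d\mu = \int \phi_g \, d\mu$. For the reverse inequality, I will combine the entropy estimate of Proposition \autoref{prop:entrop-estim} applied to the partition $\mathcal{P}_q$ with the Gibbs inequality of Proposition \autoref{prop:Gibbs}, and then pass successively to the limits $n \to \infty$ along $\mathcal{n}$, $M \to \infty$, $m' \to \infty$, and finally $q \to \infty$.

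After dividing the inequality of Proposition \autoref{prop:entrop-estim} by $n$ and inserting Proposition \autoref{prop:Gibbs}, the prefactor $\frac{1}{n} \int \# E_n^{M,m}(x) \, dLeb_{A_n}(x)$ tends to $\beta_m^M$ by Lemma \autoref{lem:def-An} item $ii)$, and the main term on the right reads $\frac{1}{n} \sum_E \int_E \phi_g^E \, dLeb_{A_n} = \beta^\infty \int \phi_g \, d\nu_n^{M,m}$. The three remaining error terms --- the $\#E/q$ and $\#\partial E$ pieces from Proposition \autoref{prop:Gibbs}, the quantity $-\frac{1}{n} H_{Leb_{A_n}}(\mathcal{E}_n^{M,m}) + \frac{1}{n}\log Leb(A_n)$, and $\frac{m'}{n}\sum_E Leb_{A_n}(E) \, \#\partial E \, \log \# (\mathcal{P}_q)_{\mu^E}$ --- will be controlled via Lemma \autoref{lem:def-An} items $i), iii), iv)$ and Proposition \autoref{prop:partition} item $4)$, so that each either vanishes as $n \to \infty$ or becomes negligible once we also let $M \to \infty$. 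The key point for the $n \to \infty$ step is that Lemma \autoref{lem:tpsHB} item $iii)$ forces the support of each $\mu_n^{M,m}$, hence of $\mu^{M,m}$, into the closed set $\{ \phi_g \geq -M \log \|g'\|_\infty \}$; this makes $(\mathcal{P}_q)_{\mu^{M,m}}$ finite via Lemma \autoref{lem:brch-mon-DM}, and allows me to replace $\phi_g$ on this support by the continuous truncation $\max(\phi_g, -M\log\|g'\|_\infty)$, so that the choice of $a$ made in Section \hyperref[ssec:partition]{\textbf{6.b.}} (zero boundary for $\mu^{M,m}$) combined with weak-$*$ convergence delivers both $H_{\mu_n^{M,m}}(\mathcal{P}_q^{m'}) \to H_{\mu^{M,m}}(\mathcal{P}_q^{m'})$ and $\int \phi_g \, d\nu_n^{M,m} \to \int \phi_g \, d\nu^{M,m}$.

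To let $M \to \infty$, I will invoke Proposition \autoref{prop:CV-mes} item $iii)$, which gives $\nu^{M,m}(B) \nearrow \mu(B)$ for every Borel $B$; monotone convergence applied separately to $\phi_g^+$ and $\phi_g^-$ (the latter $\mu$-integrable by Proposition \autoref{prop:phi-integ}) then yields $\int \phi_g \, d\nu^{M,m} \to \int \phi_g \, d\mu$. For the entropies I use $\mu^{M,m} = (\beta^\infty/\beta_m^M)\nu^{M,m}$ together with $\beta_m^M \to \beta^\infty$ to reduce matters to $H_{\nu^{M,m}}(\mathcal{P}_q^{m'}) \to H_\mu(\mathcal{P}_q^{m'})$; since $H_\mu(\mathcal{P}_q^{m'}) \leq m' H_\mu(\mathcal{P}_q) < \infty$ by Proposition \autoref{prop:partition} item $1)$, and since $-x\log x$ is dominated on $[0,\mu(P)]$ by $-\mu(P)\log\mu(P)$ as soon as $\mu(P) \leq 1/e$, dominated convergence applies. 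Dividing by $\beta^\infty$ gives $\frac{1}{m'} H_\mu(\mathcal{P}_q^{m'}) \geq \int \phi_g \, d\mu - 1/q$, and the final limits $m' \to \infty$ then $q \to \infty$ conclude.

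The main obstacle is the simultaneous presence of an unbounded potential $\phi_g$ and a countably infinite partition $\mathcal{P}_q$, which makes every interchange of limits delicate. Both difficulties are resolved by the geometric times machinery of Section \hyperref[sec:tpsHB]{\textbf{4.}}: the lower bound on $|g'|$ at geometric times (Lemma \autoref{lem:tpsHB}) pushes the support of the empirical measures away from the critical set, so on this support $\phi_g$ is continuous and bounded below and only finitely many atoms of $\mathcal{P}_q$ carry positive mass; the remainder is bookkeeping of the error terms along the nested limits.
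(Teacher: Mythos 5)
Your overall route is the paper's: reduce to $h_{\mu}(g) \geq \int \phi_g \, d\mu$ via Ruelle's inequality and Proposition \autoref{prop:expo-pos}, feed the partition $\mathcal{P}_q$ into Proposition \autoref{prop:entrop-estim} together with the Gibbs inequality of Proposition \autoref{prop:Gibbs}, kill the error terms with Lemma \autoref{lem:def-An}, Lemma \autoref{lem:EnM} and Proposition \autoref{prop:partition}, and pass to the limits in $n$, $M$, then $q$, handling the unbounded potential through the lower bound of Lemma \autoref{lem:tpsHB} (support away from $\mathcal{C}_g$, continuous truncation) and the monotonicity of Proposition \autoref{prop:CV-mes} together with Proposition \autoref{prop:phi-integ}. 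Replacing the paper's monotone-convergence step for $H_{\nu^{M,m}}(\mathcal{P}_q^{m'}) \to H_{\mu}(\mathcal{P}_q^{m'})$ by dominated convergence with dominating sequence $\psi(\mu(P))$ is a cosmetic variation.

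There is, however, one genuine gap: you decouple the power $m'$ of the partition from the trimming parameter $m$ of $E_n^{M,m}$ and let $m' \to \infty$ with $m$ fixed. The justification you offer for $H_{\mu_n^{M,m}}(\mathcal{P}_q^{m'}) \to H_{\mu^{M,m}}(\mathcal{P}_q^{m'})$ --- the supports lie in the closed set $\{ |g'| \geq ||g'||_{\infty}^{-M} \}$, hence $(\mathcal{P}_q)_{\mu^{M,m}}$ is finite --- only constrains the time-zero factor of an atom $P = \bigcap_{j < m'} g^{-j} P_j$. If $i \in E_n^{M,m}(x)$, only $i, \dots, i+m-1$ are guaranteed to lie in $E_n^M(x)$, so for $j \geq m$ nothing prevents $g^{i+j}x$ from coming arbitrarily close to $\mathcal{C}_g$; infinitely many atoms of $\mathcal{P}_q^{m'}$ may then carry mass, uniformly in $n$ there is no finiteness, and atom-wise weak-$*$ convergence with zero boundary only gives, by Fatou, $\liminf_n H_{\mu_n^{M,m}}(\mathcal{P}_q^{m'}) \geq H_{\mu^{M,m}}(\mathcal{P}_q^{m'})$. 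That is the wrong direction: since your chain of inequalities bounds $H_{\mu_n^{M,m}}(\mathcal{P}_q^{m'})$ from below, you need $\limsup_n H_{\mu_n^{M,m}}(\mathcal{P}_q^{m'}) \leq H_{\mu^{M,m}}(\mathcal{P}_q^{m'})$, and for a countably infinite partition entropy can escape into the tail of small atoms in the limit. This is exactly why the parameter $m$ was introduced: the paper takes the partition power equal to $m$, so that Lemma \autoref{lem:tpsHB}.$iii)$ bounds $|g'(g^{i+j}x)|$ below for all $j \leq m-1$, which yields $\# \{ P \in \mathcal{P}_q^m \mid \exists n \in \mathcal{n}, \, \nu_n^{M,m}(P) > 0 \} < +\infty$ and hence genuine convergence of the entropies. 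Your argument is repaired simply by coupling the two parameters (take $m' = m$, or require $m \geq m'$, before taking any limit in $n$); with that modification it coincides with the paper's proof.
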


\begin{proof}
By Ruelle's inequality \cite{Ruelle1978} and Proposition \autoref{prop:expo-pos}, we only have to show that $h_{\mu}(g) \geq \phi_g(\mu)$.
For $p \in \bbZ_0^+$, item $1)$ from Proposition \autoref{prop:partition} gives that the $\mu$-partition $\mathcal{P}_q$ is of finite $\mu$-entropy.
We can thus write
$$
h_{\mu}(g) \geq h_{\mu}(g,\mathcal{P}_q) = \lim\limits_{m \to +\infty} \frac{1}{m} H_{\mu}(\mathcal{P}_q^m)
$$

\hspace{1em}\textbf{Part 1)} Dealing with the error terms:\\

Let $m,q \in \bbZ^+$, $M \in \bbZ_0^+$ and $n \in \mathcal{n}$.
The purpose of the previous sections was to reach an inequality of the form
$$
\frac{1}{m} H_{\mu_n^{M,m}}(\mathcal{P}_q^m) \geq \int \phi_g d \mu_n^{M,m} + \text{ error term}
$$
In this first part, we prove that this error term goes to 0 when $n$ then $M$ then $q$ go to infinity. We will prove in part \textbf{2)} that
$\lim\limits_{M \to +\infty} \lim\limits_{\mathcal{n} \ni n \to +\infty} H_{\mu_n^{M,m}}(\mathcal{P}_q^m) = H_{\mu}(\mathcal{P}_q^m)$ and in part \textbf{3)} that $\lim\limits_{M \to +\infty} \lim\limits_{\mathcal{n} \ni n \to +\infty} \phi_g(\mu_n^{M,m}) = \phi_g(\mu)$.\\

In section \hyperref[ssec:entropie1]{\textbf{6.a.}}, we proved Proposition \autoref{prop:entrop-estim} claiming that
\begin{align*}
\frac{1}{m} \left ( \int \# E_n^{M,m}(x) d Leb_{A_n}(x) \right ) H_{\mu_n^{M,m}}(\mathcal{P}_q^m)
&\geq \sum\limits_{E \in \mathcal{E}_n^{M,m}} \int_E - \log Leb(\mathcal{P}_{q,x}^E \cap E \cap A_n) d Leb_{A_n}(x)\\
&\hspace{3em}+ Leb_{A_n}(E) \log Leb_{A_n}(E)\\
&\hspace{3em}+ Leb_{A_n}(E) \log Leb(A_n)\\
&\hspace{1em}- m \sum\limits_{E \in \mathcal{E}_n^{M,m}} Leb_{A_n}(E) \log ( \# (\mathcal{P}_q)_{\mu^E}) \# \partial E
\end{align*}
To show that these first error terms go to $0$ as $n$ then $M$ go to infinity, we use items $i)$ and $iv)$ from Lemma \autoref{lem:def-An} and item $4)$ from Proposition \autoref{prop:partition}, which imply the following
$$
\limsup\limits_{\mathcal{n} \ni n \to +\infty} -\frac{1}{n}\sum\limits_{E \in \mathcal{E}_n^{M,m}} Leb_{A_n}(E) \log Leb_{A_n}(E) + Leb_{A_n}(E) \log Leb(A_n) - m Leb_{A_n}(E) \# \partial E \log \# ((\mathcal{P}_q)_{\mu^E}) \underset{M \to +\infty}{\longrightarrow} 0
$$
Then, for the main term, the Gibbs inequality (Proposition \autoref{prop:Gibbs}) gave us
\begin{align*}
\sum\limits_{E \in \mathcal{E}_n^{M,m}} \int_E - \log Leb(\mathcal{J}^E_x \cap \mathcal{Q}_{q,x}^E \cap E \cap A_n) d Leb_{A_n}(x)
&\geq \int \phi_g^{E_n^{M,m}(x)}(x) \: d Leb_{A_n}(x) - \frac{1}{q} \int \# E_n^{M,m}(x) d Leb_{A_n}(x)\\
&\hspace{1em}- \log \left ( \frac{C}{\varepsilon} \right ) \int \# \partial E_n^{M,m}(x) d Leb_{A_n}(x)
\end{align*}
To show that these other error terms converge to $0$ as well, we use item $ii)$ from Lemma \autoref{lem:EnM} and let $q$ go to infinity. Therefore, we obtain
$$
\frac{1}{m} H_{\mu_n^{M,m}}(\mathcal{P}_q^m)
\geq \phi_g(\mu_n^{M,m}) - \varepsilon_{n,M,m,q}
$$
where $\lim\limits_{M \to +\infty} \lim\limits_{\mathcal{n} \ni n \to +\infty}\varepsilon_{n,M,m,q} = \varepsilon_q \underset{q \to +\infty}{\longrightarrow} 0$.
Hence, parts \textbf{2)} and \textbf{3)} of the proof will give
$$
\forall q \in \bbZ^+, h_{\mu}(g) \geq \phi_g(\mu) - \varepsilon_q
$$
Hence letting $q \to +\infty$ will give the result.\\

\hspace{1em} \textbf{Part 2)} Convergence of the entropy:\\

We now show that $\lim\limits_{M \to +\infty} \lim\limits_{\mathcal{n} \ni n \to +\infty} H_{\mu_n^{M,m}}(\mathcal{P}_q^{m}) = H_{\mu}(\mathcal{P}_q^m)$.
In fact, we will rather lead the computations for $\nu_n^{M,m}$, which will imply the result for $\mu_n^{M,m}$.\\

We first prove that 
$$
\# \{ P \in \mathcal{P}_q^{m} \mid \exists n \in \mathcal{n}, \nu_n^{M,m}(P) > 0 \} < +\infty
$$
For $P$ in this set, we write $P = \bigcap\limits_{j = 0}^{m-1} g^{-j} P_j$.
Then, there exists an $n \in \mathcal{n}$, $x \in A_n$ and $i \in E_n^{M,m}(x)$ such that for any $j \in [\![ 0 ; m-1 ]\!]$, we have $g^{i+j} x \in P_j$.
By definition of $E_n^{M,m}(x)$, we have $i+j \in E_n^M(x)$, and item $iii)$ from Lemma \autoref{lem:tpsHB} gives
$|g'(g^{i+j} x)| \geq || g' ||_{\infty}^{-M}$ .
Therefore, the same reasoning as for item $2)$ of Proposition \autoref{prop:partition} gives that there are only finitely many such $P_j$, hence finitely many such $P$.
Then, the choice of $a$ done at the beginning of section \hyperref[ssec:partition]{\textbf{6.b.}} gives $\nu^{M,m}(\partial \mathcal{P}_q) = 0$, therefore
$$
H_{\nu_n^{M,m}}(\mathcal{P}_q^{m}) \underset{\mathcal{n} \ni n \to +\infty}{\longrightarrow} H_{\nu^{M,m}}(\mathcal{P}_q^m)
$$
For the limit in $M$, note $\psi : x \mapsto - x \log(x)$, so that
\begin{align*}
H_{\nu^{M,m}}(\mathcal{P}_q^{m})
= \sum\limits_{P \in \mathcal{P}_q^{m}} \psi(\nu^{M,m}(P))
\end{align*}
For $P \in \mathcal{P}_q^m$, item $iii)$ from Proposition \autoref{prop:CV-mes} gives that $\nu^{M,m}(P) \underset{M \to +\infty}{\nearrow} \mu(P)$.
Then notice that $\psi$ is continuous, non-negative and non-decreasing on $[0 ; 1/e]$, and that at most two atoms of $\mathcal{P}_q$ will ever have a $\nu^{M,m}$-measure larger than $1/e$.
Therefore, using monotone convergence, we get
$$
\sum\limits_{P \in \mathcal{P}_q^{m}} \psi(\nu^{M,m}(P)) \underset{M \to +\infty}{\longrightarrow} \sum\limits_{P \in \mathcal{P}_q^{m}} \psi(\mu(P))
= H_{\mu}(\mathcal{P}_q^m)
$$
This gives the result for $\nu_n^{M,m}$. To obtain the same for $\mu_n^{M,m}$, let us note
$$
\gls*{BetanMm} = \int d_n(E_n^{M,m}(x)) \: d Leb_{A_n}(x)
$$
By item $ii)$ from Lemma \autoref{lem:def-An}, this quantity satisfies
$$
\beta_n^{M,m} \underset{\mathcal{n} \ni n \to +\infty}{\longrightarrow} \beta_m^M \underset{M \to +\infty}{\longrightarrow} \beta^{\infty} \; \; \; \text{and} \; \; \; \mu_n^{M,m} = \frac{\beta^{\infty}}{\beta_n^{M,m}} \nu_n^{M,m}
$$
Hence
\begin{align*}
H_{\mu_n^{M,m}}(\mathcal{P}_q^m)
&= \sum\limits_{P \in \mathcal{P}_q^m} - \mu_n^{M,m}(P) \log(\mu_n^{M,m}(P))\\
&= - \log \frac{\beta^{\infty}}{\beta_n^{M,m}} + \sum\limits_{P \in \mathcal{P}_q^m} - \frac{\beta^{\infty}}{\beta_n^{M,m}} \nu_n^{M,m}(P) \log(\nu_n^{M,m}(P))\\
&= - \log \frac{\beta^{\infty}}{\beta_n^{M,m}} + \frac{\beta^{\infty}}{\beta_n^{M,m}} H_{\nu_n^{M,m}}(\mathcal{P}_q^m)
\underset{n,M}{\longrightarrow} H_{\mu}(\mathcal{P}_q^m)
\end{align*}

\hspace{1em}\textbf{Part 3)} Convergence of $\phi_g(\mu_n^{M,m})$\\

We show that $\lim\limits_{M \to +\infty} \lim\limits_{\mathcal{n} \ni n \to +\infty} \phi_g(\mu_n^{M,m}) = \phi_g(\mu)$.
For the limit in $n$, we use the fact that $\phi_g$ is continuous everywhere except on critical points.
However, item $iii)$ from Lemma \autoref{lem:tpsHB} gives
$$
\forall M,m \in \bbZ_0^+, \forall n \in \mathcal{n}, \forall x \in \sigma_*, \forall k \in E_n^{M,m}(x), \phi_g(g^k x) \geq -M \log || g' ||_{\infty}
$$
This shows that the support of $\mu_n^{M,m}$ is at distance at least $\varepsilon_M$ from the set $\mathcal{C}_g$, where $\varepsilon_M>0$ does not depend on $n$ nor $m$.
Therefore, we have
$$
\forall M,m \in \bbZ_0^+, \phi_g(\mu_n^{M,m}) \underset{\mathcal{n} \ni n \to \infty}{\longrightarrow}
\phi_g(\mu^{M,m})
$$
Notice that the same holds for $\nu_n^{M,m}$, so that $\lim\limits_{\mathcal{n} \ni n \to +\infty}\phi_g(\nu_n^{M,m}) = \phi_g(\nu^{M,m})$, for any $M,m \in \bbZ_0^+$.
We now prove that $\left ( \phi_g(\nu^{M,m}) \right )_M$ converges to $\phi_g(\mu)$.
Notice that this is true if $\phi_g$ were to be a characteristic function, because of item $iii)$ from Proposition \autoref{prop:CV-mes}.
By linearity, monotone convergence, and item $iii)$ from Proposition \autoref{prop:CV-mes}, which gives that $\left ( \nu^{M,m} \right )_M$ is non-decreasing, it is true for any non-negative measurable function.
This is therefore true for any function that is $\mu$-integrable and $\nu^{M,m}$-integrable for every $M,m \in \bbZ_0^+$. We thus get the convergence of $\left ( \phi_g(\nu^{M,m})\right )_M$ from Proposition \autoref{prop:phi-integ}. To prove the convergence of $\left ( \phi_g(\mu^{M,m})\right )_M$, we write
\begin{align*}
\phi_g(\mu^{M,m}) &= \lim\limits_{n \to \infty} \phi_g(\mu_n^{M,m})\\
&= \lim\limits_{n \to \infty} \frac{n \beta^{\infty}}{\int \# E_n^{M,m}(x) \: d Leb_{A_n}(x)} \phi_g(\nu_n^{M,m})\\
\text{\hspace{-8em}From Lemma \autoref{lem:def-An}.}ii) \text{ and previous remarks} \; \; \; &\underset{M \to +\infty}{\longrightarrow} \phi_g(\mu)
\end{align*}
\end{proof}

\subsection{Cover by the basins}
\label{ssec:bassins}

\begin{prop}
\label{prop:rec-bassins}
We have the following inclusion Lebesgue-almost everywhere
$$
\left \{ \chi > R(f)/r \right \} \subset \bigcup\limits_{\mu \text{ HACIP ergodic}} \mathcal{B}(\mu) \cap \{ \chi = \chi(\mu) \}
$$
\end{prop}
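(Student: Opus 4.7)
The strategy is to argue by contradiction. Suppose
\[
B := \{\chi > R(f)/r\} \setminus \bigcup_{\mu \text{ ergodic HACIP}} \mathcal{B}(\mu)
\]
has positive Lebesgue measure. Writing $\{\chi > R(f)/r\} = \bigcup_{b \in \mathbb{Q},\, b > R(f)/r} \{\chi > b\}$ and using countable additivity, one finds $b > R(f)/r$ such that $B_b := B \cap \{\chi > b\}$ has positive Lebesgue measure. The set $B_b$ is forward $f$-invariant modulo Lebesgue-null sets: the Lyapunov exponent is $f$-invariant at non-critical points, and every basin $\mathcal{B}(\mu)$ is fully $f$-invariant (as verified directly from the definition of $\mathcal{B}(\mu)$). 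Modifying $B_b$ on a null set we may assume $f(B_b) \subset B_b$.

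Now re-run the entire construction of sections \textbf{5} through \textbf{7.a} with $A$ replaced by $B_b$. Covering $I$ by finitely many short affine reparametrizations (which are automatically $\varepsilon$-bounded for any fixed $\varepsilon$), at least one $\sigma$ satisfies $Leb(\sigma_* \cap B_b) > 0$. The construction then yields a $g$-invariant HACIP $\mu$, and hence an $f$-invariant HACIP $\nu := \frac{1}{p}\sum_{k=0}^{p-1} f^k_* \mu$. Ergodically decomposing $\nu$ gives ergodic components $(\nu_\omega)_\omega$ that are themselves absolutely continuous (the density of $\nu$ disintegrates $Q$-a.e.), hence each is an ergodic HACIP.

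The key claim is that $\mu(B_b) > 0$. Each empirical measure $\mu_n^{M,m}$ is absolutely continuous with density Lebesgue-a.e. supported on $B_b$: it is an average of push-forwards $(g^i)_*(Leb|_{A_n})$ with $A_n \subset B_b$ disjoint from $\mathcal{C}_\infty$ (since $A_n \subset \{\chi > b\}$), and the forward invariance of $B_b$ gives $g^i(A_n) \subset B_b$; hence each push-forward is absolutely continuous by the change of variables formula with density vanishing off $B_b$. Therefore $\mu_n^{M,m}(B_b) = 1$ for all $n, M, m$, and so $\nu_n^{M,m}(B_b) = \beta_n^{M,m}/\beta^\infty \to \beta_m^M/\beta^\infty > 0$. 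Passing to the weak-$*$ limit in $n$, the measure $\nu^{M,m}$ is supported in $\overline{B_b}$ with total mass $\beta_m^M/\beta^\infty$ (Portmanteau applied to open sets of $I \setminus \overline{B_b}$), and Proposition \textbf{6.1} iii) gives $\nu^{M,m}(B_b) \nearrow \mu(B_b)$. Combining this with the absolute continuity of $\mu$ (which prevents it from concentrating on the null portion of $\overline{B_b} \setminus B_b$) yields $\mu(B_b) > 0$.

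Given $\mu(B_b) > 0$ and $f$-invariance of $B_b$ modulo null, $\nu(B_b) > 0$, so some ergodic component $\nu_\omega$ satisfies $\nu_\omega(B_b) > 0$. Birkhoff's theorem applied to $\nu_\omega$ gives $\nu_\omega(\mathcal{B}(\nu_\omega)) = 1$, so $\nu_\omega(\mathcal{B}(\nu_\omega) \cap B_b) = \nu_\omega(B_b) > 0$, and absolute continuity of $\nu_\omega$ forces $Leb(\mathcal{B}(\nu_\omega) \cap B_b) > 0$, contradicting the definition of $B$. The main obstacle is the passage from $\mu_n^{M,m}(B_b) = 1$ to $\mu(B_b) > 0$: weak-$*$ limits do not in general respect Borel-support constraints when the set in question differs substantially from its closure. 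This is resolved by using the monotone convergence statement of Proposition \textbf{6.1} iii) in tandem with the uniform absolute continuity of the densities $\rho_n^{M,m}$ and the invariance of $B_b$, to rule out any concentration on $\overline{B_b} \setminus B_b$.
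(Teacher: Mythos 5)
There is a genuine gap at the step you yourself flag as the main obstacle: the passage from $\mu_n^{M,m}(B_b)=1$ to $\mu(B_b)>0$. Your proposed resolution does not work. First, the absolute continuity of $\mu$ does not ``prevent concentration on the null portion of $\overline{B_b}\setminus B_b$'', because $\overline{B_b}\setminus B_b$ need not be Lebesgue-null at all: $B_b$ is merely a measurable set of positive measure, and its closure can be much larger (even all of $I$, if $B_b$ is dense), so Portmanteau applied to $\overline{B_b}$ gives no information. Indeed, a sequence of absolutely continuous probability measures carried by a Borel set $B_b$ can converge weak-$*$ to an absolutely continuous measure giving zero mass to $B_b$ (take $B_b$ the complement of a fat Cantor set $K$ and limits carried by $K$). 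Second, the ``uniform absolute continuity of the densities $\rho_n^{M,m}$'' that you invoke is nowhere available: in this paper the absolute continuity of $\mu$ is obtained only a posteriori, through the entropy formula and the Ledrappier--Young type characterization (Theorem \autoref{th:form-entropie}), precisely because no quantitative control of the empirical densities is possible. Third, the monotonicity in $M$ of Proposition \autoref{prop:CV-mes}.iii) acts after the limit in $n$, which is where the mass can escape $B_b$; it cannot repair that loss. A secondary issue is that your bad set $B$ discards only $\bigcup_\mu\mathcal{B}(\mu)$, whereas the proposition also requires $\chi(x)=\chi(\mu)$ on the corresponding basin; since $\log|f'|$ is unbounded, membership in $\mathcal{B}(\mu)$ alone does not give $\chi(x)=\chi(\mu)$, so even a corrected version of your argument would prove a strictly weaker statement.

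The paper avoids this trap entirely: it never tries to show that the limit measure charges the bad set. Instead, after building $\mu$ from a subset $A''$ of density points of $A$ with a common set of geometric times $E_0$, it uses that $\mu(\mathcal{B})=1$ and $\mu\ll Leb$ to find a region $G$ where $\mathcal{B}$ has local Lebesgue measure at least $c>0$, and uses auxiliary measures $\zeta_n\le$ the empirical measures (with total mass $\ge\beta$) to produce infinitely many $n\in E_0$ and points $x_n\in A''$ with $g^nx_n\in G$. At such geometric times the interval $D_n(x_n)=g^nH_n(x_n)$ has length at least $\varepsilon/27$ on each side of $g^nx_n$, bounded distortion on $H_n(x_n)$ together with the density-point property forces $Leb(D_n(x_n)\setminus g^nA)\to0$, while $Leb(D_n(x_n)\cap\mathcal{B})\ge c$ and $g^nA\cap\mathcal{B}=\emptyset$; this is the contradiction, obtained purely at the level of Lebesgue measure. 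If you want to salvage your scheme, you would need to replace your key claim by an argument of this geometric kind (or some other dynamical input); as written, the claim $\mu(B_b)>0$ is unsupported.
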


\begin{proof}
Let 
$\mathcal{B} = \bigcup\limits_{\mu \text{ HACIP ergodic}} \mathcal{B}(\mu) \cap \{ \chi = \chi(\mu) \}$.
By contradiction, assume that there is $A \subset \{ \chi > R(f)/r \}$ such that $Leb(A) > 0$ and $A \cap \mathcal{B} = \emptyset$.
We may take a very small subset of $A$, still of positive Lebesgue measure, such that every $x \in A$ has the same set of geometric times $E(x)$, note it $E_0$, and note $\beta > 0$ the lower bound of the upper density of $E_0$.
Let $A'$ be the set of density points of $A$.
Let $x \in A$ and $n \in E_0$, note $\theta_{i^n} \in \overline{\mathcal{T}_n}$ a reparametrization such that $x \in \sigma \circ \theta_{i^n} \left ( \left [ - \frac{1}{3} ; \frac{1}{3} \right ] \right)$, and note
$$
H_n(x) = \left ( \sigma \circ \theta_{i^n} \right )_* \; \; \; \text{and} \; \; \; D_n(x) = g^n H_n(x) = \left ( g^n \circ \sigma \circ \theta_{i^n} \right )_*
$$
Therefore, $D_n(x)$ is an interval of length at least $\frac{\varepsilon}{27}$ on each side of $g^n(x)$.
We also have
$$
\forall x \in A', \frac{Leb(H_n(x) \cap A)}{Leb(H_n(x))} \underset{E_0 \ni n \to +\infty}{\longrightarrow} 1
$$
Let $A''$ be a subset of $A'$, still of positive Lebesgue measure, such that this convergence is uniform on $A''$.
We now build an HACIP $\mu$ as in the previous sections but starting from $A''$. We keep the same notations as in section \hyperref[sec:mes-emp]{\textbf{5.}}, that is $g=f^p$ and $\nu = \frac{1}{p} \sum\limits_{k=0}^{p-1} f^k_* \mu$ is an HACIP for $f$.
Thus every ergodic component $\nu_{erg}$ of $\nu$ satisfies $\nu_{erg}(\mathcal{B}) = 1$, so $\nu(\mathcal{B})=1$, and $\mu(\mathcal{B}) = 1$.
Then we notice that
$$
\{ x \in I \mid \mu(]x - \frac{\varepsilon}{54} ; x + \frac{\varepsilon}{54}[) = 0 \} \subset I \backslash \text{supp } \mu
$$
So by using $\mu \ll Leb$, we get
$$
\mu \left ( \{ x \in I \mid Leb(]x - \frac{\varepsilon}{54} ; x + \frac{\varepsilon}{54}[ \cap \mathcal{B}) = 0 \} \right ) = 0
$$
Therefore, there exists $c > 0$ such that
$$
\mu \left ( \{ x \in I \mid Leb(]x - \frac{\varepsilon}{54} ; x + \frac{\varepsilon}{54}[ \cap \mathcal{B}) > c \} \right ) > 1 - \beta
$$
Let $G$ be the $\frac{\varepsilon}{54}$-open neighborhood of the set $\{ x \in I \mid Leb(]x - \frac{\varepsilon}{54} ; x + \frac{\varepsilon}{54}[ \cap \mathcal{B}) > c \}$.
For $n \in \mathcal{n}$, note
$$
\zeta_n = \int \frac{1}{n} \sum\limits_{k \in E_0 \cap [\![ 0 ; n [\![} \delta_{g^k x} d Leb_{A_n}(x)
$$
Let $\zeta$ be an accumulation point of $(\zeta_n)_{n \in \mathcal{n}}$ for the weak-$*$ topology.
Since $\zeta_n(I) \geq \beta$, we have $\zeta(I) \geq \beta$.
Also notice that $\zeta \leq \mu$, hence
\begin{align*}
\zeta(G) &= \mu(G) + \zeta(G) - \mu(G)\\
&\geq \mu(G) + \zeta(I) - \mu(I)\\
&> 1- \beta + \beta - 1 = 0
\end{align*}
Thus $0 < \zeta(G) \leq \liminf\limits_{\mathcal{n} \ni n \to +\infty} \zeta_n(G)$, and there exist infinitely many $n$ and $x_n \in A''$ such that
$$
g^n x_n \in G \; \; \;\text{and} \; \; \; n \in E_0
$$
Notice that for any $n$, we have $\mathcal{B} \cap g^n A \subset g^n(A \cap \mathcal{B}) = \emptyset$.
Then recall that $D_n(x_n)$ is an interval of length at least $\frac{\varepsilon}{27}$ on each side of $g^n x_n$, for infinitely many $n$ inside $E_0$.
This implies that
\begin{align*}
0 &= Leb(\mathcal{B} \cap g^n A )\\
&\geq Leb(D_n(x_n) \cap \mathcal{B} \cap g^n A)\\
&= Leb(D_n(x_n) \cap \mathcal{B}) - Leb(D_n(x_n) \cap \mathcal{B} \backslash g^n A)\\
&\geq c - Leb(D_n(x_n) \backslash g^n A)
\end{align*}
Our goal is to show that $Leb(D_n(x_n) \backslash g^n A) \to 0$.
We use the fact that $g^n$ has bounded distortion on $H_n(x_n)$, which gives
\begin{align*}
Leb(D_n(x_n) \backslash g^n A) &= Leb(D_n(x_n)) \frac{Leb(D_n(x_n)  \backslash g^n A)}{Leb(D_n(x_n))}\\
&\leq 2 \varepsilon \times \frac{9}{4} \frac{Leb(H_n(x_n) \backslash A)}{Leb(H_n(x_n))}
\end{align*}
We conclude using uniform convergence on $A''$ and the fact that $x_n \in A''$.
\end{proof}

We now prove the finiteness of ergodic absolutely continuous measures whose Lyapunov exponent is larger than $\frac{R(f)}{r} + \delta$, for any $\delta > 0$.

\begin{prop}
\label{prop:fini}
For $\delta > 0$, there are finitely many ergodic HACIP whose basins intersect $\left \{ \chi > \frac{R(f)}{r} + \delta\right\}$ with positive Lebesgue measure.
\end{prop}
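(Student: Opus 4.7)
The strategy is to show that each ergodic HACIP $\mu_i$ satisfying the hypothesis has a basin of Lebesgue measure bounded below by a uniform constant $c(\delta, f) > 0$; since the basins of distinct ergodic measures are pairwise Lebesgue-disjoint (any common point would force $\mu_i = \mu_j$ as weak-$*$ limits of the same sequence of empirical measures), this gives $\sum_i Leb(\mathcal{B}(\mu_i)) \leq Leb(I)$ and hence finitely many such measures. First I would translate the hypothesis into a statement on $\chi(\mu_i)$: since $\mu_i$ is an HACIP, $\log|f'|$ is $\mu_i$-integrable (the integrability condition in Theorem \autoref{th:form-entropie}), so Birkhoff gives $\chi(x) = \chi(\mu_i)$ for Lebesgue-a.e. $x \in \mathcal{B}(\mu_i)$ and the assumption forces $\chi(\mu_i) > R(f)/r + \delta$. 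Then fix $b := R(f)/r + \delta/2$, apply Proposition \autoref{prop:dens-pos} to obtain uniform constants $p_0, \beta, \varepsilon > 0$ depending only on $\delta$ and $f$, fix $p \geq p_0$, and set $g = f^p$. For each such $\mu_i$, combining Proposition \autoref{prop:dens-pos} with Borel-Cantelli, the Lebesgue density theorem, and countability of $\mathcal{C}_\infty$, pick $x_i$ that is a Lebesgue density point of $\mathcal{B}(\mu_i) \cap \{\chi > R(f)/r + \delta\}$, lies in the image of some small affine $\varepsilon$-bounded reparametrization $\sigma_i$, avoids $\mathcal{C}_\infty$, and has infinitely many geometric times $E_p(x_i)$ with respect to $\sigma_i$.

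The core step is to extract a definite-size piece of $\mathcal{B}(\mu_i)$. For any geometric time $n \in E_p(x_i)$, the \hyperref[lem:RL]{Reparametrization Lemma} supplies $i^n \in \overline{\mathcal{T}_n}$ with $x_i \in \sigma_i \circ \theta_{i^n}([-1/3, 1/3])$; setting $H_n := \sigma_i \circ \theta_{i^n}([-1,1])$ and $D_n := g^n(H_n)$, we have $Leb(D_n) \geq 2\varepsilon/27$, exactly as in the proof of Proposition \autoref{prop:rec-bassins}. Lemma \autoref{lem:distortion} applied to $\sigma_i \circ \theta_{i^n}$ shows that $x_i$ has bounded eccentricity inside $H_n$ (it lies in a definite fraction of the length of $H_n$), so by the Lebesgue density theorem
$$
\frac{Leb(H_n \cap \mathcal{B}(\mu_i))}{Leb(H_n)} \underset{n \to +\infty}{\longrightarrow} 1.
$$
Applying Lemma \autoref{lem:distortion} a second time to the $(n, \varepsilon)$-bounded reparametrization $g^n \circ \sigma_i \circ \theta_{i^n}$ shows $g^n$ has distortion at most $3/2$ on $H_n$, which transfers this density to the image:
$$
\frac{Leb(g^n(H_n \cap \mathcal{B}(\mu_i)))}{Leb(D_n)} \geq \frac{2}{3} \cdot \frac{Leb(H_n \cap \mathcal{B}(\mu_i))}{Leb(H_n)} \geq \frac{3}{5}
$$
for $n$ large enough. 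Since $\mathcal{B}(\mu_i)$ is forward $g$-invariant (the $f$-time averages at $x$ and at $g(x) = f^p x$ differ by $O(p/n)$), $g^n(H_n \cap \mathcal{B}(\mu_i)) \subset \mathcal{B}(\mu_i)$, and thus $Leb(\mathcal{B}(\mu_i)) \geq (3/5) Leb(D_n) \geq 2\varepsilon/45$.

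The main technical subtlety lies in chaining the two distortion estimates — one to transfer the Lebesgue density property from balls around $x_i$ to the reparametrization interval $H_n$, the other to transfer it further to the image $D_n$ — but both are direct applications of the same Lemma \autoref{lem:distortion} already used throughout the paper, so the proof is essentially a repackaging of the geometric construction underlying Proposition \autoref{prop:rec-bassins}. The constant $\varepsilon = \varepsilon_p$ depends only on $\delta$ and $f$, so the resulting bound $|\{\mu_i\}| \leq 45 Leb(I)/(2\varepsilon)$ is finite.
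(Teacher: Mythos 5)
Your proposal is correct in substance, but it takes a genuinely more direct route than the paper. The paper proves Proposition \autoref{prop:fini} by contradiction: it assumes a positive-measure subset $A$ of $\{\chi > R(f)/r+\delta\}$ avoids every basin of Lebesgue measure $\geq 8\varepsilon/243$, re-runs the whole construction of sections \hyperref[sec:mes-emp]{\textbf{5.}}--\hyperref[sec:demoTh]{\textbf{7.}} starting from a subset $A''$ of $A$ to produce a new HACIP $\nu$, shows every ergodic component of $\nu$ has a basin of measure $\geq 8\varepsilon/243$ (via exactly the density-point-plus-bounded-distortion computation at geometric times that you use), and then reaches a contradiction by the $\zeta$/$G$ argument of Proposition \autoref{prop:rec-bassins}. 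You instead apply that same geometric estimate directly to each given measure $\mu_i$: since $\mathcal{B}(\mu_i)\cap\{\chi>R(f)/r+\delta\}$ has positive Lebesgue measure, Proposition \autoref{prop:dens-pos} plus Borel--Cantelli supplies a density point $x_i$ of the basin with infinitely many geometric times inside some affine $\varepsilon$-bounded $\sigma_i$, and pushing Lebesgue density from $H_n$ to $D_n$ together with forward invariance and pairwise disjointness of basins gives the uniform lower bound and finiteness. This bypasses the construction of any new measure and the contradiction scaffolding, which is a real simplification for this particular statement (the paper's heavier route is what it needs elsewhere, e.g. to produce measures on sets not yet known to lie in any basin, as in Proposition \autoref{prop:rec-bassins}); what it costs is nothing for the statement as written.

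Three small repairs. First, the distortion of $g^n$ on $H_n$ is bounded by $9/4$, not $3/2$: you must apply Lemma \autoref{lem:distortion} to both $g^n\circ\sigma_i\circ\theta_{i^n}$ and $\sigma_i\circ\theta_{i^n}$ and take the quotient, so the density-transfer factor is $4/9$ (this is the paper's constant, whence its bound $8\varepsilon/243$); your conclusion survives with the adjusted constant. Second, $\mathcal{C}_\infty$ need not be countable for a general $\mathcal{C}^r$ map (the critical set can be uncountable); but the point is moot, since any $x\in\mathcal{C}_\infty$ has $\chi(x)=-\infty$, so $\{\chi>R(f)/r+\delta\}$ avoids $\mathcal{C}_\infty$ automatically. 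Third, the opening claim that Birkhoff yields $\chi(x)=\chi(\mu_i)$ for Lebesgue-a.e.\ $x\in\mathcal{B}(\mu_i)$ is not justified as stated (weak-$*$ convergence of the empirical measures does not control averages of the unbounded function $\log|f'|$, and Birkhoff only gives this $\mu_i$-a.e.); fortunately you never use it, since the geometric-times argument runs off the pointwise hypothesis $\chi(x_i)>R(f)/r+\delta>b$.
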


\begin{proof}
We prove that any such measure must have the Lebesgue measure of its basin bounded from below by a constant depending only on $\delta, r$ and $f$.
Let $b = \frac{R(f)}{r} + \delta$ and apply Proposition \autoref{prop:dens-pos} to this $b$, which gives $p,  \beta$ and $\varepsilon$ that depend only on $\delta, r$ and $f$.
%
%
Denote by $\mathcal{B}_{bounded} = \bigcup\limits_{
\substack{
\mu \text{ HACIP ergodic s.t.}\\
Leb(\mathcal{B}(\mu)) \geq 8 \varepsilon / 243
}} \mathcal{B}(\mu)$.
By contradiction, assume that there exists $A \subset \left \{ \chi > \frac{R(f)}{r} + \delta\right\}$ of positive Lebesgue measure such that $A \cap \mathcal{B}_{bounded} = \emptyset$.
We define $E_0, A', A''$ as in the proof of Proposition \autoref{prop:rec-bassins}, and let $\mu$ be a $g$-invariant measure obtained by using the construction of the previous sections but starting from $A''$, then let $\nu = \frac{1}{p}\sum\limits_{k=0}^{p-1} f^k_* \mu$.
We prove that any ergodic component of $\nu$ must have its basin of Lebesgue measure larger than ${8 \varepsilon}/{243}$.
This is enough to conclude, since it gives $\nu(\mathcal{B}_{bounded}) = 1$, then $\mu(\mathcal{B}_{bounded}) = 1$, and the rest of proof of Proposition \autoref{prop:rec-bassins} gives, for any $n \in E_0$, that 
$$
0 = Leb(\mathcal{B}_{bounded} \cap g^n A) \geq c + \underset{E_0 \ni n \to +\infty}{o(1)}
$$
Let $\nu_{erg}$ be an ergodic component of $\nu$.
Then let $x$ be a density point of $\mathcal{B}(\nu_{erg})$.
Thus, for any $n \in E_0$, the fact that $g^n$ has bounded distorsion on $H_n(x)$ gives
\begin{align*}
Leb(\mathcal{B}(\nu_{erg})) &\geq Leb\left ( \mathcal{B}(\nu_{erg}) \cap D_n(x) \right )\\
&= Leb ( D_n(x)) \frac{Leb\left ( \mathcal{B}(\nu_{erg}) \cap D_n(x) \right )}{Leb ( D_n(x))}\\
&\geq Leb ( D_n(x)) \frac{4}{9} \frac{Leb \left ( \mathcal{B}(\nu_{erg}) \cap H_n(x) \right )}{Leb(H_n(x))}\\
&\geq \frac{2 \varepsilon}{27} \times \frac{4}{9} \left ( 1 + \underset{E_0 \ni n\to +\infty}{o(1)} \right )
\end{align*}
Hence, letting $n \in E_0$ go to infinity concludes.
\end{proof}

We then explain how to obtain the bound stated in Proposition \autoref{prop:title}.

\begin{proof}[Proof of Proposition \autoref{prop:title}]
By applying Proposition  \autoref{prop:dens-pos} to $b = \frac{\log || f' ||_{\infty}}{r} + \delta \geq \frac{R(f)}{r} + \delta$, we obtain an $\varepsilon > 0$ and a $p$ large enough such that the set of geometric times $E_p(x)$ has positive density for Lebesgue-almost every $x$.
Then, in Proposition \autoref{prop:fini}, we showed that basins to consider are of Lebesgue measure larger than $\frac{8 \varepsilon}{243}$.
Therefore, to estimate the number of these basins, we estimate $\varepsilon$.
From the proof of the \hyperref[lem:RL]{Reparametrization Lemma}, we can take $\varepsilon$ such that $(2 \varepsilon)^{\min(2,r) - 1} < \frac{1}{2 || (f^p)' ||_{r-1}}$.
Thus, we first estimate $p$.
From the proof of Proposition \autoref{prop:dens-pos}, it suffices to choose $p$ such that
$$
-b + \frac{\log(\log || (f^p)' ||_{\infty} + 1 - pb+1)+1}{p} + \frac{\log(2 C_r p \log || f' ||_{\infty})}{p} + \frac{\log || (f^p)' ||_{\infty} + 1 - pb}{p(r-1)} < 0
$$
One can check that this is obtained by taking $p$ such that
$$
\frac{2\log(p B_r \log || f' ||_{\infty})}{p} < \delta
$$
where $B_r$ is a constant depending on $r$.
Thus, we can take $p = \frac{4}{\delta} \log \frac{2 B_r \log || f' ||_{\infty}}{\delta}$.
We now estimate $|| (f^p)' ||_{r-1}$.
From Faà di Bruno's formula \cite{Encinas2003ASP}, we have that
$
|| (f^p)' ||_{r-1} \leq A^r || (f^{p-1})' ||_{r-1} || f' ||_{r-1}^r
$,
where $A$ is a universal constant.
Therefore,
$$
|| (f^p)' ||_{r-1} \leq A^{pr} || f' ||_{r-1}^{pr}
$$
In the end, we obtain that the number of basins is bounded by
\begin{align*}
\frac{243}{8 \varepsilon} &\leq C \left (2 || (f^p)' ||_{r-1} \right )^{\frac{1}{\min(2,r)-1}}\\
&\leq C \left ( A^{pr} || f' ||_{r-1}^{pr} \right )^{\frac{1}{\min(2,r)-1}}\\
&\leq (A || f' ||_{r-1})^{\frac{r}{\min(2,r)-1}\times \frac{4}{\delta} \log \frac{2 B_r \log || f' ||_{\infty}}{\delta}}\\
&= \exp \left ( \frac{r}{\min(2,r)-1}\times \frac{4}{\delta} \log \frac{2 B_r \log || f' ||_{\infty}}{\delta} \times \log (A || f' ||_{r-1}) \right )\\
&\leq \left (\frac{\log || f' ||_{\infty}}{\delta} \right )^{ (C_r \log || f' ||_{r-1} ) / \delta }
\end{align*}
\end{proof}

\begin{rmq}
It is possible to estimate the constant of the \hyperref[lem:RL]{Reparametrization Lemma} and obtain that it is smaller than $C r^{2r}$, where $C$ is a universal constant. This implies that the constant $C_r$ of Proposition \autoref{prop:title} is smaller that $C r \sqrt{r} \log(r)$. Then, we can show that if $f$ is $\mathcal{C}^{\infty}$, then there exists a constant $C$ depending on $|| f' ||_{\infty}$ such that, for any $\delta > 0$, the number of hyperbolic ergodic $f$-invariant absolutely continuous measures whose basin intersects $\{ \chi > \delta \}$ with positive Lebesgue measure is less than
$
\left ( || f' ||_{C/\delta} \right )^{C / \delta^3}
$.
\end{rmq}

\begin{rmq}
\label{rmq:analytic}
If $f$ is analytic, we have the following fact:
$
\exists \varepsilon > 0, \forall k \in \bbZ^+, || d^k f||_{\infty} \leq \left ( \frac{k}{\varepsilon} \right )^k
$.
Consequently, there exists a constant $C$ depending only on $|| f' ||_{\infty}$ such that, for $\delta < \varepsilon$, the number of hyperbolic ergodic $f$-invariant absolutely continuous measures whose basin intersects $\{ \chi > \delta \}$ with positive Lebesgue measure is less than
$
C^{1 / \delta^4}
$. 
\end{rmq}

\begin{rmq}
\label{rmq:trans}
We now explain the transitive case. Suppose that $\mu$ is an ergodic HACIP and note $\rho$ its probability density with respect to the Lebesgue measure.
By using Lemma VII.9.1 from \cite{SETE}, we obtain that there exists an open set where $\rho$ is positive.
If we do the same for another ergodic HACIP $\nu$, then we get an open set $V$ where the probability density of $\nu$ is positive.
By applying transitivity to $U$ and $V$, one can show that $Leb(\mathcal{B}(\nu) \cap U) > 0$, which implies that $\mu(\mathcal{B}(\nu)) > 0$, so $\mu=\nu$.
When $\mu$ and $\nu$ are not supposed ergodic, then the previous case shows that all the ergodic components of $\mu$ and $\nu$ are equal, so $\mu$ and $\nu$ are in fact ergodic and $\mu=\nu$.
\end{rmq}

%
%
%
%
%

\vspace{2em}
\phantomsection
\addcontentsline{toc}{section}{References}


%

\vspace{2em}

\phantomsection
\addcontentsline{toc}{section}{Index of notations}

\section*{Index of notations}\vspace{-4em}

\setglossarystyle{mymcolalttree}

\printglossary[title={\;}]

\end{document}